\newcommand{\proj}{\mathbb{P}}
\newcommand{\seq}{\subseteq}
\newcommand{\C}{\mathbb{C}}
\newcommand{\R}{\mathbb{R}}
\newtheorem{thm}{Theorem}[section]
\newtheorem{mydef}[thm]{Definition}
\newtheorem{lem}[thm]{Lemma}
\newtheorem{cor}[thm]{Corollary}
\newtheorem{conjecture}[thm]{Conjecture}
\newtheorem{prop}[thm]{Proposition}
\theoremstyle{remark}
\newtheorem*{rem}{Remark}
\newtheorem{eg}{Example}
\title{The Universal Severi Variety of Rational Curves on K3 Surfaces}
\author{Michael Kemeny}
\begin{document}
\maketitle
\tableofcontents
\chapter*{Introduction}
In this paper we study the universal Severi variety of irreducible curves of a fixed geometric genus $h$ and in a fixed linear system on 
a primitively polarized K3 surface of genus $g$, with $g>2$ assumed throughout. This work
is an edited version of my Master's Thesis ``Rational Curves on K3 Surfaces'' at the Universit\"{a}t Bonn, Germany.

Let $S$ be a projective surface over $\C$ and let $M \in Pic(S)$ be 
an effective line bundle. Then let $V_h(S,M)$ be the fine moduli space parametrising irreducible, nodal curves 
of geometric genus $h$ in the linear system $|M|$. We call $V_h(S,M)$ the \emph{Severi variety of genus $h$ curves in $S$}.

The spaces $V_h(S,M)$ are classical objects, which were first studied by Severi in the case $S=\proj^2$.
There are two natural questions which immediately arise. Firstly, one would like to know whether the space $V_h(S,M)$
is smooth. Secondly, one may ask whether the space $V_h(S,M)$ is irreducible. 

The first question is in general easier than the second. For example the space $V_h(S,M)$ is known to be smooth
if $S$ is either the plane $\proj^2$, a del Pezzo
surface, a K3 surface or an Enriques surface. There are, however,
examples of surfaces $S \seq \proj^3$ where $V_h(S,M)$ is \emph{not} smooth, see \cite{chian}. 

The second question is harder. The case $S=\proj^2$ was first stated by Severi, who famously gave an
incorrect proof that $V_h(\proj^2, M)$ was irreducible. A correct proof was given by Harris \cite{harrissev}.
The case where $S$ is a del Pezzo surface and $h=0$ was dealt with by Testa in his doctoral thesis \cite{testa},
where it is shown that $V_0(S, M)$ is empty or irreducible so long as the degree of the del Pezzo surface is not
one. In case the degree is one, the result holds if the del Pezzo surface is generic and $M$ is not the anticanonical bundle. 

When $S$ is a generic K3 surface with Picard group $Pic(S) \simeq \mathbb{Z}$ and $M \in Pic(S)$ effective and primitive, 
then the space $V_0(S, M)$ consists of finitely many points, the number of which
is given by the Yau--Zaslow formula \cite{beau-97}. In particular, the Severi variety of rational curves in $S$ is \emph{not} irreducible. On the other hand,
if $h \neq 0$, then little is known about the irreducibility of $V_h(S, M)$ with $h \neq 0$ and
$S$ a generic K3 surface. If the geometric genus $h$ of the curves is sufficiently high with respect to the genus $g$ of the K3 surface, then
the irreducibility of the Severi variety can be shown. In Appendix \ref{sect:hilb} we will use the Hilbert scheme of points and the notion of a $k$-very ample
line bundle to prove that $V_h(S, M)$ is irreducible for $\frac{5g-1}{6} \leq h \leq g$ (with $S$ a generic K3 surface and $M$ primitive). 

Instead of working with one fixed K3 surface, we may consider the moduli stack $\mathcal{B}_g$ of primitively polarized
K3 surfaces of genus $g$, with $g>2$. So then $\mathcal{B}_g$ parametrises all pairs $(S,M)$ with $S$ a K3 surface and $M \in Pic(S)$
an ample and primitive line bundle such that $(M \cdot M)= 2g-2$. Let $\mathcal{U} \seq \mathcal{B}_g$ denote the open substack of polarized
K3 surfaces $(S,M)$ such that $V_h(S,M)$ nonempty.
Denote by $\mathcal{V}^g_{h,k}$ the stack over $\mathcal{U}$ with fibre over $(S,M)$
given by $V_h(S,kM)$. We call this space the universal Severi variety of genus $h$ curves in the linear system $|kM|$ on primitively polarized K3 surfaces $(S,M)$ of 
genus $g$.

The space $\mathcal{V}^g_h(S,kM)$ is studied in \cite{dedieu}, where it appears in relation to the study of self-rational
maps of K3 surfaces. Once again, we may ask whether or not this space is smooth and/or irreducible.
The first question is answered in the positive in \cite{flam}. Regarding the second question, the 
following conjecture has been made:
\begin{conjecture} \label{conj}
 The universal Severi variety $\mathcal{V}^g_{h,k}$ is irreducible.
\end{conjecture}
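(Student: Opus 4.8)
The plan is to reduce Conjecture~\ref{conj} to a transitivity statement for a monodromy group and then to control that monodromy by degenerating the K3 surface to one carrying an elliptic pencil. \emph{Reduction to monodromy.} By \cite{flam} the stack $\mathcal{V}^g_{h,k}$ is smooth, so it is irreducible if and only if it is connected. Let $\pi\colon\mathcal{V}^g_{h,k}\to\mathcal{U}$ be the forgetful morphism; since $\mathcal{B}_g$ is irreducible and $\mathcal{U}\subseteq\mathcal{B}_g$ is a nonempty open substack, $\mathcal{U}$ is irreducible, and it suffices to prove that (i) every irreducible component of $\mathcal{V}^g_{h,k}$ dominates $\mathcal{U}$ and (ii) the monodromy acts transitively on a general fibre. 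Claim (i) follows from the fact---which is precisely the deformation-theoretic input behind the smoothness in \cite{flam}---that the Severi variety of genus-$h$ curves in $|kM|$ on a K3 surface is everywhere of its expected dimension $h$; hence $\mathcal{V}^g_{h,k}$ is equidimensional of dimension $19+h$, and a component failing to dominate $\mathcal{U}$ would have fibres over its image of dimension exceeding $h$. Granting (i), over a dense open substack $\mathcal{U}'\subseteq\mathcal{U}$ the morphism $\pi$ becomes a fibre bundle (finite \'etale when $h=0$); for $(S,M)\in\mathcal{U}'$ very general one has $\mathrm{Pic}(S)=\mathbb{Z}M$, the fibre $V_h(S,kM)$ has, say, $N$ irreducible components (for $h=0$, $N$ reduced points), and the components of $\mathcal{V}^g_{h,k}$ correspond to the orbits of the monodromy representation $\rho\colon\pi_1(\mathcal{U}')\to\mathrm{Sym}_N$. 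One must show $\rho$ is transitive.

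\emph{A degeneration to an elliptic K3 surface.} To analyse $\rho$ I would choose a one-parameter family $(\mathcal{S},\mathcal{M})\to\Delta$ in $\mathcal{B}_g$ whose generic fibre has Picard rank one and whose central fibre $(S_0,M_0)$ is an elliptic K3 surface $S_0\to\proj^1$ with a section $E$, exactly $24$ irreducible nodal fibres, and $\mathrm{Pic}(S_0)=\mathbb{Z}E\oplus\mathbb{Z}F$ where $F$ is the fibre class; one takes $M_0=E+gF$, which satisfies $M_0^2=2g-2$ and is ample and primitive for $g>2$, so $(S_0,M_0)\in\mathcal{B}_g$, and such families exist because the corresponding Noether--Lefschetz locus is a divisor. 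A member of $|kM_0|$ is a $k$-section of the pencil; using the description of rational (and more generally genus-$h$) curves on elliptic K3 surfaces in the spirit of Bryan--Leung and of \cite{dedieu}, such a curve degenerates inside $S_0$ to a union of a fixed multisection with a configuration of fibre components, the prescribed geometric genus being accounted for by smooth fibres together with the normalisations of the $24$ nodal fibres. This produces an explicit finite combinatorial model for the limit curves and their gluing data.

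\emph{Propagating connectedness.} The point to extract from the model is that the local branches of $\mathcal{V}^g_{h,k}$ through a maximally degenerate curve in $|kM_0|$ are permuted transitively by monodromy: transporting the $24$ nodal fibres around $\proj^1$ realises their full symmetric group, and further degenerations of the fibration (colliding singular fibres) identify the combinatorial types that remain separate. One then runs a Harris-style ascending induction on the number of nodes---each component of the $\delta$-nodal Severi stratum acquires a component of the $(\delta+1)$-nodal stratum in its closure, and the node-monodromy just established forces these to agree---to conclude that all $N$ components of $V_h(S,kM)$ lie in a single $\rho$-orbit, hence that $\mathcal{V}^g_{h,k}$ is irreducible. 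For geometric genus $h$ in the high range treated in the introduction one can alternatively argue fibrewise over $\mathcal{B}_g$ with the Hilbert scheme of points: the relative $S^{[\delta]}$ is irreducible because $\mathcal{B}_g$ and its fibres are, and $k$-very ampleness realises $\mathcal{V}^g_{h,k}$ as an open subscheme of a Grassmannian bundle over it.

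\emph{Main obstacle.} The crux is the transitivity extracted in the previous step: one must show that the monodromy produced by the elliptic degeneration is genuinely large---that the maximally degenerate curves in $|kM_0|$ all smooth out within a single family and that their smoothings are interchanged. The combinatorics of rational $k$-sections of an elliptic K3 surface is considerably more involved than the case $k=1$ (essentially the partition combinatorics underlying Yau--Zaslow), and handling it uniformly in $k$ is where the real difficulty lies, and is presumably why the statement remains conjectural; a secondary point is establishing the equidimensionality and nonemptiness needed for (i) in the stacky setting.
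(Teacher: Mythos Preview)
The statement is a conjecture, and the paper does \emph{not} prove it in general. What the paper establishes are partial results: (a) for $h=0$, $k=1$, the \emph{closure} $\overline{\mathcal{V}}^g_{0,1}$ is connected (Theorem~\ref{mainthm}); (b) for $g=3$ and $k\le 2$, full irreducibility via Testa's result on Severi varieties of del Pezzo surfaces; (c) for $h\ge\frac{k(6k-1)(g-1)+4}{6}$, irreducibility via the Hilbert scheme of points and $k$-very ampleness (Appendix~\ref{sect:hilb}). Your high-$h$ alternative is precisely the paper's argument in (c), so there is nothing to compare there.

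Your main line of attack and the paper's argument for (a) share the idea of degenerating to an elliptic K3 with section, but diverge in a way worth noting. You propose to stay over the open locus $\mathcal{U}$ and prove transitivity of the monodromy on the components of a generic fibre. The paper explicitly avoids this: it observes that elliptic K3 surfaces with Picard rank two do \emph{not} lie in $\mathcal{U}$ (every rational curve in $|S+gE|$ is reducible), so one cannot use them as base points for monodromy loops inside $\mathcal{U}$. Instead the paper passes to the closure $\overline{M}^g_{0,1}\subset|\mathcal{L}|$, where reducible and non-nodal limits are allowed, and then constructs explicit paths. The mechanism is not the symmetric-group monodromy on the $24$ nodal fibres that you invoke, but a two-step specialisation through an elliptic K3 with $12$ \emph{cuspidal} fibres: writing $X_{(\alpha_i),K}$ in Weierstrass form with parameter $K$, one lets $K\to 0$ so that each pair of nodal fibres coalesces into a cusp, and then one rotates a single chosen fibre via a one-parameter family in $K$. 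This gives a concrete path in $\overline{M}^g_{0,1}$ from $[S+\sum m_iN_{\alpha_i},X]$ to $[S+gN_{\alpha_1},X]$. The cost of working in the closure is that smoothness is lost, so the paper gets connectedness but not irreducibility---exactly the residual gap it flags.

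You are candid that the transitivity step is the crux and remains open; that assessment is correct. But note the additional subtlety your sketch glosses over: the elliptic limits you want to use lie \emph{outside} $\mathcal{U}$, so ``monodromy of $\pi_1(\mathcal{U}')$ acting on the generic fibre'' cannot be computed directly at those limits without first explaining how the reducible limit curves are approached by irreducible nodal ones. This is exactly the problem the paper solves (for $k=1$) by exhibiting unramified stable-map representatives and invoking deformation theory (Corollary~\ref{unram-surj}); without an analogue of that step your degeneration does not yet produce elements of, or loops in, $\mathcal{U}'$.
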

It was shown by Dedieu that this conjecture would imply that there are no self-rational maps $S \dashrightarrow S$ of
degree $ \geq 2$ on a \emph{generic} K3 surface $S$ with $Pic(S)= \mathbb{Z}$.  A proof of the conjecture for $g < 12$ appears in the recent paper \cite{cilided}. 
The nonexistence of self-rational maps of degree $\geq 2$ on generic K3 surfaces has since
been proven using a different approach by X. Chen, see \cite{chenrat}.

%\begin{figure}[bt]
% \centering
%\includegraphics{modspace.jpg}
%\caption{The universal Severi variety $\mathcal{V}^o_{0,g}(S,M)$}
%\label{modsp}
%\end{figure}

We will primarily concern ourselves with the \emph{rational} case, i.e. the case $h=0$. In this case 
the fibres $V_0(S,kM)$ are generically equal to finitely many isolated points, and hence they
 are not irreducible. It is still, however, perfectly possible that the universal space $\mathcal{V}^g_{0,k}$ is irreducible. 
 In fact, in Section \ref{sect:low} we will prove the irreducibility of $\mathcal{V}^3_{0,k}$, i.e. the case of rational curves on quartic surfaces, for the first few values of $k$.

Since $\mathcal{V}^g_{0,k}$ is smooth, it would be sufficient to prove that this space is connected,
in order to establish Conjecture \ref{conj} for the case $h=0$. Unfortunately, this is very
difficult to do, at least via explicit computations. This is due to the simple reason that one does
not have a good description of the open, dense substack $\mathcal{U} \seq \mathcal{B}_g$. 
Many of the simplest examples of polarised K3 surfaces are not contained in this stack; for instance,
an elliptic K3 surface with Picard number two polarised by a primitive, ample class does not live in this space. Indeed, any rational
curve in a primitive, ample class on such an elliptic K3 surface is \emph{reducible}. Since a standard approach
to dealing with the moduli space of K3 surfaces is to first prove the result on the dense subspace of elliptic surfaces and then
extend the result, this example is somewhat troubling.

In order to rectify this problem, we will in Section \ref{sect:main} drop the condition that the rational curves are irreducible and nodal.
We will consider the stack $|\mathcal{M}|$ over $\mathcal{B}_g$ with fibre over $(S,M)$
given by the complete linear system $|M|$, and look at the closure of $\mathcal{V}^g_{0,1}$ in
$|\mathcal{M}|$, which we denote by $\overline{\mathcal{V}}^g_{0,1}$. 
The advantage of this space is that we can now give many examples of points in 
$\overline{\mathcal{V}}^g_{0,1}$. In fact let $S \to \proj^1$ be an elliptic K3 surface with generic fibre $E$ and
section $T$. Suppose that $S$ has either 24 nodal singular fibres or 12 cuspidal singular fibres, and let $N$
be any singular fibre. Then the (nonreduced) curve $T+gN \in |T+gE|$ is rational, and further
one has $(T+gN,S,T+gE) \in \overline{\mathcal{V}}^g_{0,1}$.

Conjecture \ref{conj} for $h=0$ is then equivalent to the statement that $\overline{\mathcal{V}}_{0,1}$
is irreducible. The main result of this thesis is the following:
\begin{thm} \label{mainthm}
 The space $\overline{\mathcal{V}}^g_{0,1}$ is connected for all $g >2$. 
\end{thm}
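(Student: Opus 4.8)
The plan is to show that every irreducible component of $\overline{\mathcal{V}}^g_{0,1}$ meets a single, explicitly understood locus, so that connectedness follows by exhibiting a ``hub'' through which all components pass. The natural candidate for this hub is the locus of \emph{totally degenerate} rational curves on elliptic K3 surfaces described just before the statement: given an elliptic K3 surface $S \to \proj^1$ with section $T$, generic fibre $E$, and a singular (nodal or cuspidal) fibre $N$, the curve $T + gN \in |T+gE|$ is rational and lies in $\overline{\mathcal{V}}^g_{0,1}$. I would first set up the incidence correspondence carefully: $\overline{\mathcal{V}}^g_{0,1}$ maps to $\mathcal{B}_g$, which is irreducible, so it suffices to understand the fibres and how components of the total space sit over $\mathcal{B}_g$. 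Because $\mathcal{V}^g_{0,1}$ is smooth of the expected dimension and dense in its closure, each component of $\overline{\mathcal{V}}^g_{0,1}$ dominates $\mathcal{B}_g$ (or at least has image whose closure is all of $\mathcal{B}_g$ — this needs the fact that the generic fibre is finite and nonempty over $\mathcal{U}$, together with properness of the closure in $|\mathcal{M}|$).

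The core of the argument is a degeneration/specialization step. Take an arbitrary point of $\overline{\mathcal{V}}^g_{0,1}$, lying over some $(S,M) \in \mathcal{B}_g$. Since $\mathcal{B}_g$ is irreducible and the elliptic K3 surfaces with a section form a dense substack (of Picard rank $\geq 2$, hence in the boundary of generic behaviour but still dense in $\mathcal{B}_g$ in the relevant sense — one uses that the period domain is irreducible and Noether--Lefschetz loci are dense), I would specialize the chosen K3 surface to an elliptic one $S_0 \to \proj^1$ with section $T$ and with $M$ specializing to $T + gE$. By properness of $\overline{\mathcal{V}}^g_{0,1} \to |\mathcal{M}| \to \mathcal{B}_g$, the chosen rational curve specializes to \emph{some} rational curve $C_0 \in |T+gE|$ in the closure. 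The key classification input is then: every rational curve in $|T+gE|$ on such an elliptic K3 is a specialization of, or deforms to, a configuration built from $T$ and fibre components, and all such configurations lie in one connected family — in particular they all specialize further to the totally degenerate curve $T + gN$ for a fixed singular fibre $N$. This reduces connectedness to: (i) the totally degenerate locus is connected (it is, essentially a point up to the choice of elliptic K3 and singular fibre, and the parameter space of such data is connected), and (ii) every component of $\overline{\mathcal{V}}^g_{0,1}$ contains a point of this locus.

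Carrying this out requires three technical ingredients, which I would establish as preliminary lemmas. First, a \emph{properness} statement: $\overline{\mathcal{V}}^g_{0,1} \to \mathcal{B}_g$ is proper (clear, since it is a closed substack of the proper-over-$\mathcal{B}_g$ stack $|\mathcal{M}|$), so specializations of curves exist and stay in the closure. Second, a \emph{density} statement: the locus in $\mathcal{B}_g$ of elliptic K3 surfaces $(S,T+gE)$ with a section is nonempty and every component of $\overline{\mathcal{V}}^g_{0,1}$ has a point lying over it — this follows because the components dominate $\mathcal{B}_g$ and the elliptic locus is a nonempty closed (Noether--Lefschetz) substack that every dominant family must hit. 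Third, and this is where the real work lies, the \emph{local connectedness of the fibre}: I must show that the fibre of $\overline{\mathcal{V}}^g_{0,1}$ over such an elliptic $(S_0, T+gE)$ is connected, by analyzing all possible degenerate rational curves in $|T+gE|$. Such a curve $C_0$ has arithmetic genus $g$ and geometric genus $0$, so writing $C_0 = aT + D$ with $D$ vertical (a sum of fibre components), the constraint that the normalization is rational forces, via the adjunction/genus formula on the elliptic surface, a rigid combinatorial structure on $D$; I would enumerate these and connect them by further degenerations inside $|T+gE|$ down to $T + gN$.

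The main obstacle I anticipate is precisely this last step — proving that \emph{every} component of the fibre over an elliptic K3 surface contains the totally degenerate curve $T+gN$, i.e. controlling the full list of limiting rational curves and their incidences. The subtlety is that a priori a component of $\overline{\mathcal{V}}^g_{0,1}$ might specialize over $S_0$ to a rational curve whose vertical part uses several \emph{distinct} singular fibres, or uses a singular fibre with a non-reduced multiplicity pattern different from $gN$, and one must show these can all be connected, within the closure of the Severi variety, to the single standard degeneration $T+gN$. This is a delicate deformation-theoretic and combinatorial analysis on the elliptic surface; handling cuspidal fibres versus nodal fibres, and non-reduced configurations, is where the argument will be most technical, and I expect it to require a careful smoothing result (e.g. that partial smoothings of $T+gN$ inside $|T+gE|$ realize all nearby degenerate rational curves) together with a monodromy argument permuting the singular fibres.
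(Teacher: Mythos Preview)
Your overall architecture matches the paper's: each irreducible component of $\overline{\mathcal{V}}^g_{0,1}$ surjects onto $\mathcal{B}_g$ by properness, hence meets the fibre over a fixed smooth elliptic K3 surface $(S_0,T+gE)$, and one then tries to connect every rational curve appearing there to the single hub $T+gN_1$. You also correctly anticipate that a monodromy argument permuting the singular fibres enters.

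There is, however, a genuine gap. First a framing issue: the fibre of $\overline{\mathcal{V}}^g_{0,1}$ over a fixed elliptic K3 is a \emph{finite discrete} set --- the curves $T+\sum_i m_i N_i$ with $\sum m_i=g$ --- so ``local connectedness of the fibre'' is false, and ``partial smoothings of $T+gN$ inside $|T+gE|$'' do not exist on a fixed $S_0$; the connecting paths must leave the fibre and pass through other K3 surfaces. More substantively, monodromy in the configuration space of the singular fibres only realizes the symmetric group on the $N_i$, so it permutes the multiset $(m_1,\ldots,m_{12})$ but cannot change its partition type: it will never connect $T+2N_1+N_2$ to $T+3N_1$. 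The paper supplies the missing multiplicity-transfer mechanism by working with an explicit one-parameter Weierstrass family $X_{(a_i),K}$ in which each of the $12$ cuspidal fibres at $K=0$ bifurcates, for $K\neq 0$, into a pair of nearby \emph{nodal} fibres, one at $\alpha_j$ and one at $\beta(K)\alpha_j$ with $\beta(K)\to 1$. One peels a single copy of $N_{\alpha_j}$ off as the new nodal fibre $N_{\beta(K)\alpha_j}$, rotates it through a further variation of $K$ so that it approaches $\alpha_1$ instead, and lets $K\to 0$ to recombine; this replaces $(m_1,m_j)$ by $(m_1+1,m_j-1)$ while staying inside $\overline{M}^g_{0,1}$ (Lemma~\ref{steptwo}). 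Iterating reaches $T+gN_{\alpha_1}$. Your proposal senses that ``cuspidal versus nodal'' is the crux but does not provide this, or any equivalent, device for altering the partition; without it the argument does not close.
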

Note that this does not resolve the conjecture completely, since the space
 $\overline{\mathcal{V}}^g_{0,1}$ might no longer be smooth. It is reasonable to hope that a more detailed study
of the scheme structure of $\mathcal{V}^g_{0,1}$ would establish irreducibility and thereby resolve the conjecture in this case.

The idea behind our proof is very simple. Let $(S,M)$ be a primitively polarized K3 surface of genus $g$ and $C \in |M|$ a rational
curve, and assume that $(C,S,M) \in \overline{\mathcal{V}}^g_{0,1}$. Then as a first step we will deform $(C,S,M)$ to a rational curve on an elliptic K3 surface with $12$ cuspidal
fibres. The second step is to show there exists a path in $\overline{\mathcal{V}}^g_{0,1}$ connecting any two rational curves on elliptic K3 surfaces with
$12$ cuspidal fibres. This is achieved by explicit constructions of the required paths.

\emph{Acknowledgements.} This work was funded by a Qualification Scholarship from Bonn 
International Graduate School in Mathematics (BIGS). I am most grateful to my supervisor, Professor Daniel Huybrechts,
who introduced me to the topic and generously spent many hours discussing the various intricacies of K3 surfaces with me.  

\chapter{The Universal Severity Variety of Rational Curves on K3 Surfaces} 
Before we go further, we need to recall the basic results on the construction of Severi varieties on rational curves. Let $S$ be a smooth 
K3 surface with a a primitive, ample line bundle $L$ such that $(L \cdot L)=2g-2$. Recall from \cite{tannen} that there
is a smooth variety $V_{h}(S,kL)$ parametrising nodal, irreducible curves with fixed geometric genus $h$ in the linear system $|kL|$. The variety $V_{h}(S,kL)$ is either 
empty or of dimension $h$. Note that any curve $C \seq S$ in $|L|$ has fixed arithmetic genus $1+k^2(g-1)$; hence if $C$ is nodal and irreducible then $C$ has geometric genus $h$ if and only
if it has $1+k^2(g-1)-h$ nodes.  So fixing the geometric genus is equivalent to fixing the number of nodes.

The above results can be relativised to deal with families of primitively polarised K3 surfaces, see for example \cite{flam}. Let $\mathcal{B}_g$ denote the Deligne--Mumford stack parametrising primitively polarised K3 surfaces of genus $g$. Then there is an algebraic  stack $\mathcal{V}_{h,k}^g \to \mathcal{B}_g$ 
 with fibre over $(X,L) \in \mathcal{B}_g$
  given by $V_h(X,kL)$. In the case of genus zero, the stack $\mathcal{V}_{h,k}^g$
  is Deligne--Mumford and the morphism $\mathcal{V}_{0,k}^g \to \mathcal{B}_g$ is smooth. 
 
We can avoid the use of stacks in the following way. Let $p(t)=(g-1)t^2+2$
and $N=P(3)-1$. There
is an open subscheme $W_g$ of a Hilbert variety parametrising pairs $(Z,L)$ with
$Z \seq \proj^N$ a closed subscheme and $L \in Pic(Z)$ a line bundle such that $(Z,L)$ is
a primitively polarised K3 surface of genus $g$ and $\mathcal{O}_{\proj^n}(1)|_{Z} \simeq L^3$, by \cite[Sect. 2.3]{andre} and \cite[Lem. 6.18]{olsson}. Further, $W_g$ is a smooth and irreducible variety, which comes equipped with a universal family
$g: \mathcal{X} \to W_g$ and a line bundle $\mathcal{L} \in Pic(\mathcal{X})$ such that for all $t \in W_g$, $(\mathcal{X}_t,\mathcal{L}_t)$ is a primitively
polarized K3 surface. There is, moreover, a variety $|k \mathcal{L}| \to W_g$ with fibre over $t \in T$ given by $|k \mathcal{L}_t|$, and there is a locally closed subscheme $M^g_{0,k}$
of $|k \mathcal{L}|$ parametrising irreducible, nodal curves of geometric genus zero. We have an obvious $PGL(N+1)$ action with finite stablizers on $W_g$ and on $M_{0,k}^g$ such that
the projection $M_{0,k}^g \to W_g$ is $PGL(N+1)$ equivariant.
The quotient stack $W_g / PGL(N+1)$ is then isomorphic to $\mathcal{B}_g$ and further $\mathcal{V}_{0,k}^g \simeq M_{0,k}^g / PGL(N+1) \to \mathcal{B}_g$. Moreover, the proof that the morphism of stacks $\mathcal{V}_{0,k}^g \to \mathcal{B}_g$ is smooth also shows the
smoothness of the projection morphism $M_{0,k}^g \to W_g$. Obviously, the connectedness or
irreducibility of $M_{0,k}^g$ would imply that $\mathcal{V}_{0,k}^g$ is connected, resp. irreducible.

\section{The moduli space of rational curves on quartic surfaces} \label{sect:low}
In this section we study the moduli space of nodal, rational curves on K3 surfaces with genus three. In this case the
K3 surfaces are quartic surfaces in $\proj^3$. We prove the first two cases completely
and obtain partial results for the third. The key result we use here is the fact that the Severi variety of rational curves
on a Del Pezzo variety is always irreducible (with one simple exception), see \cite{testa}.
 
Let $X \seq \proj^3$ be a smooth, quartic surface. The space $\mathcal{U}$ of such surfaces gives a dense, open subset of $|\mathcal{O}_{\proj^3}(4)|$. 
Let $L$ be an ample line bundle on $X$. If $X$ is generic, then $Pic(X) \simeq \mathbb{Z}$ and the ample line
bundle $L$ is of the form $\mathcal{O}_{\proj^3}(l)|_{X}$ by Max Noether's Theorem.
From now on we drop the $\proj^3$ from the notation and write $\mathcal{O}(m)$ for the line bundle $\mathcal{O}_{\proj^3}(m)$.
We are interested then in the irreducibility of the moduli space of pairs:
\[
 \mathcal{M}_l:= \{ (X,C)  \; | \; X \in \mathcal{U},\; C \in |\mathcal{O}_X(l)|  \; \text{a rational, irreducible, nodal curve} \}.
\]

Let $\mathcal{Q} \seq \mathcal{U} \times \proj^3$ be the universal smooth quartic hypersurface with
 projection $k: \mathcal{Q} \to \mathcal{U}$. We may then view $\mathcal{M}_l$ as a smooth subvariety of the projective bundle 
$\textbf{Proj} (\text{Sym}(k_* (\mathcal{O}_{\proj^3}(l)|_{\mathcal{Q}}))^*)$. This is a projective bundle over $\mathcal{U}$ which has fibre over $X$ 
isomorphic to
$\proj (H^0(\mathcal{O}_X (l)))$.
\begin{lem}
 Assume $l \leq 3$. Then there is an isomorphism 
 $$h: \mathbf{Proj}(\mathrm{Sym}(k_* (\mathcal{O}_{\proj^3}(l)|_{\mathcal{Q}}))^*)  \to \mathcal{U} \times |\mathcal{O}(l)| .$$
\end{lem}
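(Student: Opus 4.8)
The plan is to reduce everything to showing that the vector bundle $\mathcal{E} := k_* (\mathcal{O}_{\proj^3}(l)|_{\mathcal{Q}})$ on $\mathcal{U}$ is \emph{trivial}. Once this is known, the isomorphism $h$ is formal: for a vector space $V$ one has $\mathbf{Proj}(\mathrm{Sym}((V \otimes \mathcal{O}_{\mathcal{U}})^*)) \simeq \mathcal{U} \times \proj(V)$, and here we will get $V = H^0(\proj^3,\mathcal{O}(l))$, so $\proj(V) = |\mathcal{O}(l)|$, which also matches the fibrewise description $\proj(H^0(\mathcal{O}_X(l)))$ recorded above (the restriction map $H^0(\proj^3,\mathcal{O}(l)) \to H^0(X,\mathcal{O}_X(l))$ being an isomorphism for $l \le 3$). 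The map $h$ is then the globalization of this restriction isomorphism.

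To prove triviality of $\mathcal{E}$, I would work on the product $\mathcal{U} \times \proj^3$ with its two projections $\pi_1, \pi_2$, so that $k = \pi_1|_{\mathcal{Q}}$ and $\mathcal{O}_{\proj^3}(l)|_{\mathcal{Q}} = \pi_2^*\mathcal{O}(l)|_{\mathcal{Q}}$. Writing $\mathcal{A} \boxtimes \mathcal{B}$ for $\pi_1^*\mathcal{A} \otimes \pi_2^*\mathcal{B}$: since $\mathcal{U}$ is an open subscheme of $|\mathcal{O}(4)|$ and $\mathcal{Q}$ is (the restriction of) the universal quartic, its ideal sheaf on $\mathcal{U} \times \proj^3$ is $\mathcal{I}_{\mathcal{Q}} \simeq \mathcal{O}_{\mathcal{U}}(-1) \boxtimes \mathcal{O}_{\proj^3}(-4)$, where $\mathcal{O}_{\mathcal{U}}(-1)$ is the restriction of $\mathcal{O}_{|\mathcal{O}(4)|}(-1)$. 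Twisting the structure sequence of $\mathcal{Q} \seq \mathcal{U} \times \proj^3$ by $\pi_2^*\mathcal{O}(l)$ gives
$$0 \to \mathcal{O}_{\mathcal{U}}(-1) \boxtimes \mathcal{O}_{\proj^3}(l-4) \to \mathcal{O}_{\mathcal{U}} \boxtimes \mathcal{O}_{\proj^3}(l) \to \pi_2^*\mathcal{O}(l)|_{\mathcal{Q}} \to 0.$$

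Next I would apply $R\pi_{1*}$, using flat base change along the $\proj^3$-bundle $\pi_1$ together with the projection formula, to get $R^i\pi_{1*}(\mathcal{A} \boxtimes \mathcal{O}_{\proj^3}(m)) \simeq \mathcal{A} \otimes H^i(\proj^3,\mathcal{O}(m))$ for any quasi-coherent $\mathcal{A}$ on $\mathcal{U}$. This is exactly where the hypothesis $l \le 3$ is used: then $-3 \le l-4 \le -1$, so $H^i(\proj^3,\mathcal{O}(l-4)) = 0$ for \emph{all} $i$, and the leftmost term of the sequence contributes nothing to the pushforward. Hence
$$k_*(\mathcal{O}_{\proj^3}(l)|_{\mathcal{Q}}) \simeq \pi_{1*}(\mathcal{O}_{\mathcal{U}} \boxtimes \mathcal{O}_{\proj^3}(l)) \simeq H^0(\proj^3,\mathcal{O}(l)) \otimes \mathcal{O}_{\mathcal{U}},$$
the isomorphism being the relative restriction map, and the Lemma follows by applying $\mathbf{Proj}(\mathrm{Sym}((-)^*))$ to both sides.

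I do not expect a genuine obstacle here; the points requiring (minor) care are the identification of $\mathcal{I}_{\mathcal{Q}}$ as an external product of line bundles on $\mathcal{U} \times \proj^3$ (so that the only cohomology in play is that of line bundles on $\proj^3$, and base change over $\mathcal{U}$ can be applied freely), and recording that $\mathcal{E}$ is locally free because its fibre $H^0(X,\mathcal{O}_X(l))$ has dimension $\binom{l+3}{3}$ independent of $X$. It is worth noting that the argument breaks precisely for $l \ge 4$: then $H^0(\proj^3,\mathcal{O}(l-4)) \neq 0$, the leftmost term survives, $\mathcal{E}$ is no longer trivial, and the conclusion genuinely fails.
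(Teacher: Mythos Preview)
Your proposal is correct and follows essentially the same approach as the paper: both use the ideal-sheaf sequence of the quartic twisted by $\mathcal{O}(l)$ and the vanishing of the cohomology of $\mathcal{O}_{\proj^3}(l-4)$ for $l\le 3$ to conclude that restriction induces an isomorphism $H^0(\proj^3,\mathcal{O}(l))\otimes\mathcal{O}_{\mathcal{U}} \xrightarrow{\sim} k_*(\mathcal{O}_{\proj^3}(l)|_{\mathcal{Q}})$, whence the $\mathbf{Proj}$ is a product. The only difference is presentational: the paper argues fibrewise and then asserts the lift, whereas you carry out the computation relatively, identifying $\mathcal{I}_{\mathcal{Q}}\simeq \mathcal{O}_{\mathcal{U}}(-1)\boxtimes\mathcal{O}_{\proj^3}(-4)$ and pushing forward the global exact sequence.
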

 \begin{proof}
 Consider the short exact sequence on $\proj^3$
\[
 0 \to \mathcal{O}(-4) \otimes \mathcal{O}(l) \to \mathcal{O}(l) \to \mathcal{O}_X(l) \to 0 
\]
 where $X$ is a quartic surface. For $l \leq 3$, taking the associated long exact sequence of cohomology gives us an isomorphism
\[
 H^0(\mathcal{O}(l)) \simeq H^0(\mathcal{O}_X (l))
\]
induced by the restriction morphism. This lifts to the desired isomorphism $h$. To be precise, let $j: \mathcal{U} \times \proj^3 \to \proj^3$
be the projection. Then $h$ is induced by the isomorphism $ (k_* j^* \mathcal{O}_{\proj^3}(l)|_{\mathcal{Q}})^* \to (k_* j^* \mathcal{O}_{\proj^3}(l))^*$, 
since $\textbf{Proj} (\text{Sym}(k_* j^* \mathcal{O}_{\proj^3}(l))^*) \simeq   \mathcal{U} \times |\mathcal{O}(l)|$ (this last step comes about from the fact that
$k_* j^* \mathcal{O}_{\proj^3}(l)$ is the trivial bundle on $\mathcal{U}$ of rank equal to $\dim H^0(\proj^3,\mathcal{O}_{\proj^3}(l))$).
 \end{proof}

\begin{lem}
 Let $S \in |\mathcal{O}(l)|$ be an integral hypersurface and let $W_S$ denote the Severi variety of irreducible, rational, nodal curves
on $S$ in the linear system $|\mathcal{O}_S (4)|$. Assume $W_S$ is nonempty. Then $\dim(W_S)=11$ for $l=1$ and $\dim(W_S)=15$ for $l=2$.
\end{lem}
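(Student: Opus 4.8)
The plan is to recognise $W_S$ as a Severi variety of rational curves in a complete linear system on a Del Pezzo surface, whose dimension is known. First I would compute $K_S$ by adjunction: since $S\seq\proj^3$ is a hypersurface of degree $l$, one has $K_S=(K_{\proj^3}+S)|_S=\mathcal{O}_S(l-4)$, so for $l=1,2$ the anticanonical class $-K_S=\mathcal{O}_S(4-l)$ is the restriction of an ample divisor, hence ample. Since $S$ is an integral hypersurface it is Cohen--Macaulay (hence satisfies $S_2$) with at worst an isolated singularity (hence $R_1$), so $S$ is normal; thus $S$ is a normal Gorenstein Del Pezzo surface. Concretely, $S=\proj^2$ when $l=1$, and $S$ is either the smooth quadric $\proj^1\times\proj^1$ or the quadric cone when $l=2$.

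Then I would identify $W_S$ with the Severi variety $V_0(S,\mathcal{O}_S(4))$ of irreducible rational nodal curves in the sense recalled above, i.e.\ irreducible $\delta$-nodal curves in $|\mathcal{O}_S(4)|$ with $\delta=p_a(\mathcal{O}_S(4))=1+\tfrac{1}{2}\mathcal{O}_S(4)\cdot\big(\mathcal{O}_S(4)+K_S\big)=1+2l^2$ (so $\delta=3$ for $l=1$ and $\delta=9$ for $l=2$). On any smooth surface every component of such a Severi variety has dimension $\geq \dim|\mathcal{O}_S(4)|-\delta$; for the Del Pezzo surfaces $\proj^2$ and $\proj^1\times\proj^1$ this bound is attained and $V_0(S,\mathcal{O}_S(4))$ is, whenever nonempty, smooth and equidimensional --- by the classical dimension formula for plane nodal curves when $l=1$, and by the theory of rational curves on Del Pezzo surfaces of \cite{testa} when $l=2$ (here $\proj^1\times\proj^1$ is a Del Pezzo surface of degree $8\neq1$). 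Since $\mathcal{O}_S(1)\cdot\mathcal{O}_S(1)=\deg S=l$, this gives
\[
\dim W_S=\dim|\mathcal{O}_S(4)|-\delta=-K_S\cdot\mathcal{O}_S(4)-1=4l(4-l)-1,
\]
namely $11$ for $l=1$ and $15$ for $l=2$. For $l=2$ one can check the dimension directly: on $\proj^1\times\proj^1$ one has $\mathcal{O}_S(4)=\mathcal{O}(4,4)$ with $\dim|\mathcal{O}(4,4)|=24$ and arithmetic genus $9$, giving $24-9=15$; for the quadric cone one passes to the minimal resolution $\mathbb{F}_2$ (a weak Del Pezzo surface), on which $\mathcal{O}_S(4)$ pulls back to $4e+8f$ (with $e^2=-2$, $f^2=0$, $e\cdot f=1$), again of projective dimension $24$ and arithmetic genus $9$, so the analogous count gives the same value --- and in any case the generic integral quadric is smooth, so the cone is inessential for the applications.

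The one step that needs care --- and the only non-formal input --- is the equality $\dim W_S=\dim|\mathcal{O}_S(4)|-\delta$, equivalently the smoothness of the Severi variety at a general point, equivalently that the $\delta$ nodes of a general member impose independent conditions on $|\mathcal{O}_S(4)|$ (i.e.\ $H^1(S,\mathcal{I}_Z(4))=0$ for $Z$ the nodal scheme). This is precisely where the Del Pezzo hypothesis enters, and I would take it from \cite{testa} (and from classical plane-curve theory when $l=1$) rather than reprove it; everything else is adjunction and Riemann--Roch on $S$.
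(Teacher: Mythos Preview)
Your argument for $l=1$ and for the smooth quadric is correct and matches the paper's: adjunction gives $K_S=\mathcal{O}_S(l-4)$, and then $\dim W_S=-(K_S\cdot\mathcal{O}_S(4))-1$ by the standard dimension formula for Severi varieties on smooth rational surfaces. The gap is the quadric cone. You do pass to the resolution $\mathbb{F}_2$ and compute with the class $4e+8f$, but since $(4e+8f)\cdot e=0$, every \emph{irreducible} curve in $|4e+8f|$ on $\mathbb{F}_2$ is disjoint from the exceptional curve $e$ and hence descends to a curve on the cone \emph{avoiding} the vertex. Curves in $W_S$ that pass through the vertex have proper transform lying in $|4e+8f-me|$ for some $m\ge 1$, and your count does not see these strata. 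The escape clause ``the cone is inessential for the applications'' is also unavailable: the very next theorem deduces \emph{flatness} of $p:\mathcal{M}_2\to|\mathcal{O}(2)|$ from equidimensionality of \emph{all} nonempty fibres, so the cone fibre must be handled on its own terms.

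The paper fills this in by stratifying $W_S=W_1\sqcup W_2$ according to whether the curve meets the vertex, identifying $W_1$ with a finite union $\coprod_{1\le m\le 4}Y_m$ of Severi varieties on $\mathbb{F}_2$ in the classes $|4e+8f-me|$, and observing that because $(e\cdot K_{\mathbb{F}_2})=0$ one has $-(4e+8f-me)\cdot K_{\mathbb{F}_2}=16$ independently of $m$; the dimension formula on the smooth rational surface $\mathbb{F}_2$ then gives $\dim Y_m=\dim W_2=15$ for every stratum. Note too that the reference you invoke, \cite{testa}, treats \emph{smooth} Del Pezzo surfaces, so it does not cover the cone or the weak Del Pezzo $\mathbb{F}_2$ directly; the paper instead appeals to \cite{tannen2}, which applies to any smooth rational surface.
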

\begin{proof}
For $l=1$, $W_S$ is the 
Severi variety of irreducible, rational, nodal plane curves $\proj^1 \to \proj^2$ of degree four. So in this case
it is elementary to see that $W_S$ is irreducible and smooth with $\dim(W_S)=11$.

For $l=2$ we have two cases to consider. Firstly, there is the case where $S$ is smooth. Then $S$ is an irreducible, smooth, projective rational surface with
anti-ample canonical bundle $K_S=\mathcal{O}_S (-2)$. Hence by \cite{tannen2} we have $W_S$ is smooth of $\dim(W_S)=-(K_S \cdot \mathcal{O}(4))-1=15$. 
Next, suppose $S \seq \proj^3$ is an integral, but singular, quadric surface. From Jacobi's Theorem, the only quadratic forms in four variables are, up 
to projective equivalence,
\[
 0, \; x_0^2, \; x_0^2+x_1^2, \; x_0^2+x_1^2+x_2^2, \; x_0^2+x_1^2+x_2^2+x_3^3.
\]
The only forms from this list which define irreducible surfaces are $x_0^2+x_1^2+x_2^2$ and $x_0^2+x_1^2+x_2^2+x_3^2$. The latter
defines a smooth quadric surface, whereas the first of these defines a conic quadric with a singular point at the vertex $v=[0:0:0:1]$. So we assume
that $S$ is such a conic quadric. Now, following \cite[p.\ 374, ex.\ 2.11.4]{har}, blowing up at $v$ produces a smooth, rational ruled surface $S'$ over $\proj^1$
and a birational map $f: S' \to S$ which is an isomorphism outside of $f^{-1}(v)=E$ and with $E$ a smooth, rational section $\proj^1 \to S'$ satisfying
$(E \cdot E)=-2$. The Severi variety $W_S$ of irreducible, rational, nodal curves with associated line bundle $L:=\mathcal{O}_S (4)$ can be stratified as $W_S=W_1 \sqcup W_2$
with $W_1$ the closed subset of irreducible, rational curves in $|L|$ which pass through $v$ and $W_2=W_S - W_1$. We claim that $W_i$ is either empty or has dimension
$15$ for $i=1,2$, from which it follows that $\dim(W)=15$.

We start by identifying $W_1$ and $W_2$ with Severi varieties on $S'$. If $C \in W_1$, then $f^{-1}(C)$ has the form $C'+mE$ for some positive integer $m >0$ and with $C' \in |f^*(L)-mE|$ irreducible and rational.
Let $Y_m$ denote the space of irreducible, rational curves in $|f^{*}(L)-mE|$. We can then identify $W_1$ with the countable union $\displaystyle \coprod_{m \geq 1} Y_m$. Likewise, we can identify $W_2$ with the space of irreducible, rational curves in $|f^*(L)|$. Let $F \seq S'$ be 
a fibre of the ruling $S' \to \proj^1$; we then have that $E, F$ generate $Pic(S')$ and $(F \cdot F)=0$, $(F \cdot E)=1$. The canonical bundle of $S'$ is given by
$K_{S'}=-2E-4F$ from \cite[p.\ 374, Cor.\ 2.11]{har}. The obvious relations $(f^*(\mathcal{O}(1)) \cdot f^*(\mathcal{O}(1)))=2$, $(f^*(\mathcal{O}(1)) \cdot E)=0$ readily give
that $f^*(\mathcal{O}(1))=2F+E$ and so $f^*(L)=4(2F+E)$. Noting that $(f^{*}(L)-mE \cdot F)=4-m<0$ for $m >4$, we actually have that
$Y_m$ is empty for $m >4$ and so $W_1$ is a finite union $\displaystyle \coprod_{1 \leq m \leq 4} Y_m$. 
For any rational surface $Z$ and line bundle $T \in Pic(Z)$ with $(T \cdot K_F) < 0$, the Severi variety of irreducible, rational curves
in $|T|$ is either empty or of dimension $-(T \cdot K_F)-1$, see \cite{tannen2}. 
 We calculate $-(f^*(L)-mE \cdot K_{S'})=16=-(f^*(L) \cdot K_{S'})$ and hence $\dim(Y_i)=15$ and $\dim(W_2)=15$, so long as $Y_i$, resp. $W_2$, are nonempty. 
 Thus $W_S$ has dimension $15$.

\end{proof}

Now consider the morphism $p:=pr_2 \circ h^{-1}|_{\mathcal{M}_l}: \; \mathcal{M}_l \to |\mathcal{O}(l)|$, which is defined
for $l=1,2,3$.
\begin{thm}
 Assume $l \in \{1,2\}$. Then the morphism $p$ is flat. Further, the generic fibre of $p$ is irreducible. Hence $\mathcal{M}_l$ is irreducible for $l=1,2$.
\end{thm}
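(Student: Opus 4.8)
The plan is to analyse $p$ fibrewise over $|\mathcal{O}(l)|$, establish that the generic fibre is irreducible, and then glue the fibres together using flatness.

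\emph{Step 1: unravelling the fibres of $p$.} Fix $S\in|\mathcal{O}(l)|$. By the first Lemma a point of $\mathcal{M}_l$ lying over $S$ is a smooth quartic $X$ together with a divisor $C\in|\mathcal{O}_X(l)|$ whose image under the restriction isomorphism $H^0(\mathcal{O}(l))\cong H^0(\mathcal{O}_X(l))$ is the section defining $S$; equivalently $C$ is the scheme-theoretic intersection $X\cap S$, and $C$ is required to be irreducible, rational and nodal, i.e. a point of $W_S$. Thus $p^{-1}(S)$ is identified with the set of smooth quartics $X$ such that $X\cap S\in W_S$, and $X\mapsto [X\cap S]$ defines a morphism $q\colon p^{-1}(S)\to W_S$. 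The fibre of $q$ over $C$ is the locus of smooth members of the linear system of quartics restricting on $S$ to a scalar multiple of the equation of $C$, namely $\mathbb{P}\bigl(\mathbb{C}\cdot F_0\oplus f_S\cdot H^0(\mathcal{O}(4-l))\bigr)\cong\mathbb{P}^{h^0(\mathcal{O}(4-l))}$, where $f_S$ defines $S$ and $F_0$ is any quartic with $F_0|_S$ the equation of $C$. Letting $C$ vary (the $\mathbb{C}\cdot F_0$ part being the tautological line bundle on $W_S\subseteq\mathbb{P}(H^0(\mathcal{O}_S(4)))$, since the kernel of $H^0(\mathcal{O}(4))\twoheadrightarrow H^0(\mathcal{O}_S(4))$ is $f_S\cdot H^0(\mathcal{O}(4-l))$) exhibits $p^{-1}(S)$ as the open subscheme ``$X$ smooth'' inside a projective bundle $\mathbb{P}(\mathcal{E})\to W_S$ with $\mathrm{rank}\,\mathcal{E}=1+h^0(\mathcal{O}(4-l))$. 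A dimension count then gives $\dim p^{-1}(S)=\dim W_S+h^0(\mathcal{O}(4-l))$, which by the dimension Lemma above is $11+20=31$ for $l=1$ and $15+10=25$ for $l=2$ (the value $15$ holding also when $S$ is a quadric cone), provided $W_S\neq\emptyset$; fibres over non-integral quadrics are empty, since there such an intersection is never reduced and irreducible.

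\emph{Step 2: the generic fibre is irreducible.} For generic $S$ --- every $S$ when $l=1$, and a smooth quadric when $l=2$ --- the base $W_S$ of this bundle is irreducible: for $l=1$ it is the classical Severi variety of rational nodal plane quartics, and for $l=2$ it is the Severi variety of irreducible rational curves in $|\mathcal{O}_S(4)|=|-2K_S|$ on the degree $8$ del Pezzo surface $S\cong\mathbb{P}^1\times\mathbb{P}^1$, hence irreducible by Testa's theorem \cite{testa}. Granting that the locus of smooth quartics is dense in $\mathbb{P}(\mathcal{E})$ --- a Bertini-type statement, the base locus of the system of quartics cutting out $C$ along $S$ being exactly $C$ --- it follows that $p^{-1}(S)$ is a dense open subset of the irreducible variety $\mathbb{P}(\mathcal{E})$, hence irreducible (and nonempty: a generic quartic K3 surface carries irreducible rational nodal curves in $|\mathcal{O}_X(l)|$, e.g. by the Yau--Zaslow count). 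I expect this Bertini step --- verifying that a generic quartic cutting out a given nodal curve $C$ along $S$ is smooth, so that the clean ``projective bundle over $W_S$'' picture genuinely holds --- together with the correct invocation of Testa's irreducibility theorem, to be the main point; the remainder is dimension bookkeeping.

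\emph{Step 3: flatness.} The variety $\mathcal{M}_l$ is smooth of pure dimension $34$, being smooth of relative dimension $0$ over the irreducible $34$-dimensional space $\mathcal{U}$ of smooth quartics, and $|\mathcal{O}(l)|$ is regular; so by miracle flatness $p$ is flat over any open subset on which the fibres have the expected dimension $34-\dim|\mathcal{O}(l)|$, i.e. $31$ for $l=1$ and $25$ for $l=2$. By Step 1 this holds over all of $\mathbb{P}^3$ when $l=1$ (there the fibres are even mutually isomorphic under the $PGL(4)$-action), and over the dense open locus of integral quadrics when $l=2$.

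\emph{Step 4: irreducibility of $\mathcal{M}_l$.} Let $B^{\circ}\subseteq|\mathcal{O}(l)|$ be the dense open subset of those $S$ with $p^{-1}(S)$ irreducible of the expected dimension: all of $\mathbb{P}^3$ for $l=1$, and the locus of smooth quadrics for $l=2$, the latter being a single $PGL(4)$-orbit so that Step 2 applies to all of it. Over the complement the fibres of $p$ have dimension at most $25$ while the complement itself has dimension at most $8$ (for $l=2$ it is the discriminant hypersurface in $\mathbb{P}^9$, whose points are quadric cones or non-integral quadrics), so $p^{-1}(|\mathcal{O}(l)|\setminus B^{\circ})$ has dimension at most $33<34$; since $\mathcal{M}_l$ is pure of dimension $34$, the open set $p^{-1}(B^{\circ})$ meets every irreducible component of $\mathcal{M}_l$ and is dense in it. Finally $p\colon p^{-1}(B^{\circ})\to B^{\circ}$ is flat, hence open, so every irreducible component of the pure-$34$-dimensional $p^{-1}(B^{\circ})$ dominates the irreducible base $B^{\circ}$; if there were two distinct components their images would overlap in a dense open subset of $B^{\circ}$, over which the fibre of $p$ would be reducible, contradicting Step 2. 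Hence $p^{-1}(B^{\circ})$ is irreducible, and therefore so is its closure $\mathcal{M}_l$.
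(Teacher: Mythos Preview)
Your proof is correct and follows essentially the same approach as the paper: identify the fibres of $p$ as open subsets of projective bundles over the Severi varieties $W_S$, use the preceding lemma on $\dim W_S$ (including the quadric-cone case) to apply miracle flatness, and invoke Testa's theorem for irreducibility of the generic fibre. Your Step~4 spells out in more detail what the paper states in one line, and the ``Bertini step'' you flag in Step~2 reduces to nonemptiness of the generic fibre (automatic once $p$ is dominant, or via Yau--Zaslow as you note), so these are minor expository differences rather than substantive ones.
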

\begin{proof}
We will make use of the following result from \cite[prop. 6.1.5]{egaiv}: suppose $f: X \to Y$ is a morphism between locally Noetherian schemes
with $X$ Cohen--Macaulay and $Y$ regular, and further suppose that for each $x \in X$ the dimension of the scheme theoretic fibre $X_{f(x)}$
is constant of dimension $\dim_x X-\dim_{f(x)} Y$. Then $f$ is flat.

Since $\mathcal{M}_l$ and $|\mathcal{O}(l)|$ are smooth, it is enough to show that nonempty fibres of $p$ have constant dimension 
$$\dim(\mathcal{M}_l) - \dim (|\mathcal{O}(l)|)=\dim(|\mathcal{O}(4)|)-\dim (|\mathcal{O}(l)|)=34-\binom{l+3}{3}+1$$ (there are only finitely many rational curves
in any linear system on a K3 surface, so that $\dim(\mathcal{M}_l)=\dim(|\mathcal{O}(4)|)$).
Let $S \in |\mathcal{O}(l)|$ lie in the image of $p$. The fibre $(\mathcal{M}_l)_S$ over $S$ can be identified with the subspace of $|\mathcal{O}(4)|$
consisting of smooth quartics $X$ with $X \cap S$ an irreducible, rational, nodal curve. For this to be nonempty, we must have that $S$ is integral. 

Now let $W_S$ denote the space of irreducible, rational, nodal curves on $S$ in the linear system $|\mathcal{O}_S (4)|$. 
We have a morphism  $t: (\mathcal{M}_l)_S \to W_S$, $X \mapsto X \cap S$. Let $C \in W_S$ represent an irreducible, rational, nodal curve. Then we may view the fibre $t^{-1}(C)$
as an open subset of $\proj(Ker(H^0(\mathcal{O}(4))) \to H^0(\mathcal{O}_C (4)))$. Since $C$ is a complete intersection of a quartic and a hypersurface of degree $l$, the restriction morphism 
$H^0(\mathcal{O}(4)) \to H^0(\mathcal{O}_C (4))$ is surjective,
see for example \cite[p.\ 206, ex.\ 3.3]{liu}. Further, an easy calculation gives $\dim(Ker(H^0(\mathcal{O}(4))) \to H^0(\mathcal{O}_C (4)))=21$ for $l=1$ and  
$\dim(Ker(H^0(\mathcal{O}(4))) \to H^0(\mathcal{O}_C (4)))=11$ for $l=2$. So the fibres of $p$ are of constant dimension $\dim(W_S)+20=31=35-\binom{4}{3}$ for $l=1$ and
$\dim(W_S)+10=25=35-\binom{5}{3}$ for $l=2$; and hence we have shown flatness of $p$.

Now let $S \in |\mathcal{O}(l)|$ be generic; then $S$ corresponds to a smooth surface. In this case $W_S$ is smooth and irreducible, since for $l=1$, $S$ is a plane $\proj^2$, and for $l \in \{2,3\}$,
$S$ is a Del Pezzo surface (and not the one exception of Testa's paper). See \cite{testa} for the proof in the
Del Pezzo case.  The morphism  $t: (\mathcal{M}_l)_S \to W_S$, $X \mapsto X \cap S$ constructs the fibre of $p$ as an open set of the projective bundle over $W_S$ with fibre over $C$ given by
 $\proj(Ker(H^0(\mathcal{O}(4))) \to H^0(\mathcal{O}_C (4))).$ Hence the generic fibre of $p$ is irreducible as required.
\end{proof}

\begin{rem}
A smooth cubic hypersurface in $\proj^3$ is still a Del Pezzo surface, so the above 
arguments show that the space 
\begin{align*}
\mathcal{M}_3^0:=& \{ (X,S)  \; | \; X \in |\mathcal{O}(4)|  \; \text{smooth},\; S \in |\mathcal{O}(3)|  \; \text{smooth cubic surface} \\
    & \, \text{with \;} X \cap S \text{\; a rational, irreducible, nodal curve} \}
 \end{align*} 
is irreducible. The problem is that it is not clear that $\mathcal{M}_3^0 \seq \mathcal{M}_3$
is dense. The biggest difficulties come from nonnormal cubic surfaces, which can
have much larger Severi varieties than smooth cubic surfaces.
\end{rem}

\section{The moduli space of stable maps} 
In this section we will review the moduli space of stable maps. We will need this theory in order
to study deformations of rational curves.

Let $C$ be a scheme of dimension one, proper over $k=\C$. We define the \emph{arithmetic genus} to be
$p_a(C):=H^1(C,\mathcal{O}_C)$. We call $C$ a \emph{prestable curve} if it satisfies the
following conditions
\begin{enumerate}
\item $p_a(C)=0$.
\item $C$ is reduced and connected with at worst nodes.
\end{enumerate}
Note that if $C_k$ is any component of a prestable curve $C$ then $p_a(C_k)=0$, so that $C_k$ is a 
smooth rational curve.

Recall the following definition.
\begin{mydef}
Let $S$ be a Noetherian scheme over $k=\C$, $X$ a projective scheme over $S$, and let $H$ be an $S$ ample divisor on $X$. 
A stable map of degree $d$ over $U$ is a pair $( \sigma : C \to U, f: C \to X \times_S U)$ where $U$ is an $S$ scheme and
$f$ and $\sigma$ are morphisms subject to the conditions
\begin{enumerate}
\item $\sigma$ is flat and proper and $p_2 \circ f = \sigma$ where $p_2:X \times_S U \to U$ is the projection.
\item For each closed point $Spec(k) \to U$, $C_k$ is a prestable curve and $(f_{k*} C_k \cdot H_k)=d$, where $f_k: C_k \to X_k$
is the induced map.
\item Let $Spec(k) \to U$ be a closed point and let $Z_k$ be a component of $C_k$ such that the induced map $f_k: C_k \to X_k$ maps 
$Z_k$ to a single point. Then $Z_k$ must intersect at least three other components of $C_k$.
\end{enumerate} 
\end{mydef} 

One may construct a moduli space parametrising stable maps into a projective scheme. 
The following is shown in \cite[ch.10]{arakol}.
\begin{prop}
Let $S$ be a Noetherian scheme over $\C$, and let $X$ be a projective scheme over $S$.
Let $H$ be an $S$ ample line bundle on $X$. Then there is a scheme $T(X/S,d)$ which
is a coarse moduli space for the functor $F_d$. Further, the scheme $T(X/S,d)$ is projective
over $S$.
\end{prop}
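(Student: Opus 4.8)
The plan is to carry out the classical Grothendieck-style construction of the moduli of stable maps: rigidify a stable map by using a natural ample line bundle on its source curve to embed that curve into a fixed projective space, realise the rigidified data as points of a Hilbert scheme, and then divide by the group that forgets the rigidification. Projectivity will come out of a Geometric Invariant Theory argument together with the valuative criterion of properness. First I would reduce to the case $X = \proj^r_S := \proj^r \times_\C S$. Since $H$ is $S$-ample, $H^{\otimes m}$ is $S$-very ample for $m \gg 0$ and gives a closed $S$-embedding $X \hookrightarrow \proj^r_S$; under it a stable map to $X$ of degree $d$ is the same as a stable map to $\proj^r_S$ of degree $md$ whose image lies in the closed subscheme $X$, which is a closed condition on the moduli functor. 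Moreover a stable map to $\proj^r_S$ over an $S$-scheme is simply a stable map to $\proj^r_\C$ over that scheme, so $T(\proj^r_S/S,e) = T(\proj^r_\C/\C,e) \times_\C S$; since every scheme is flat over the field $\C$, formation of a coarse moduli space commutes with this base change, and passage to the closed substack of maps with image in $X$ corresponds to a closed subscheme of the coarse space. Hence it suffices to construct $T := T(\proj^r_\C/\C,e)$ as a coarse moduli space for $F_e$, for every $e$, and to prove it is projective over $\C$.

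For the construction over $\C$, let $f : C \to \proj^r$ be a stable map with $C$ prestable, so $C$ is a connected nodal curve of arithmetic genus $0$. Put $L_f := \omega_C \otimes f^*\sheafO_{\proj^r}(3)$. Condition (3) of the definition of a stable map forces $\deg(L_f|_Z) > 0$ on every component $Z$ of $C$: a component contracted by $f$ meets at least three others, so $\deg(\omega_C|_Z) \geq 1$, whereas on a non-contracted component $\deg(f^*\sheafO(3)|_Z) \geq 3$ outweighs $\deg(\omega_C|_Z) \geq -2$. Thus $L_f$ is relatively ample, and for a fixed $N \gg 0$ depending only on $e$ the bundle $L_f^{\otimes N}$ is very ample with vanishing higher cohomology and $h^0(L_f^{\otimes N}) = M+1$ independent of the map. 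Embedding $C$ into $\proj^M$ by the complete linear system $|L_f^{\otimes N}|$ and combining with $f$ yields a closed embedding $C \hookrightarrow \proj^M \times \proj^r$ whose image has a fixed Hilbert polynomial $P$ relative to $\sheafO(1,1)$. Standard semicontinuity and flattening-stratification arguments, applied to the universal subscheme of $\mathrm{Hilb}^P(\proj^M \times \proj^r)$, its relative dualizing sheaf, and the relevant pushforwards, then cut out a locally closed, hence quasi-projective, subscheme $J$ parametrising those $C \seq \proj^M \times \proj^r$ that are prestable, whose first projection is a nondegenerate closed embedding by the complete system of $\sheafO_C(1,0)$, which satisfy $\sheafO_C(1,0) \cong \omega_C^{\otimes N} \otimes \mathrm{pr}_2^* \sheafO(3N)$, and whose second projection is a stable map of degree $e$. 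By construction $J$ carries a $G := PGL(M+1)$-action, $[J/G]$ is the stack of degree-$e$ stable maps to $\proj^r$, and $F_e$ is its sheafification.

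It remains to produce $J/G$ as a projective scheme. I would embed $\mathrm{Hilb}^P(\proj^M \times \proj^r)$, $G$-equivariantly, into a projective space with a linear $SL(M+1)$-action (via Grothendieck's construction followed by the Plücker embedding) for a suitable linearization, show that every point of $J$ is GIT-stable, and conclude that the GIT quotient is projective, that $T := J/G$ exists as a quasi-projective scheme open inside it, and that $[J/G] \to T$ realises $T$ as a coarse moduli space for $F_e$. Separately I would check the valuative criterion for $F_e$ directly: this is stable reduction for maps --- a family of stable maps over $\operatorname{Spec} K$, with $K$ the fraction field of a discrete valuation ring $R$, extends, after a finite extension of $R$, uniquely to a family over $\operatorname{Spec} R$, by taking the closure of the image in $\proj^r$, applying semistable reduction to the total space of the source curve, and contracting the components that violate condition (3). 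Properness then forces the open immersion of $T$ into the projective GIT quotient to have closed image, so $T$ is projective over $\C$; pulling back along $S \to \operatorname{Spec}\C$ and restricting to maps with image in $X$ finally gives $T(X/S,d)$, projective over $S$.

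The hard part will be the GIT-stability verification: proving, by the Hilbert--Mumford numerical criterion, that for every stable map $f:C\to\proj^r$ the associated Hilbert point of the rigidified curve $C \seq \proj^M \times \proj^r$ is stable. This demands careful estimates on the weights with which one-parameter subgroups of $SL(M+1)$ act on the $N$-th Hilbert point, exploiting that the embedding into $\proj^M$ is by the complete linear system of $\omega_C^{\otimes N} \otimes f^*\sheafO(3N)$ with $N$ large; and it is precisely this step that simultaneously yields that $T$ is a scheme and that it is projective. The valuative criterion is also genuine geometric input, though it is the familiar stable-reduction argument, while the initial reductions (base change along $S \to \operatorname{Spec}\C$, passage to closed substacks) are routine in characteristic zero.
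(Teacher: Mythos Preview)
The paper does not give a proof of this proposition; it simply records that the result is proved in \cite[ch.~10]{arakol}. Your sketch is the standard Fulton--Pandharipande/Kontsevich construction (rigidify via $\omega_C \otimes f^*\mathcal{O}(3)$, pass to a locally closed subscheme of a Hilbert scheme of $\proj^M \times \proj^r$, take a GIT quotient by $PGL(M+1)$, and invoke stable reduction for properness), and it is a correct outline of what such a reference contains. Note that in this paper's conventions a prestable curve has arithmetic genus zero, so the source curves are trees of $\proj^1$'s; this simplifies some of the numerical estimates (e.g.\ the degree count on components and the stable-reduction step) but does not alter the architecture you describe.
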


We would like to develop some results about the deformations theory of maps. In the characteristic $p$ case, the required results appear in \cite{bogomolov}, but the proof essentially remains true in characteristic zero (and in fact is slightly simpler). We
repeat the results here for lack of a suitable reference. 

 Let $X$ be
a K3 surface and let $f: C \to X$ be a \emph{generically unramified} rational stable map, i.e. a morphism
such that $f$ is unramified when restricted to the generic points of the irreducible components of $C$. In characteristic zero,
this simply means that for each component $C_k$ of $C$, $f(C_k)$ is not a point.
We define $T^i_C=\mathrm{Ext}^i(\Omega^1_C,\mathcal{O}_C)$. Note that if $C$ is a \emph{smooth} rational curve then we
have $T^i_C=H^i(C,T_C)$ where $T_C$ is the tangent bundle, and we also have a short exact sequence
$0 \to T_C \to f^*(T_X) \to N_f \to 0$, where $N_f$ is the normal sheaf. In particular
$$ \chi(T_C) +\chi(N_f)=\chi(f^*T_X).$$ We will now
show that the above equation can be generalised to the case of a generically unramified rational stable map $f: C \to X$. For such a map
$f$, the complex $\R \mathcal{H}om_{\mathcal{O}_C}(\Omega_f^{\bullet}, \mathcal{O}_C)$ is supported in degree one, where
$\Omega_f^{\bullet}$ is the complex $df^t: f^* \Omega_X \to \Omega_C$ supported in degrees $-1$ and $0$, see 
\cite[Lemma 12]{bogomolov}. The degree one term of $\R \mathcal{H}om_{\mathcal{O}_C}(\Omega_f^{\bullet}, \mathcal{O}_C)$ is defined
to be the normal sheaf $N_f$. We warn the reader that this is
\emph{not} the same sheaf as the cokernel of the injective map $T_C \to f^*(T_X)$ which is sometimes defined to be
normal sheaf of $f$. In particular, $N_f$ for us is locally free when $f$ is unramified, but the latter
sheaf is never locally free for singular $C$. 

\begin{lem} \label{unram-lem}
Let $f: C \to X$ be a generically unramified rational stable map to a K3 surface. Then $ \dim T^0_C-\dim T^1_C +\chi(N_f)=\chi(f^*T_X)$.
\end{lem}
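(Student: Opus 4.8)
The plan is to extract the identity from a single distinguished triangle, using as input the fact quoted above from \cite{bogomolov} that $\R\mathcal{H}om_{\mathcal{O}_C}(\Omega_f^{\bullet},\mathcal{O}_C)$ is a sheaf placed in degree one, namely $N_f$.

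First I would attach to the two-term complex $\Omega_f^{\bullet}=[\,f^*\Omega_X\to\Omega_C\,]$ (with $f^*\Omega_X$ in degree $-1$) its brutal filtration: $\Omega_C$ sitting in degree $0$ is a subcomplex, with quotient $(f^*\Omega_X)[1]$, so that there is a distinguished triangle
\[
\Omega_C\longrightarrow \Omega_f^{\bullet}\longrightarrow (f^*\Omega_X)[1]\longrightarrow \Omega_C[1].
\]
Applying the contravariant triangulated functor $\R\mathcal{H}om_{\mathcal{O}_C}(-,\mathcal{O}_C)$ and using that $\Omega_X$ is locally free on the smooth surface $X$, so that $\R\mathcal{H}om_{\mathcal{O}_C}(f^*\Omega_X,\mathcal{O}_C)=f^*T_X$, one obtains a distinguished triangle
\[
(f^*T_X)[-1]\longrightarrow \R\mathcal{H}om_{\mathcal{O}_C}(\Omega_f^{\bullet},\mathcal{O}_C)\longrightarrow \R\mathcal{H}om_{\mathcal{O}_C}(\Omega_C,\mathcal{O}_C)\longrightarrow f^*T_X,
\]
whose middle term is $N_f[-1]$ by the cited lemma.

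I would then apply $\R\Gamma(C,-)$ and take Euler characteristics, which is legitimate since $C$ is proper and all the groups are finite-dimensional; additivity of $\chi$ along the long exact hypercohomology sequence of the triangle gives
\[
-\chi(N_f)=-\chi(f^*T_X)+\chi\big(\R\mathcal{H}om_{\mathcal{O}_C}(\Omega_C,\mathcal{O}_C)\big),
\]
because $\mathbb{H}^i\big(C,(f^*T_X)[-1]\big)=H^{i-1}(C,f^*T_X)$ and similarly for $N_f[-1]$. It remains to evaluate $\chi\big(\R\mathcal{H}om_{\mathcal{O}_C}(\Omega_C,\mathcal{O}_C)\big)=\sum_i(-1)^i\dim T^i_C$. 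Here I would note that for a reduced nodal curve $C$ the sheaves $\mathcal{E}xt^q_{\mathcal{O}_C}(\Omega_C,\mathcal{O}_C)$ vanish for $q\geq 2$ (a node is a hypersurface singularity) while $\mathcal{E}xt^1_{\mathcal{O}_C}(\Omega_C,\mathcal{O}_C)$ is a skyscraper supported at the nodes; plugging this into the local-to-global spectral sequence $\mathbb{H}^p\big(C,\mathcal{E}xt^q_{\mathcal{O}_C}(\Omega_C,\mathcal{O}_C)\big)\Rightarrow T^{p+q}_C$ and using $\dim C=1$ shows $T^i_C=0$ for $i\geq 2$, so the sum is $\dim T^0_C-\dim T^1_C$. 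Substituting this into the displayed equality and rearranging yields exactly $\dim T^0_C-\dim T^1_C+\chi(N_f)=\chi(f^*T_X)$.

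The real content is concentrated in the quoted statement of \cite{bogomolov}, which is where the hypothesis that $f$ is generically unramified is used (it is what forces $\R\mathcal{H}om_{\mathcal{O}_C}(\Omega_f^{\bullet},\mathcal{O}_C)$ to be a shifted sheaf rather than a genuine two-term complex); given that, the argument is a purely formal d\'evissage. The only independent point to verify is the vanishing $T^i_C=0$ for $i\geq 2$, and the one place to be careful is the bookkeeping of shifts and signs when dualizing the triangle and passing to Euler characteristics.
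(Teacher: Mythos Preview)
Your proof is correct. Mathematically it covers the same ground as the paper's argument, but the packaging is different enough to be worth a comparison.

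The paper works at the sheaf level: it invokes from \cite[Lemma~12]{bogomolov} two explicit short exact sequences of sheaves,
\[
0\to\mathcal{H}om(\Omega_C,\mathcal{O}_C)\to\mathcal{H}om(f^*\Omega_X,\mathcal{O}_C)\to Q\to 0,\qquad
0\to Q\to N_f\to\mathcal{E}xt^1(\Omega_C,\mathcal{O}_C)\to 0,
\]
and then runs the local-to-global spectral sequence separately for $\Omega_C$ and for $f^*\Omega_X$ to convert the sheaf Euler characteristics into the global quantities $\dim T^i_C$ and $\chi(f^*T_X)$. Your distinguished triangle
\[
(f^*T_X)[-1]\longrightarrow N_f[-1]\longrightarrow \R\mathcal{H}om(\Omega_C,\mathcal{O}_C)\longrightarrow f^*T_X
\]
is precisely the derived-category statement whose long exact sequence of cohomology sheaves \emph{is} the concatenation of those two short exact sequences. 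By applying $\R\Gamma$ directly you bypass the explicit unpacking via the local-to-global spectral sequence; the spectral-sequence step reappears only where it is genuinely needed, namely to check $T^i_C=0$ for $i\ge 2$. The paper's approach makes the sheaf-level structure more visible (and in particular isolates the injectivity $\mathcal{H}om(\Omega_C,\mathcal{O}_C)\hookrightarrow f^*T_X$, which is used again in the next proposition), while yours is more economical for the Euler-characteristic identity itself. Both rest on the same input from \cite{bogomolov}.
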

\begin{proof}
We use the calculations of \cite[Lemma 12]{bogomolov} for a generically
unramified morphism. In particular we have $\mathcal{E}xt^q_{\mathcal{O}_C}(f^* \Omega^1_X,\mathcal{O}_C)=0$
for $q >0$ and $\mathcal{E}xt^q(\Omega^1_C,\mathcal{O}_C)=0$ for $q>1$.
Now look at the local to global spectral sequence
$$H^p(C,\mathcal{E}xt^q(\Omega^1_C,\mathcal{O}_C)) \Rightarrow \mathrm{Ext}^{p+q}(\Omega^1_C,\mathcal{O}_C) .$$
This sequence consists of only four terms, namely $E^{0,0}=H^0(C,\mathcal{H}om(\Omega^1_C,\mathcal{O}_C))$,
$E^{0,1}=H^0(C,\mathcal{E}xt^1(\Omega^1_C,\mathcal{O}_C))$, $E^{1,0}=H^1(C,\mathcal{H}om(\Omega^1_C,\mathcal{O}_C))$
and $E^{1,1}=H^1(C,\mathcal{E}xt^1(\Omega^1_C,\mathcal{O}_C))$. In particular, all
differentials on the $E_2$ level are trivial. Hence we get $H^1(C,\mathcal{E}xt^1(\Omega^1_C,\mathcal{O}_C)) \simeq \mathrm{Ext}^{2}(\Omega^1_C,\mathcal{O}_C)=0$,
since $\Omega^1_C$ admits a locally free resolution $0 \to \mathcal{E}_1 \to \mathcal{E}_0 \to \Omega^1_C \to 0$,
(see \cite[Lemma 12]{bogomolov}). Next, we get $Hom(\Omega^1_C,\mathcal{O}_C) \simeq H^0(C,\mathcal{H}om(\Omega^1_C,\mathcal{O}_C))$
and $\mathrm{Ext}^1(\Omega^1_C,\mathcal{O}_C)) \simeq H^0(C,\mathcal{E}xt^1(\Omega^1_C,\mathcal{O}_C)) \oplus H^1(C,\mathcal{H}om(\Omega^1_C,\mathcal{O}_C))$
where the last isomorphism is non-canonical. 

Now look at the local to global spectral sequence
$$H^p(C,\mathcal{E}xt^q(f^*\Omega^1_X,\mathcal{O}_C)) \Rightarrow \mathrm{Ext}^{p+q}(f^*\Omega^1_X,\mathcal{O}_C) .$$
This sequence has only two terms, $H^0(C,\mathcal{H}om(f^* \Omega^1_X,\mathcal{O}_C))$
and $H^1(C,\mathcal{H}om(f^* \Omega^1_X,\mathcal{O}_C))$. Hence we see that
$\mathrm{Hom}(f^*\Omega^1_X,\mathcal{O}_C)) \simeq H^0(C,\mathcal{H}om(f^*\Omega^1_X,\mathcal{O}_C))$
and $\mathrm{Ext}^{1}(f^*\Omega^1_X,\mathcal{O}_C))\simeq H^1(C,\mathcal{H}om(f^*\Omega^1_X,\mathcal{O}_C))$.
Further, from \cite[Lemma 12]{bogomolov}, we have the following
two short exact sequences of sheaves on $C$:
\begin{equation} \label{bog-ses1}
 0 \to \mathcal{H}om(\Omega_C^1,\mathcal{O}_C) \to \mathcal{H}om(f^*\Omega^1_X, \mathcal{O}_C)
\to \mathcal{H}om(f^*\Omega^1_X,\mathcal{O}_C) / \mathcal{H}om(\Omega_C^1,\mathcal{O}_C) \to 0
\end{equation}
and 
\begin{equation} \label{bog-ses2}
0 \to \mathcal{H}om(f^*\Omega^1_X,\mathcal{O}_C) / \mathcal{H}om(\Omega_C^1,\mathcal{O}_C)
\to N_f \to \mathcal{E}xt^1(\Omega^1_C,\mathcal{O}_C) \to 0.
\end{equation} 
The result now follows from the additivity of the Euler characteristic $\chi$ and the above short exact sequences.
\end{proof}

We now wish to describe the deformation theory of a generically unramified stable map to a K3 surface.
As above let $f: C \to X$ be a morphism, and let $Def(f)$ be the functor of deformations
of $f$, see \cite{ran-maps}. Here we are allowing both the domain $C$ and the target $X$ to deform. 
Denote by $T^0_f$, $T^1_f$, and $T^2_f$ the complex
vector spaces parametrising infinitesimal automorphisms, deformations and obstructions
respectively. Let $Def(X)$ be the 20 dimensional,
universal deformation space of the K3 surface $X$.
 
\begin{prop} \label{basic-est}
Assume $f: C \to X$ is a generically unramified stable map to a smooth K3 surface
and let $N_f$ be the normal sheaf to $f$. Then
$T^0_f=0$. Further $\dim T^1_f-\dim T^2_f=\chi(N_f)+\dim(Def(X))$. 
\end{prop}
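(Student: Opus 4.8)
The plan is to use the standard machinery relating deformations of a map $f: C \to X$ to cohomology of a suitable complex, together with the key input from Lemma~\ref{unram-lem}. Since we allow both $C$ and $X$ to deform, the governing object is the relative deformation theory of $f$, which by the general theory of \cite{ran-maps} fits into an exact triangle relating $Def(f)$, $Def(C)$ (deformations of the domain curve), $Def(X)$, and the deformations of $f$ with $C$ and $X$ fixed. Concretely, there is a long exact sequence
\[
0 \to T^0_f \to T^0_C \to H^0(N_f) \oplus (\text{something}) \to T^1_f \to \dots
\]
but the cleanest bookkeeping comes from observing that the tangent-obstruction theory of $Def(f)$ is computed by a two-term hyperext complex whose Euler characteristic is additive. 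First I would set up the three-term exact sequence of deformation functors
\[
Def(C/X) \to Def(f) \to Def(C) \times Def(X),
\]
where $Def(C/X)$ denotes deformations of $f$ with target and source fixed, governed by $H^\bullet(C, N_f)$ (this uses the definition of $N_f$ as the degree-one term of $\mathbf{R}\mathcal{H}om(\Omega_f^\bullet, \mathcal{O}_C)$ from the discussion preceding Lemma~\ref{unram-lem}), $Def(C)$ is governed by $T^\bullet_C = \mathrm{Ext}^\bullet(\Omega^1_C, \mathcal{O}_C)$, and $Def(X)$ is unobstructed of dimension $20$ since $X$ is a K3 surface.

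The next step is to extract the numerical identity. Taking the associated long exact sequence of tangent and obstruction spaces and using additivity of dimension along it gives
\[
\dim T^0_f - \dim T^1_f + \dim T^2_f = \big(\dim T^0_C - \dim T^1_C\big) + \chi(N_f) + \dim Def(X),
\]
where I have used that $Def(X)$ contributes $\dim Def(X)$ to $T^1$ and $0$ to $T^0$ and $T^2$, and that the contribution of $Def(C/X)$ is exactly $\chi(N_f) = \dim H^0(N_f) - \dim H^1(N_f)$ (here $N_f$ is a sheaf on a curve, so higher cohomology vanishes beyond degree one). Now invoke Lemma~\ref{unram-lem}, which gives $\dim T^0_C - \dim T^1_C + \chi(N_f) = \chi(f^* T_X)$; but I will instead want the cleaner rearrangement. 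Combining the displayed identity with Lemma~\ref{unram-lem} and the fact that $\dim Def(X) = 20 = \chi(T_X)$ for a K3 surface, and that for a K3 surface $\chi(f^*T_X) = \chi(\mathcal{O}_C) \cdot \chi(T_X)/\dots$ — rather, more directly: $f^*T_X$ on the genus-zero curve $C$ has $\chi(f^*T_X) = \deg f^*T_X + 2\,\mathrm{rk} = 0 + 4 = \dots$; in any case the bookkeeping reduces the identity to $\dim T^1_f - \dim T^2_f = \chi(N_f) + \dim Def(X)$ once the contributions from $T^\bullet_C$ are absorbed. I should be careful: the point is that $Def(C)$ for a prestable genus-zero curve $C$ is itself governed by $T^\bullet_C$ and these spaces appear in the exact sequence, but when one passes to $Def(f)$ the domain-deformation part gets reorganized so that its net numerical contribution is already encoded, via Lemma~\ref{unram-lem}, inside $\chi(N_f)$ and $\chi(f^*T_X)$.

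For the vanishing $T^0_f = 0$: a nonzero infinitesimal automorphism of $f$ would in particular induce, after projecting, either an infinitesimal automorphism of $X$ (impossible, as $H^0(X, T_X) = 0$ for a K3 surface) or a vertical automorphism of $C$ fixing the map $f$. The latter would be a section of $T_C$ (on the normalization, a vector field on $\mathbb{P}^1$) whose image in $f^*T_X$ vanishes; since $f$ is generically unramified, $T_C \to f^*T_X$ is generically injective, forcing the vector field to vanish. Stability of the map then kills any remaining automorphisms concentrated on contracted components (there are none, as $f$ is generically unramified so no component is contracted, and the three-pointed-ness condition in the definition of stable map handles the rest). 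Hence $T^0_f = 0$.

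The main obstacle will be getting the exact sequence of deformation functors precisely right — in particular, correctly identifying which tangent/obstruction group of $Def(C/X)$, $Def(C)$, $Def(X)$ maps to which, and verifying that the connecting maps behave so that Euler characteristics simply add (i.e.\ that there is no "leftover" term from, say, $H^2$ of something or from the failure of the sequence to be short exact). The cleanest route is probably to avoid functors entirely and instead work with the single hyperext group $T^i_f = \mathbf{Ext}^i\big(\,[f^*\Omega^1_X \to \Omega^1_C]\,, \mathcal{O}_C\,)$ together with the distinguished triangle coming from the cone of $f^*\Omega^1_X \to \Omega^1_C$, which directly yields the additivity $\dim T^0_f - \dim T^1_f + \dim T^2_f = -\chi(\Omega^1_C, \mathcal{O}_C) + \chi(f^*\Omega^1_X, \mathcal{O}_C)$, and then the K3 condition $\chi(f^*\Omega^1_X,\mathcal{O}_C) = \chi(f^*T_X)$ plus Lemma~\ref{unram-lem} and $\dim Def(X) = 20$ finish it. Once the right complex is in hand the computation is routine Riemann--Roch on a nodal genus-zero curve.
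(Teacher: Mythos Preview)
Your overall strategy matches the paper's: set up a long exact sequence relating $T^i_f$ to $T^i_C$, $T^i_X$, and cohomology on $C$, take alternating sums of dimensions, and invoke Lemma~\ref{unram-lem}. But your execution contains a concrete error that prevents the argument from going through as written.

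You assert that ``$Def(C/X)$ denotes deformations of $f$ with target and source fixed, governed by $H^\bullet(C, N_f)$''. This is wrong: with both $C$ and $X$ held fixed, first-order deformations of the map $f$ are sections of $f^*T_X$, not of $N_f$. The normal sheaf $N_f$ governs deformations of $f$ with $X$ fixed but $C$ \emph{allowed to vary}. This mislabeling propagates into your displayed numerical identity, which does not reduce to the claim (check the signs: your formula gives $\dim T^1_f - \dim T^2_f = -(\dim T^0_C - \dim T^1_C) - \chi(N_f) - \dim Def(X)$, not the desired expression). Your alternative at the end, computing $T^i_f$ as $\mathbf{Ext}^i([f^*\Omega^1_X \to \Omega^1_C], \mathcal{O}_C)$, would only see deformations with $X$ fixed; the contribution of $Def(X)$ has to enter somewhere, and once you put it in you are back to the sequence below.

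The paper's proof uses Ran's long exact sequence in the form
\[
0 \to T^0_f \to T^0_C \to H^0(C,f^*T_X) \to T^1_f \to T^1_C \oplus T^1_X \to H^1(C,f^*T_X) \to T^2_f \to 0,
\]
where the identification $\mathrm{Ext}^i_f(\Omega^1_X,\mathcal{O}_C) \simeq H^i(C,f^*T_X)$ uses smoothness of $X$ (so $L^q f^*\Omega^1_X = 0$ for $q>0$), and the terms $T^0_X$, $T^2_X$, $\mathrm{Ext}^2(\Omega^1_C,\mathcal{O}_C)$ have been dropped because they vanish (the first two since $X$ is K3, the last by the proof of Lemma~\ref{unram-lem}). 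Injectivity of $T^0_C \to H^0(f^*T_X)$, which follows from the sheaf sequence~(\ref{bog-ses1}), gives $T^0_f = 0$; this is exactly your geometric argument for the vanishing, phrased sheaf-theoretically. Taking the alternating sum of dimensions yields
\[
\dim T^0_C - \dim T^1_C + \dim T^1_f - \dim T^2_f + \dim T^1_X = \chi(f^*T_X),
\]
and substituting Lemma~\ref{unram-lem} (which says $\dim T^0_C - \dim T^1_C = \chi(f^*T_X) - \chi(N_f)$) gives the result. The key point you were circling around but never landed on is that the ``middle'' term in Ran's sequence is $H^\ast(f^*T_X)$, and Lemma~\ref{unram-lem} is precisely what converts $\chi(f^*T_X)$ into $\chi(N_f)$ plus the domain contribution.
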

\begin{proof}
We follow the notation of \cite{ran-maps}.
Since $X$ is assumed smooth
we have that $f^*\Omega^1_X$ is locally free and so $L^q f^*\Omega^1_X=0$
for $q >0$ where $L^q f^*$ denotes the left-derived functors of $f^*$.
Hence the spectral sequence $\text{Ext}^p_C(L^q f^*\Omega^1_X,\mathcal{O}_C) \Rightarrow \text{Ext}_f^i(\Omega^1_X,\mathcal{O}_C)$
gives $\text{Ext}_f^0(\Omega^1_X,\mathcal{O}_C)=H^0(C,f^*T_X)$ and $\text{Ext}_f^1(\Omega^1_X,\mathcal{O}_C)=H^1(C,f^*T_X)$.
Write $T^i_C$ for $\text{Ext}^i(\Omega^1_C,\mathcal{O}_C)$ and $T^i_X$ for $H^i(X,T_X)$.
Then, by \cite[Sect.2]{ran-maps} we get a long exact sequence
\begin{equation} \label{def-ses}
0 \to T^0_f \to T^0_C \to H^0(f^*T_X) 
 \to T^1_f \to T^1_C \oplus T^1_X \to H^1(f^*T_X) \to T^2_f \to 0
\end{equation}
 since $T^0_X$ and $T^2_X$ vanish
for a K3 surface and $\text{Ext}^2(\Omega^1_C,\mathcal{O}_C)=0$ from the proof of Lemma \ref{unram-lem}.
The sequence \ref{bog-ses1} from the proof of the previous lemma gives us that the map
$$ T^0_C \to H^0(C,f^*T_X)$$
is injective and hence $T^0_f=0$, so that $Def(f)$
admits no infinitesimal automorphisms. Thus taking the Euler characteristic of 
\ref{def-ses} shows $$\dim T^0_C-\dim T^1_C+\dim T_f^1-\dim T_f^2+\dim T^1_X=\chi(f^*T_X).$$
Combining this with Lemma \ref{unram-lem} gives the result, noting that $\dim(Def(X))=\dim T^1_X$.
\end{proof}

We now let $\mathcal{X} \to Def(X)$ be the universal deformation of a K3 surface,
and let $Def(X,L)$ denote deformations of the \emph{polarised} K3 surface $(X,L)$.
Then $Def(X,L) \seq Def(X)$ has codimension one. Fix some Hodge class $B \in \Lambda$,
where $\Lambda$ is the K3 lattice. As in the proof of \cite[Thm.19]{bogomolov} or from \cite[Sect.8]{vist-ab}
we can construct a stack $T(\mathcal{X} / Def(X),B)$, proper over $Def(X)$, 
parametrising families of stable curves $f: C \to Y$ with $Y \in Def(X)$ and
such that $f_*(C)=B \in \Lambda$.

In the next lemma we assume that $f$ is unramified in the stronger sense (and not merely
generically unramified).
\begin{cor} \label{unram-surj}
Let $f:C \to X$ be an unramified, stable map with $f_*(C)=B \in \Lambda$. Then every irreducible component
$I$ of $T(\mathcal{X} / Def(X),B)$ containing $f$ surjects onto $Def(X,L)$.
\end{cor}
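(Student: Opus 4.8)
The plan is to estimate $\dim I$ from below using the deformation theory already in place, to estimate the dimension of the image of $I$ under the structure morphism $\pi\colon T(\mathcal{X}/Def(X),B)\to Def(X)$ from above, and then to play the two estimates against each other using that $\pi$ is proper and that $Def(X,L)$ is irreducible. Recall that $Def(X,L)\seq Def(X)$ has codimension one and is a smooth connected germ, hence irreducible of dimension $19$. If $g\colon D\to Y$ is any stable map occurring in $T(\mathcal{X}/Def(X),B)$, then $B$ is the class of the effective curve $g(D)$ on $Y$, so $B$ stays of type $(1,1)$ along the image of $\pi$; as $B$ is a multiple of the polarisation class this forces the image of $\pi$ into $Def(X,L)$. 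Since $I$ is an irreducible component it is closed, so by properness $\pi(I)$ is a closed irreducible subset of $Def(X,L)$, and it will be enough to prove $\dim\pi(I)\geq 19$.

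For the lower bound on $\dim I$, I would compute $\chi(N_f)=-1$ and feed it into Proposition \ref{basic-est}. Lemma \ref{unram-lem} gives $\chi(N_f)=\chi(f^*T_X)-(\dim T^0_C-\dim T^1_C)$; here $\chi(f^*T_X)=(c_1(T_X)\cdot B)+2\chi(\mathcal{O}_C)=0+2=2$, since $X$ is a K3 surface and $C$ is prestable so that $\chi(\mathcal{O}_C)=1$, while $\dim T^0_C-\dim T^1_C=3$ because the tangent complex of a tree of $\proj^1$'s has Euler characteristic $3-3p_a(C)=3$. Hence Proposition \ref{basic-est} yields $\dim T^1_f-\dim T^2_f=\chi(N_f)+\dim(Def(X))=-1+20=19$. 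Since $T^0_f=0$, near $f$ the stack $T(\mathcal{X}/Def(X),B)$ has no stackiness and admits a Kuranishi description as the zero locus of a section of a rank-$\dim T^2_f$ bundle over a smooth germ of dimension $\dim T^1_f$; consequently every irreducible component through $f$, and in particular $I$, has $\dim I\geq\dim T^1_f-\dim T^2_f=19$.

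It then suffices to show $\dim\pi(I)\geq\dim I$, which I would obtain by proving that $f$ is an isolated point of the fibre $\pi^{-1}(\pi(f))$, i.e. of the space of stable maps to the fixed surface $X$ in the class $B$. This is the one step that uses the hypothesis that $f$ be unramified rather than merely generically unramified. The Zariski tangent space of this fibre at $f$ is $\mathrm{Ext}^1(\Omega_f^{\bullet},\mathcal{O}_C)=H^0(C,N_f)$, the complex $\R\mathcal{H}om_{\mathcal{O}_C}(\Omega_f^{\bullet},\mathcal{O}_C)$ being concentrated in degree one because $f$ is unramified; so it is enough to check $H^0(C,N_f)=0$. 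Since $f$ is unramified, $N_f$ is a line bundle, and the exact sequences \ref{bog-ses1} and \ref{bog-ses2} show that $\deg(N_f|_{C_i})=\deg(\omega_C|_{C_i})$ for each component $C_i$ of $C$; as the Picard group of the genus-zero curve $C$ is discrete, this forces $N_f\cong\omega_C$. Serre duality then gives $h^1(C,N_f)=h^1(C,\omega_C)=h^0(C,\mathcal{O}_C)=1$, whence $h^0(C,N_f)=\chi(N_f)+h^1(C,N_f)=-1+1=0$, so $\dim_f\pi^{-1}(\pi(f))=0$.

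Combining the estimates, the fibre-dimension inequality applied to $\pi|_I$ together with $\dim_f\big((\pi|_I)^{-1}(\pi(f))\big)\leq\dim_f\pi^{-1}(\pi(f))=0$ gives $\dim I\leq\dim\pi(I)$, hence $\dim\pi(I)\geq 19$; since $\pi(I)$ is a closed irreducible subset of the irreducible $19$-dimensional $Def(X,L)$, it equals $Def(X,L)$. I expect the rigidity step to be the crux: establishing $H^0(C,N_f)=0$, equivalently that $N_f$ has the same multidegree as $\omega_C$, is where one genuinely exploits the strong unramifiedness hypothesis and the structure of the normal sheaf on a reducible nodal domain; the dimension count proper is bookkeeping with Proposition \ref{basic-est} and Lemma \ref{unram-lem}.
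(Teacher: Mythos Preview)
Your argument is correct and follows the same overall strategy as the paper: bound $\dim I$ from below via Proposition \ref{basic-est}, show the fibre of $\pi$ over $X$ is zero-dimensional at $f$, and conclude by properness and irreducibility of $Def(X,L)$. The paper simply cites \cite[Lemma 14]{bogomolov} for both $\chi(N_f)=-1$ and the rigidity of $f$, whereas you unpack these computations explicitly; your identification $N_f\cong\omega_C$ (whence $h^0(N_f)=0$ by Serre duality) is exactly the content of that lemma.

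One small point of care: invoking the short exact sequences \eqref{bog-ses1} and \eqref{bog-ses2} to read off the multidegree of $N_f$ is a little loose, since the sheaves $\mathcal{H}om(\Omega_C,\mathcal{O}_C)$ and the quotient $Q$ are not locally free at the nodes and restriction to a component $C_i$ is only right exact. A cleaner justification is to note that, because $f$ is unramified, it is a local immersion, so near a node $p\in C_i\cap C_j$ one has $N_f|_{C_i}\cong N_{f|_{C_i}}\otimes\mathcal{O}_{C_i}(p)$; since $K_X=0$ gives $\deg N_{f|_{C_i}}=-2$, this yields $\deg(N_f|_{C_i})=k_i-2=\deg(\omega_C|_{C_i})$, and then the discreteness of $\mathrm{Pic}(C)$ finishes the identification as you say.
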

\begin{proof}
The obstructions and deformation spaces of the functor $Def(f)$ correspond to
those of $T(\mathcal{X} / Def(X),B)$. Indeed an element of $Def(f)(S)$ over a pointed
analytic space $(S,0)$ consists
of flat families $g:\tilde{C} \to S$ and $h:\tilde{X} \to S$ and an $S$-morphism
$\tilde{f}: \tilde{C} \to \tilde{X}$ with fibre over $0 \in S$ corresponding
to $f: C \to X$. Under these circumstances, there is a small open analytic space
$T \seq S$ such that $\tilde{f}|_T$ is a deformation of stable maps (i.e.\ the stability
condition is an open condition). 
Hence Proposition \ref{basic-est} shows that $I$ has dimension at least $\chi(N_f)+\dim(Def(X))$,
c.f. \cite[Sect.\ 11]{har-defo}. Since $f$ is unramified, $\chi(N_f)=-1$, see \cite[Lemma 14]{bogomolov}.
Thus $\dim(I) \geq \dim(Def(X))-1=\dim(Def(X,L))$.

Since $f$ is an unramified morphism, the fibre of $T(\mathcal{X} / Def(X),B) \to Def(X)$ over $X$ is
zero dimensional at $f$, by \cite[Lemma 14]{bogomolov} or \cite[Lemma 2.6]{liedtke}. 
Note that the condition that $f$ is unramified everywhere automatically gives us that $f|_{C_k}$ is birational onto its image for each irreducible component
$C_k$ of $C$ by the Hurwitz theorem. 
Since $\pi: T(\mathcal{X} / Def(X),B) \to Def(X)$
is proper, $\pi(I)$ is a closed subset of $Def(X)$ with dimension equal to $\dim(Def(X,L))$.
Now, if $t: C_1 \to X_1 \in T(\mathcal{X} / Def(X),B)$ then $B=t_*(C_1) \in Pic(X_1) \seq H^2(X_1, \C)$ is
a $(1,1)$ class. Thus $\pi(I) \seq Def(X,L)$. Since $Def(X,L)$ is irreducible, we see 
that every irreducible component
$I$ of $T(\mathcal{X} / Def(X),B)$ surjects onto $Def(X,L)$ as required.  
\end{proof}

\section{The space $\overline{M}_{0,k}^g$} \label{sect:main}
Our first task is to study the space $\overline{M}_{0,k}^g$ defined in the introduction to this chapter. We will show that we gain some flexibility by looking at this space, rather than $M_{0,k}^g$. 
The space $M_{0,k}^g$ is a (dense) open subset of $\overline{M}_{0,k}^g$, because there is an open subset of $|k\mathcal{L}|$ parametrising irreducible curves with 
at worst nodes. Thus the space $M_{0,k}^g$ is irreducible precisely when $\overline{M}_{0,k}^g$ is irreducible. So, to study questions about the irreducibility of $M_{0,k}^g$,
we lose nothing by studying $\overline{M}_{0,k}^g$ instead.

We start by studying the points in $\overline{M}_{0,k}^g$.
\begin{lem}
 Let $(X,L)$ be a primitively polarized K3 surface of genus $g$, and $C \seq X$ a curve in $|kL|$ such that $(C,X,L) \in \overline{M}_{0,k}^g$. 
 Then the curve $C$ has rational components.
\end{lem}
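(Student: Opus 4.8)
The plan is to realise $C$ as the image of the central fibre of a degenerating family of genus-zero stable maps, and then to exploit the fact, already noted in the paper, that every irreducible component of a prestable curve is a smooth rational curve.

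First I would reduce to a one-parameter situation. Since $M_{0,k}^g$ is dense and open in $\overline{M}_{0,k}^g$, we may choose a smooth affine curve $B$, a point $0 \in B$, and a morphism $B \to \overline{M}_{0,k}^g$ sending $0$ to $(C,X,L)$ and sending a dense open $U \subseteq B$ into $M_{0,k}^g$. Pulling back the universal family over $W_g$ produces a family of K3 surfaces $\mathcal{X}_B \to B$, projective over $B$ with $\mathcal{X}_0 \simeq X$, a relative polarisation, and a $B$-flat subscheme $\mathcal{C}_B \subseteq \mathcal{X}_B$ (flat because its fibres are divisors in the fixed class $k\mathcal{L}$) with $\mathcal{C}_0 = C$ and with $\mathcal{C}_t$ an irreducible nodal rational curve for $t \in U$. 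Over $U$, after possibly shrinking $U$ and passing to a finite base change, a simultaneous normalisation gives a family of smooth rational curves together with a finite birational morphism onto $\mathcal{C}_U$; composing with $\mathcal{C}_U \hookrightarrow \mathcal{X}_U$ yields a family of genus-zero stable maps of degree $d := \deg_H(C)$, hence a morphism $U \to T(\mathcal{X}_B/B,d)$.

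Next I would invoke properness. By the Proposition recalled above, $T(\mathcal{X}_B/B,d)$ is proper over $B$, so after a finite base change $B' \to B$ ramified over $0$ this morphism extends over all of $B'$; concretely there is a stable map $f \colon \mathcal{D} \to \mathcal{X}_{B'}$ over $B'$ restricting over $U' := U \times_B B'$ to the normalisation maps above. Let $f_0 \colon D_0 \to X$ be the fibre over the point lying over $0$. Since $D_0$ is a prestable curve of arithmetic genus zero, it is a tree $D_1 \cup \dots \cup D_m$ of smooth rational curves, so the image $f_0(D_i)$ of each component is either a point or (by Lüroth) a rational irreducible curve. The proof then concludes once we know that $C_{\mathrm{red}} = \bigcup_i f_0(D_i)$, the union taken over those $i$ with $f_0|_{D_i}$ non-constant: then every irreducible component of $C$ equals some $f_0(D_i)$ and is therefore rational.

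To establish this last equality I would compare $0$-cycles. Over $U'$ the map $f$ is birational onto the reduced curve $\mathcal{C}_{U'}$, so the image cycle of $f$ equals $[\mathcal{C}_{U'}]$; the cycle map from the space of stable maps to the (relative) Chow variety of $\mathcal{X}_{B'}/B'$ is a morphism, so $f_{0*}[D_0]$ is the Chow-limit, as $t \to 0$, of the cycles $[\mathcal{C}_t]$. On the other hand $\mathcal{C}_{B'}$ is flat over $B'$, so it too defines a $B'$-point of the relative Chow variety, whose value at $0$ is $[\mathcal{C}_0] = [C]$. These two Chow sections agree over $U'$, hence agree at $0$ by separatedness of the Chow variety, so $f_{0*}[D_0] = [C]$ and therefore $\bigcup_i f_0(D_i) = \mathrm{Supp}\, f_{0*}[D_0] = \mathrm{Supp}\,[C] = C_{\mathrm{red}}$. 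The main obstacle is precisely this comparison: one must rule out the limiting genus-zero stable map "dropping" an irreducible component of $C$ (for instance, covering a single component with doubled multiplicity), and it is the equality of limit cycles that forbids this. A secondary technical point is that the whole construction must be carried out in families while the K3 surface itself varies over $B$, which is exactly why the relative space $T(\mathcal{X}_B/B,d)$ of the recalled Proposition, and not its absolute analogue, is the right tool.
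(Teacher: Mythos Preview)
Your argument is correct. The paper's own proof, however, is a one-line citation: it simply invokes the standard fact that a limit of rational curves has rational components (Koll\'ar, \emph{Rational Curves on Algebraic Varieties}, p.~103). What you have written is essentially a self-contained proof of that cited fact, carried out directly in the relative setting where the ambient K3 surface is allowed to move. The mechanism you use---extend the normalisation maps over the punctured base to a stable map over the whole base via properness of $T(\mathcal{X}_B/B,d)$, then identify the pushforward cycle of the central fibre with $[C]$ by comparing two sections of the relative Chow variety---is exactly the engine behind Koll\'ar's statement. So the two proofs are not genuinely different in spirit; yours simply unpacks the black box. The payoff of your version is that it makes transparent why no component of $C$ can be ``missed'' by the limiting stable map (your cycle-comparison step), and it handles the moving target $X$ explicitly rather than by implicit reduction to a fixed ambient variety; the payoff of the paper's version is brevity.
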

\begin{proof}
 This follows immediately from the fact that a limit of rational curves is still rational, see \cite[p.\ 103]{kollar}.
\end{proof}

\begin{mydef}
We will use the term rational curve to denote any one dimensional, proper scheme over $\C$ with rational components. We do not
assume either that $C$ is irreducible nor that $C$ is reduced.
\end{mydef}

Let $(X,L)$ be a primitively polarized K3 surface and $C \in |kL|$ an arbitrary rational curve. 
One may ask whether it is true that $(C,X,L)$ is represented by an element of $\overline{M}_{0,k}^g$.
We are not sure which conditions, if any, would be necessary for a rational curve $C$ to represent a point in $\overline{M}_{0,k}^g$, but we are able to give one simple 
sufficient condition which
is already quite useful. Denote by $T(X,kL)$ the moduli stack of genus zero stable maps $f: D \to X$ with $f_{*}(D) \in |kL|$. We say $C \in |kL|$
admits an unramified representative if there is an unramified map $f \in T(X,kL)$ with scheme theoretic image equal to $C$. 

\begin{prop}
 Let $(X,L)$ be a primitively polarized K3 surface of genus $g$ and $C \in |L|$ a rational curve admitting an unramified representative. 
 Then $(C,X,L) \in \overline{M}_{0,1}^g$.
\end{prop}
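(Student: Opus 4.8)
The plan is to produce a one-parameter family of polarized K3 surfaces, together with a flat family of curves in the relevant linear system, whose general member is an irreducible nodal rational curve and whose special member is $C$; then $(C,X,L)$ lies in the closure $\overline{M}_{0,1}^g$ by definition. The key is that $M_{0,1}^g$ is a (dense) open subscheme of $|\mathcal L|$-worth of curves, so it suffices to hit the nodal irreducible locus along a deformation. First I would pick an unramified stable map $f:D\to X$ with scheme-theoretic image $C$ and $f_*(D)\in|kL|=|L|$ (so $k=1$); such an $f$ exists by hypothesis. I would then work inside the stack $T(\mathcal X/Def(X),B)$ constructed before Corollary \ref{unram-surj}, with $B=[L]\in\Lambda$ the Hodge class of $L$, and let $I$ be an irreducible component of this stack containing $f$.

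The heart of the argument is Corollary \ref{unram-surj}: since $f$ is unramified, every component $I$ of $T(\mathcal X/Def(X),B)$ through $f$ surjects onto $Def(X,L)$, the codimension-one locus of polarized deformations, and moreover $\dim I\geq \dim Def(X,L)$ with the fibre of $\pi:I\to Def(X,L)$ over the point $[X,L]$ being zero-dimensional at $f$. Hence $\pi:I\to Def(X,L)$ is generically finite (indeed dominant). Next I would argue that for a very general polarized deformation $(X_t,L_t)$, the surface $X_t$ has Picard rank one with $Pic(X_t)=\mathbb Z\cdot L_t$; this is the standard fact that the locus of higher Picard rank in $Def(X,L)$ is a countable union of proper analytic subsets. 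Over such a very general point the image of any stable map parametrized by $I$ is an irreducible curve in $|L_t|$ (it cannot break into pieces in different classes since $Pic(X_t)$ is generated by $L_t$ and $L_t$ is primitive), and by the Yau--Zaslow count such curves are rational and, for a general point, nodal — alternatively one invokes the smoothness of $M_{0,1}^g$ and the fact that its fibres over $W_g$ are nonempty along this component. So a general point of $I$ maps to a point of $M_{0,1}^g$.

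Finally I would connect $f$ to such a general point by a path inside $I$: choose a curve (or just an irreducible base) through $f$ and a general point of $I$, pull back the universal family of stable maps, and compose the stable maps with the contraction/normalization to recover honest curves in the surfaces. Passing from the stack $T$ to the scheme-theoretic images gives a family of curves $\mathcal C\to B$ over an irreducible base $B$, flat after shrinking, with $\mathcal C_b\in|\mathcal L_b|$ for all $b$, with generic fibre an irreducible nodal rational curve on a Picard-rank-one K3 (hence a point of $M_{0,1}^g$), and with special fibre the scheme-theoretic image of $f$, which is $C$. Lifting to the Hilbert-scheme presentation $W_g$ via a local section (using the $PGL(N+1)$-bundle structure and the fact that $M_{0,1}^g\to\mathcal V_{0,1}^g$ is a quotient by the group action, so points lift étale-locally), this exhibits $(C,X,L)$ as a limit of points of $M_{0,1}^g$, i.e. $(C,X,L)\in\overline{M}_{0,1}^g$.

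The main obstacle I anticipate is the passage from the moduli stack of stable maps to the family of \emph{subschemes} of the $X_t$: one must check that the scheme-theoretic image of the universal stable map over $I$ (or over a well-chosen curve inside it) is flat over the base with Hilbert polynomial that of a curve in $|L_t|$, so that it defines a genuine morphism to $|\mathcal L|$. For the special fibre this requires knowing that the scheme-theoretic image of $f$ really is $C$ with its given (possibly nonreduced) structure and has the right arithmetic genus $g$ — this is where the unramifiedness of $f$ is used again, via $\chi(N_f)=-1$ and the identification of $N_f$ with a locally free sheaf — and for nearby fibres one needs semicontinuity plus the constancy of the intersection number with $\mathcal L$ to rule out jumps. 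A secondary technical point is ensuring the base $B$ can be taken irreducible and the chosen path stays within a single component $I$ on which the general member is nodal; this follows from irreducibility of $I$ together with the density of the nodal locus established by the dimension count $\dim I=\dim Def(X,L)$ combined with Proposition \ref{basic-est}.
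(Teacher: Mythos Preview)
Your approach is essentially the paper's: deform the unramified stable map $f$ along a component $I$ of $T(\mathcal{X}/Def(X),B)$ dominating $Def(X,L)$ via Corollary~\ref{unram-surj}, land over a generic K3 where the image curve is an irreducible nodal rational curve, and conclude $(C,X,L)$ is a limit of points of $M_{0,1}^g$. Two points, however, deserve sharpening.

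First, your justification for \emph{nodality} over the generic fibre is not quite right. The Yau--Zaslow formula counts rational curves under the assumption that they are nodal; it does not prove that they are. Likewise, invoking smoothness of $M_{0,1}^g$ is circular here, since you first need to know that the image of your deformed stable map lands in $M_{0,1}^g$. The correct input is Chen's theorem \cite{chen}: on a generic primitively polarized K3, every rational curve in the primitive class is irreducible and nodal. This is exactly what the paper cites.

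Second, the technical worry you flag about passing from a family of stable maps to a flat family of subschemes in $|\mathcal{L}|$ is genuine, and your sketch does not fully resolve it. The paper sidesteps the issue entirely: by \cite[I.6]{kollar} there is a morphism from the semi-normalization of $T(\mathcal{X}/W_g,\mathcal{L})$ to the Chow variety, hence to $|\mathcal{L}|$, sending a stable map to its image cycle. Continuity of this morphism immediately carries $f$ (with image $C$) into the closure of the image of the generic locus, which by Chen lies in $M_{0,1}^g$. This replaces your ad hoc flatness argument with a single clean citation.
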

\begin{proof}
From Corollary \ref{unram-surj},
$f$ can be deformed to a stable map on a generic K3 surface. However, a generic K3 surface
admits only irreducible, nodal curves in the primitive class by \cite{chen}. Further, there
is a morphism from the semi-normalization of the relative moduli space $T(\mathcal{X} / W_g, \mathcal{L})$ to
$\overline{M}_{0,1}^g$ by \cite[I.6]{kollar}, where we are using the notation of Section \ref{sect:const}. Hence $(C,X,L) \in \overline{M}_{0,1}^g$.
\end{proof}

\section{Elliptic K3 surfaces}
We start with the following definition:
\begin{mydef}
 Let $X$ be a surface and $C$ be a smooth curve over $\C$. A Weierstrass fibration is a flat and proper map $\pi: X \to C$
such that 
\begin{enumerate}
 \item Each fibre is an integral curve with arithmetic genus one.
\item The general fibre is smooth.
\item There is a section $S$ of $\pi$ which avoids all nodes and cusps of the fibres.
\end{enumerate}
Furthermore, the Weierstrass fibration is said to be in minimal form in the case that $X$ has only rational double points as singularities.
\end{mydef}

An \emph{elliptic K3 surface} is a K3 surface $X$, possibly with rational double points, together with a minimal Weierstrass fibration $\pi: X \to \proj^1$ with section $S$. 
Let $E$ be the class of a generic
fibre and consider the linear system $|S+gE|$.
\begin{lem}
 Let $X$ be a smooth, elliptic K3 surface with fibre $E$ and section $S$. Then the line bundle $S+gE$ is nef and big for $g > 2$, 
and further in this case every divisor $D$ in the linear system $|S+gE|$ takes the form
$S + \sum_{i=1}^g F_i$ where $F_i$ are fibres of $\pi$ (we allow repetitions of the fibres).
\end{lem}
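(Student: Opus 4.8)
The plan is to analyze the linear system $|S+gE|$ on a smooth elliptic K3 surface $X$ using the geometry of the fibration $\pi: X \to \proj^1$. First I would verify that $D := S+gE$ is nef and big for $g>2$. Nefness follows by intersecting with curves: $(D \cdot E) = (S \cdot E) = 1 > 0$, $(D \cdot S) = (S \cdot S) + g(S\cdot E) = -2 + g = g-2 > 0$, and for any other irreducible curve $\Gamma$ not equal to a fibre or the section, one checks $(D\cdot \Gamma) \geq 0$ directly since $E$ is nef and $(S\cdot\Gamma)\geq 0$. Bigness follows from $(D\cdot D) = (S\cdot S) + 2g(S\cdot E) = -2 + 2g = 2g-2 > 0$ for $g>2$, since a nef divisor with positive self-intersection on a surface is big.

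The heart of the lemma is the structural statement about divisors in $|S+gE|$. The key computation is $h^0(X, \sheafO_X(S+gE))$. By Riemann--Roch on the K3 surface, $\chi(\sheafO_X(D)) = 2 + \frac{1}{2}(D\cdot D) = 2 + (g-1) = g+1$; and since $D$ is nef and big, Kawamata--Viehweg (or the fact that $H^1$ and $H^2$ vanish for big-and-nef on a K3, $H^2$ by Serre duality and $D$ effective nontrivial) gives $h^0(X, \sheafO_X(D)) = g+1$. On the other hand, consider the divisors of the form $S + \sum_{i=1}^g F_i$ where each $F_i$ is a fibre; since $\mathrm{Pic}$-linear equivalence among fibres is all of $|E| \cong \proj^1$, the set of such divisors forms the image of $\mathrm{Sym}^g \proj^1 \cong \proj^g$ inside $|D| \cong \proj^g$. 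This gives a $g$-dimensional family of divisors of the asserted form sitting inside the $g$-dimensional linear system. I would then argue this inclusion $\proj^g \hookrightarrow \proj^g$ is surjective: the map $\mathrm{Sym}^g |E| \to |S+gE|$, $(F_1,\dots,F_g) \mapsto S + \sum F_i$, is a morphism of projective varieties of the same dimension $g$; it suffices to show it is finite (equivalently, has finite fibres), since a finite surjection between irreducible projective varieties of equal dimension onto a normal target... but more simply: the map is injective on the level of the support data away from $S$, as the fibre components $F_i$ are recovered (with multiplicity) as the non-$S$ part of the divisor, so the morphism is injective, hence a closed immersion of a $g$-dimensional irreducible variety into $\proj^g$, forcing it to be all of $\proj^g$.

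Alternatively, and perhaps more cleanly, I would argue directly: given $D' \in |S+gE|$ effective, intersect with a general fibre $E_0$. Since $(D' \cdot E_0) = 1$ and $D'$ restricted to $E_0$ is effective of degree one, the section $S$ must be a component of $D'$ (the unique point $S \cap E_0$ forces it, using that no other horizontal curve can appear since $(D'-S)\cdot E_0 = 0$ would be violated by any further horizontal component). Writing $D' = S + D''$ with $D''$ effective, we get $D'' \in |gE|$, and then $D''$ is a sum of $g$ fibres because $\sheafO_X(gE) = \pi^* \sheafO_{\proj^1}(g)$ and $H^0(X, \pi^*\sheafO_{\proj^1}(g)) = H^0(\proj^1, \sheafO_{\proj^1}(g))$ (by the projection formula and $\pi_* \sheafO_X = \sheafO_{\proj^1}$), so every effective divisor in $|gE|$ is a pullback of a degree-$g$ divisor on $\proj^1$, i.e.\ $\sum_{i=1}^g F_i$.

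The main obstacle is the step asserting that $S$ is necessarily a component of every $D' \in |S+gE|$: one must rule out that $D'$ could be an irreducible curve, or have a different horizontal component, or that $S$ could fail to split off. The clean way is the restriction-to-a-general-fibre argument above, being careful that a "general fibre" avoids any base locus issues, combined with the observation that any horizontal (i.e.\ non-fibre) component $\Gamma$ of $D'$ satisfies $(\Gamma \cdot E) \geq 1$, so $D'$ can contain at most one horizontal component counted without multiplicity, and that component meets $E_0$ in exactly the point $S\cap E_0$ forcing $\Gamma = S$; one also uses $(S\cdot S) = -2$ to see $S$ appears with multiplicity exactly one. The rest is the cohomological bookkeeping, which is routine.
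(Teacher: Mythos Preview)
Your first approach---computing $(S+gE)^2=2g-2$, invoking vanishing to get $\dim|S+gE|=g$, and then observing that divisors of the form $S+\sum_{i=1}^g F_i$ already fill a $g$-dimensional closed subvariety of $|S+gE|\cong\proj^g$---is exactly the paper's argument; you simply make explicit the nefness check and the injectivity of $\mathrm{Sym}^g|E|\to|S+gE|$, both of which the paper leaves implicit.

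Your alternative approach, restricting an arbitrary $D'\in|S+gE|$ to a general fibre, is a genuinely different route. The paper never argues directly that $S$ must split off; it relies purely on the dimension count. Your argument bypasses the computation of $h^0$ and instead uses the geometry of the fibration: since $\mathcal{O}_X(E)|_{E_0}$ is trivial (the normal bundle of a fibre), one has $\mathcal{O}_X(D')|_{E_0}\cong\mathcal{O}_{E_0}(S\cap E_0)$, and on an elliptic curve a degree-one effective divisor in a fixed linear equivalence class is a single determined point; varying $E_0$ then forces the unique horizontal component to coincide with $S$. The remaining identification $|gE|=\pi^*|\mathcal{O}_{\proj^1}(g)|$ via $\pi_*\mathcal{O}_X=\mathcal{O}_{\proj^1}$ finishes it. This approach is arguably cleaner in that it does not need Kawamata--Viehweg, and it explains \emph{why} every member contains $S$, whereas the dimension count only shows \emph{that} it does. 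The paper's approach, on the other hand, is shorter and avoids the fibrewise linear-equivalence subtlety you flag in your ``main obstacle'' paragraph.
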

 \begin{proof}
The section $S$ is a smooth rational curve on a K3 surface so that $(S \cdot S)=-2$, and the curve $E$ is smooth and elliptic so that $(E \cdot E)=0$. 
We have $(S+gE \cdot S+gE)=(S \cdot S) +2g(S \cdot E)+g^2(E \cdot E)=-2+2g$. Hence $S+gE$ is nef and big for $g > 2$ and thus $H^1(X, \mathcal{O}(S+gE))=0$. 
The dimension of the linear system $|S+gE|$ is equal to $\frac{1}{2}(-2+2g)+1=g$ by Riemann--Roch. But this is the same as the dimension of the (closed)
subscheme of the projective space $|S+gE|$ consisting of divisors of the form $S + \sum_{i=1}^g F_i$ for $F_i$ fibres of $\pi$. Hence each element
of $|S+gE|$ takes this form.
 \end{proof}

Note from this proof that $(S+gE \cdot S+gE)=2g-2$. Thus if $S+gE$ is ample and primitive, we have $(X,S+gE) \in W_g$. The next lemma gives us
a simple condition under which this holds.
\begin{lem}
 Let $X$ be a smooth, elliptic K3 surface. Let $S$ be a section of $\pi$ and $E$ a generic fibre. Then the divisor $D=S+gE$ is ample 
and the associated line bundle $\mathcal{O}(S+gE)$ is primitive, for
all $g > 2$.
\end{lem}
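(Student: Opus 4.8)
The plan is to show directly that $D = S + gE$ meets every irreducible curve positively, invoking the Nakai--Moishezon criterion together with the fact that $(D \cdot D) = 2g-2 > 0$ (computed in the previous lemma). Since $X$ is an elliptic K3 with a Weierstrass fibration, the relevant classes of curves to test against are controlled: the Picard lattice of a generic such $X$ is generated by $S$ and $E$ with $(S\cdot S) = -2$, $(E\cdot E) = 0$, $(S \cdot E) = 1$, but to get a statement valid for \emph{all} smooth elliptic K3 surfaces $X$ (with arbitrary Picard rank) I would instead argue geometrically rather than lattice-theoretically. First I would dispose of the two obvious families of curves: for a fibre component $F$ one has $(D \cdot F) = (S \cdot F) + g(E \cdot F) = (S \cdot F) \geq 0$, with equality only if $F$ is a fibre component disjoint from the section; but the section meets every fibre (in exactly one point, by the definition of a Weierstrass fibration in minimal form), so in fact $(D\cdot F) \geq 1$ for every component $F$ of a fibre, and $(D \cdot E) = 1$ for a generic fibre. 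For the section itself, $(D \cdot S) = (S\cdot S) + g(S \cdot E) = -2 + g > 0$ since $g > 2$.

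Next I would handle an arbitrary irreducible curve $\Gamma \subseteq X$ that is neither the section nor contained in a fibre. Such a $\Gamma$ dominates $\proj^1$ under $\pi$, so $(\Gamma \cdot E) = m > 0$ where $m$ is the degree of $\pi|_\Gamma$. Then $(D \cdot \Gamma) = (S \cdot \Gamma) + g m$. Since $S$ is an irreducible curve and $\Gamma \neq S$, we have $(S \cdot \Gamma) \geq 0$, hence $(D \cdot \Gamma) \geq gm > 0$. Combined with $(D \cdot D) = 2g - 2 > 0$, Nakai--Moishezon gives that $D$ is ample.

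Finally, for primitivity: I would suppose $\sheafO(S + gE) = \sheafO(D')^{\otimes n}$ for some $D' \in Pic(X)$ and integer $n > 1$, and derive a contradiction. Intersecting with the fibre class $E$ gives $1 = (D \cdot E) = n(D' \cdot E)$, forcing $n = 1$ since $(D'\cdot E)$ is an integer. This immediately rules out $n > 1$, so $\sheafO(S+gE)$ is primitive. The main obstacle I anticipate is making sure the intersection-number bounds $(S \cdot \Gamma) \geq 0$ and $(S \cdot F) \geq 1$ are correctly justified in the possibly-singular or reducible-fibre setting — in particular that the section of a minimal Weierstrass fibration genuinely meets every fibre and that $S$ is an honest irreducible curve so that effective curves distinct from $S$ intersect it nonnegatively — but these follow from the definition of Weierstrass fibration given above (the section avoids the nodes and cusps and is a genuine section, hence meets each fibre) together with standard facts about intersection theory of irreducible curves on a smooth surface.
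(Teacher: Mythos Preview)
Your argument is correct and follows essentially the same route as the paper: Nakai--Moishezon via a case split on whether an irreducible curve $C$ dominates $\proj^1$, equals $S$, or is a fibre, together with primitivity from $(D\cdot E)=1$. The only wrinkle is that your discussion of ``fibre components'' is unnecessary and the reasoning there would not actually work for a genuinely reducible fibre (the section need only meet one component); in the paper's setting every fibre of a Weierstrass fibration is integral, so $C\sim E$ and $(D\cdot C)=1$ directly, which is exactly how the paper handles this case.
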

\begin{proof}
We firstly establish primitivity. Suppose $M \in Pic(X)$ is such that $M^n=\mathcal{O}(S+gE)$ for some positive integer $n$. We
have $n(M \cdot E)=(M^n \cdot E)= (S+gE | E)=1$, which forces $n=1$ since $(M \cdot E)$ is an integer.

We next establish ampleness. The Nakai--Moishezon criterion says that a divisor $D$ on a surface $X$ is ample if and only if
$D^2 > 0$ and $(D \cdot C) >0$ for each irreducible curve $C$ in $X$, see \cite[p.\ 365]{har}.
Now we have $(S+gE \cdot S+gE)=2g-2>0$ for $g \geq 2$, 
and the first condition is satisfied. Let $C$ be an irreducible curve. Then $\pi(C)$ is irreducible, and hence either $\pi(C)=\proj^1$ or $\pi(C)=\{p\}$ for some point $p \in \proj^1$. In the 
first case $C$ must meet the generic fibre $E$ properly, and hence $(C \cdot E) >0$.  If $C \neq S$, then $(C \cdot S+gE) \geq (C \cdot gE) > 0$.
If $C=S$ we have $(S \cdot S+gE)=-2+g>0$, for $g>2$. 
Next assume $\pi(C)=\{p\}$, i.e.
$C$ is a fibre. We then have $C \sim E$ (since the fibres are assumed integral) and so $(C \cdot S+gE)=1>0$.
Hence $D=S+gE$ is ample.
\end{proof}
  
Now suppose $X$ is a smooth elliptic K3 surface such that all singular fibres are nodal. Note that, since each fibre is
integral of arithmetic genus one, the singular fibres have precisely one node and are rational. Let 
the singular fibres be $N_1, \ldots, N_t$. The Euler characteristic of each fibre is $\chi(N_i)=1$ and further $24=\chi(X)=\sum_{i=1}^t \chi(N_i)$, from
which we see that there 
are precisely $24$ singular fibres $N_1, \ldots, N_{24}$. Every
rational curve in $|S+gE|$ has the form $S+ \sum_{i=1}^{24} m_i N_i$ with $\sum_{i=1}^{24} m_i=g$. 
We will show that each of these curves has an unramified representative.
\begin{prop}
 Let $X$ be a smooth elliptic K3 surface, and suppose that each singular fibre $N_i$ is nodal. Let $C \in |S+gE|$ be rational.
Then there exists a unramified stable map $f: D \to X \in T(X, S+gE)$ representing $C$.
\end{prop}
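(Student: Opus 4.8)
The plan is to construct the domain $D$ and the map $f$ completely explicitly as a tree of smooth rational curves equipped with normalization maps, and then to verify unramifiedness by local computations. As recorded just above, the rational curve $C$ has the form $C = S + \sum_{i=1}^{24} m_i N_i$ with $m_i \geq 0$ and $\sum_i m_i = g$. For the domain I would take one copy $S_0 \cong \mathbb{P}^1$, destined to map isomorphically onto the section $S$, together with, for each $i$ with $m_i > 0$, a chain $R_{i,1},\dots,R_{i,m_i}$ of copies of $\mathbb{P}^1$, each destined to map onto the nodal rational fibre $N_i$ via its normalization $\nu_i \colon \mathbb{P}^1 \to N_i \seq X$. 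Thus $f$ will restrict to an isomorphism on $S_0$ and to $\nu_i$ on each $R_{i,j}$; since each component dominates a curve in $X$, no component is contracted, so the only substantive things to arrange are the gluing pattern and the unramifiedness of $f$.

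For the gluing, recall that $S$ avoids the node $n_i$ of $N_i$ and meets $N_i$ transversally at the single point $p_i$ (because $(S \cdot N_i) = (S\cdot E) = 1$), and write $\nu_i^{-1}(n_i) = \{a_i,b_i\}$ for the two points lying over the node. I would glue $R_{i,1}$ to $S_0$ by identifying $\nu_i^{-1}(p_i) \in R_{i,1}$ with $(f|_{S_0})^{-1}(p_i) \in S_0$, and then form the chain by identifying $b_i \in R_{i,j}$ with $a_i \in R_{i,j+1}$ for $1 \leq j < m_i$. On each component the identified points are pairwise distinct and no component is glued to itself, so the resulting curve $D$ is reduced with only nodes, and its dual graph is a tree (vertices $S_0$ and the $R_{i,j}$, with $g+1$ vertices and $g$ edges), hence connected with $p_a(D) = 0$; so $D$ is prestable. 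The maps on the components agree at each node, so $f\colon D \to X$ is defined, and $f_*[D] = S + \sum_i m_i N_i$, which lies in $|S+gE|$ since each fibre satisfies $N_i \sim E$ and $\sum_i m_i = g$; thus $f$ is a genus-zero stable map in $T(X,S+gE)$ representing $C$ (the stability condition on contracted components is vacuous).

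It remains to verify that $f$ is unramified, i.e.\ $\Omega_{D/X} = 0$. Away from the nodes of $D$ this is immediate: $f|_{S_0}$ is an isomorphism onto $S$, and the normalization $\nu_i$ of a nodal curve is unramified (locally over a preimage of the node it is, on completed local rings, $\C[[t]] \to \C[[x,y]]/(xy)$, $x\mapsto t$, $y \mapsto 0$, which has vanishing relative differentials). At a node of $D$ of the first type, where $S_0$ meets $R_{i,1}$, the two branches of $D$ map isomorphically onto neighbourhoods of $p_i$ in $S$ and in $N_i$ respectively, and $S$ and $N_i$ are transverse at $p_i$; writing $X$ locally as $\C[[x,y]]$ with $S = \{x=0\}$ and $N_i = \{y=0\}$, one sees $f$ is even a local immersion there, hence unramified. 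At a node of $D$ of the second type, where $R_{i,j}$ meets $R_{i,j+1}$, the point of the gluing choice is that the two branches of $D$ map isomorphically onto the two \emph{distinct} branches of the node $n_i$ of $N_i$ (the $b$-preimage was identified with the $a$-preimage); these branches are transverse in $X$, so the same local computation shows $f$ is a local immersion, hence unramified. Consequently $f$ is unramified everywhere, and it is the desired unramified representative of $C$. The only real obstacle is organizational: arranging the gluing — in particular the $a_i \leftrightarrow b_i$ identifications along each chain and the use of the single transverse point $p_i$ to attach the chain to $S_0$ — so that at every node of $D$ the two branches point in transverse directions of $X$, which is precisely what makes the unramifiedness computation go through.
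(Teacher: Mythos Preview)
Your construction is exactly the one given in the paper: a copy of $\mathbb{P}^1$ mapping isomorphically onto $S$, with chains of normalizations of $N_i$ attached at the transverse point $S\cap N_i$, glued so that at each internal node the two branches of $D$ land on the two distinct branches of the node of $N_i$. Your write-up is in fact more detailed than the paper's (explicit verification that $D$ is a prestable tree, explicit local ring computations for unramifiedness), but the idea and the gluing choice are identical.
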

\begin{proof}
 Suppose $C$ has the form $S+ \sum_{i=1}^{24} m_i N_i$. We construct $f$ as follows. 
 Assume $j$ is such that $m_j \neq 0$ and let $f_{j,1}: D_{j,1} \to N_j$ be the normalization morphism. 
Now suppose $p_{j_1}$ is a point on $D_{j,1}$ such that $f_{j,1}$ maps $p_{j_1}$ to the node. 
Then attach a smooth rational curve $D_{j,2}$ to $D_{j,1}$ at the point $p_{j_1}$, 
and define a map $f_{j,2}: D_{j,1} \cup D_{j,2} \to 2N_j$ 
by demanding that $f_{j,2}$ is locally biholomorphic, i.e.\ in a small analytic open set around $p_j$, 
$f_{j,2}$ maps $D_{j,1}$ and $D_{j,2}$ to two separate branches of the node at $N_j$. Now let $p_{j_2} \neq p_{j_1}$ be 
the other point on $D_{j,2}$ which is sent to the node, attach a smooth 
rational curve $D_{j,3}$ to $p_{j_2}$ and define a locally biholomorphic map $f_{j,3}: D_{j_1} \cup D_{j,2} 
\cup D_{j,3}  \to 3N_j$.

 Continuing in this way, we
can construct a map $f_{j,m_j}: T_j:=D_{j,1} \cup D_{j,2} \cup \ldots \cup D_{j,m_j} \to m_j N_j$. 
Let $T_0$ be smooth and rational, and $g_0: T_0 \to S$ be an isomorphism. Define a connected curve
$D:=T_0 \cup_j T_j$ where we glue $T_j$ to $T_0$ at the point of intersection between $S$ and $N_j$. 
There is then a unique map
$f: D:=T_0 \cup_j T_j \to C$, where the index ranges over those $j$ with $m_j \neq 0$, such that $f|_{T_j}=f_{j,m_j}$
and $f|_{T_0}=g_0$. 
Note that on each component $I$ of $D$, $f|_{I}$ is 
unramified since the normalization of an integral nodal curve is unramified and birational. 
Furthermore, it is clear that $f$ is biholomorphic to its image in an analytic neighbourhood around all nodes of $T$. Hence $f$ is unramified.
See the figure below.
\end{proof}
\begin{figure}[h]
 \centering
 \includegraphics{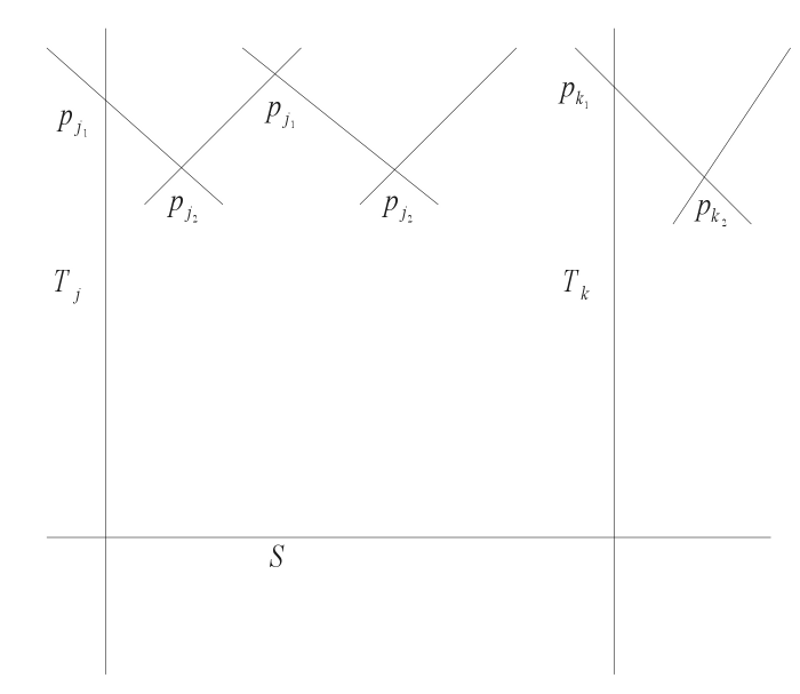}
\caption{The unramified representative of $C \in |S+gE|$ rational}
\label{unfam}
\end{figure}
\begin{cor} \label{nodal-cor}
 Let $X$ be as above. Let $C \in |S+gE|$ be rational. Then $(C,X,S+gE) \in \overline{M}_{0,1}^g$.
\end{cor}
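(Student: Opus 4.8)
The plan is to chain together the two results immediately preceding this corollary. By the previous proposition, since $X$ is a smooth elliptic K3 surface all of whose singular fibres are nodal, any rational curve $C \in |S+gE|$ admits an unramified stable map $f: D \to X$ in $T(X, S+gE)$ with scheme-theoretic image $C$. That is, $C$ admits an unramified representative in the sense defined in Section \ref{sect:main}.

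Next I would invoke the Proposition of Section \ref{sect:main} which states that if $(X,L)$ is a primitively polarized K3 surface of genus $g$ and $C \in |L|$ is a rational curve admitting an unramified representative, then $(C,X,L) \in \overline{M}^g_{0,1}$. To apply this I need to check the hypotheses: the pair $(X, S+gE)$ must be a primitively polarized K3 surface of genus $g$. This is precisely what the two lemmas above on elliptic K3 surfaces give us — the line bundle $\mathcal{O}(S+gE)$ is ample and primitive for $g>2$, and $(S+gE \cdot S+gE) = 2g-2$, so $(X,S+gE) \in W_g$. Thus the hypotheses are satisfied with $L = S+gE$, and the Proposition yields $(C,X,S+gE) \in \overline{M}^g_{0,1}$.

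I do not expect any genuine obstacle here: the corollary is a direct concatenation of the preceding proposition (existence of the unramified representative) with the earlier sufficiency criterion (unramified representative implies membership in $\overline{M}^g_{0,1}$), the only bookkeeping being the verification that $(X,S+gE)$ is genuinely a primitively polarized K3 surface of genus $g$, which was already established. One mild point worth a sentence: the earlier Proposition was stated for curves in $|L|$ rather than $|kL|$, i.e.\ only for $k=1$; since here the relevant class is $S+gE$ itself and we are treating it as the primitive polarization $L$, this is exactly the $k=1$ case, so no extension is needed. Hence the proof is simply: ``Combine the previous proposition with the proposition of Section \ref{sect:main}, noting that $(X,S+gE) \in W_g$ by the lemmas above.''
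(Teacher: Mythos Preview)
Your proposal is correct and matches the paper's intended argument: the corollary is stated without proof precisely because it follows immediately by combining the preceding proposition (existence of an unramified representative) with the sufficiency criterion from Section~\ref{sect:main}, after noting via the earlier lemmas that $(X,S+gE)$ is a primitively polarized K3 surface of genus $g$. The only cosmetic point is that saying $(X,S+gE)\in W_g$ tacitly involves choosing an embedding of $X$ in $\proj^N$ via $|3(S+gE)|$, but this is harmless.
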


\section{Explicit constructions of elliptic K3 surfaces}
In this section we will use the theory of Weierstrass fibrations to give some explicit constructions of elliptic K3 surfaces. 
The starting point of the theory of Weierstrass fibrations is the Weierstrass equation. Consider the plane curve defined
by
 $y^2 z=x^3+ax z^2+bz^3$
for $a, b \in \C$. This is called the \emph{Weierstrass equation}, and it defines a cubic in $\proj^2$.
This cubic is singular for values of $a,b$ if and only if the discriminant $\Delta=-16(4 a^3+27 b^2)$ vanishes. When the 
discriminant is nonzero, it defines a smooth elliptic curve.  

We now wish to define the global Weierstrass equation of a Weierstrass fibration over $\proj^1$. Let $L:=\mathcal{O}_{\proj^1}(N)$ for $N>0$,
and let $(A,B) \in H^0(L^4) \times H^0(L^6)$ be pairs of global sections, such that $\Delta(A,B):=-16(4 A^3+27 B^2) \in H^0(\mathcal{O}_{\proj^1}(12N))$ is not
the zero section. Now consider the closed subscheme $X$ of $\proj(L^{\otimes 2}\oplus L^{\otimes 3} \oplus \mathcal{O}_{\proj^1})$
defined by
$ y^2 z=x^3+Ax z^2+Bz^3$
where $(x,y,z)$ is a global coordinate system of $\proj$ relative to $(L^{\otimes 2} , L^{\otimes 3} , \mathcal{O}_{\proj^1})$. 
Assume further that for each $p \in \proj^1$, either $\mu_p(A) \leq 3$ or $\mu_p(B) \leq 5$ (or that both conditions hold).
Then $X \to \proj^1$
is a Weierstrass fibration in minimal form, and conversely any such Weierstrass fibration can be described in the above way. Further,
$X$ is a (possibly singular) K3 surface precisely when $N=2$. The singular fibres of $X$
occur precisely over those $q \in \proj^1$ with $\Delta(A,B)(q)=0$. For proofs of these facts, see \cite[II.5, III.3, III.4]{miranda}. 

The above method produces explicit equations for a Weierstrass fibration. In the following
we assume $N=2$ since we are only interested in the K3 case. We start
by describing the $\proj^2$ bundle $\proj(\mathcal{O}_{\proj^1}(4) \oplus \mathcal{O}_{\proj^1}(6) \oplus \mathcal{O}_{\proj^1})$
explicitly. Let $U_0 \simeq \mathbb{A}^1$ and $U_1 \simeq \mathbb{A}^1$ be two copies
of the affine line. Then $\proj^1$ can be obtained from $U_0 \sqcup U_1$ be identifying
$u_0 \in U_0$ with $u_1 \in U_1$ if and only if $u_0 \neq 0$ and $u_1= \frac{1}{u_0}$. Now set
$W_0 = U_0 \times \proj^2$ and $W_1 = U_1 \times \proj^2$. Then $\proj(\mathcal{O}_{\proj^1}(4) \oplus \mathcal{O}_{\proj^1}(6) \oplus \mathcal{O}_{\proj^1})$
is obtained from $W_0 \sqcup W_1$ by identifying $(u_0, [x_0: y_0: z_0])$ with 
$(u_1, [x_1: y_1: z_1])$ if and only if $u_0 \neq 0$ and 
$$ u_1= \frac{1}{u_0}, \; x_1=\frac{x_0}{u_0^4}, \; y_1= \frac{y_0}{u_0^6}, \; z_1=z_0.$$ The 
global sections $A \in H^0(\mathcal{O}_{\proj^1}(8))$ and $B\in H^0(\mathcal{O}_{\proj^1}(12))$ can be considered as pairs of polynomial functions $\{ A_0, A_1 \}$ and
$\{B_0, B_1\}$:
\begin{align*}
A_0: U_0 \to \mathbb{A}^1, \; \; A_1: U_1 \to \mathbb{A}^1 \\
B_0: U_0 \to \mathbb{A}^1, \; \; B_1: U_1 \to \mathbb{A}^1
\end{align*}
satisfying $A_1(u_1)=u_1^8 A_0(\frac{1}{u_1})$ and $B_1(u_1)=u_1^{12} B_0(\frac{1}{u_1})$.
The hypersurface $X$ is then given by the equations
$$
y_0^2 z_0 = x_0^3+A_0(u_0) x_0 z_0^2+ B_0(u_0)z_0^3
$$
on $W_0$ and
$$
y_1^2 z_1 = x_1^3+A_1(u_1) x_1 z_1^2+ B_1(u_1)z_1^3
$$
on $W_1$. The above equations make sense, since if $(u_0, [x_0, y_0, z_0])$ satisfy
the first equation, then $(\frac{1}{u_0}, [\frac{x_0}{u_0^4}, \frac{y_0}{u_0^6}, z_0])$
satisfies the second equation.

In particular, we can use the above explicit equations to show that certain Weierstrass fibrations
are smooth.
\begin{prop} \label{cusp-eg}
Let $a_1, \ldots, a_{12}$ be pairwise distinct complex numbers. 
Then the Weierstrass fibration defined by $A=0 \in H^0(\mathcal{O}_{\proj^1}(8))$ and 
$B= \prod_{i=1}^{12}(u_0-a_iu_1)\in H^0(\mathcal{O}_{\proj^1}(12))$ defines a smooth,
elliptic K3 surface. We denote
this K3 surface by $X_{(a_i),0}$. The
fibration $X_{(a_i),0} \to \proj^1$ has $12$ singular fibres, over the points $a_i \in U_0 \seq \proj^1$.  
\end{prop}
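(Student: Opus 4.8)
The plan is to argue directly from the explicit equations for the $\proj^2$-bundle $\proj(\mathcal{O}_{\proj^1}(4)\oplus\mathcal{O}_{\proj^1}(6)\oplus\mathcal{O}_{\proj^1})$ set up above, checking in turn the three assertions: that $(A,B)=(0,\prod_{i=1}^{12}(u_0-a_iu_1))$ defines a Weierstrass fibration in minimal form with $N=2$ (hence a possibly singular K3 surface), that its singular fibres lie exactly over $a_1,\dots,a_{12}$, and that the total space is actually smooth.

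First I would record $\Delta(A,B)=-16(4A^3+27B^2)=-432\,B^2$, which is not the zero section since $B\neq 0$, and verify the minimality hypothesis. Because the $a_i$ are pairwise distinct, $B$ has only simple zeros, so $\mu_p(B)\le 1\le 5$ for every $p\in\proj^1$; thus the condition ``$\mu_p(A)\le 3$ or $\mu_p(B)\le 5$'' holds everywhere despite $A=0$. By \cite[II.5, III.3, III.4]{miranda} the associated $X_{(a_i),0}\to\proj^1$ is then a Weierstrass fibration in minimal form, and since $N=2$ the total space is a K3 surface with at worst rational double points.

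Next I would locate the singular fibres: they occur precisely where $\Delta(A,B)$, equivalently $B$, vanishes. In the chart $U_0$ (where $u_1=1$) this is exactly $u_0\in\{a_1,\dots,a_{12}\}$, and in $U_1$ one has $B_1(u_1)=u_1^{12}B_0(1/u_1)=\prod_{i=1}^{12}(1-a_iu_1)$, which is $1\neq 0$ at $u_1=0$, so there is no singular fibre over the point at infinity. Hence there are exactly $12$ singular fibres, over the $a_i$. Over $a_i$ the fibre in $\proj^2_{[x_0:y_0:z_0]}$ is $\{y_0^2z_0=x_0^3\}$, since $A_0(a_i)=0$ and $B_0(a_i)=0$; this is a cuspidal cubic whose unique singular point is the cusp $[0:0:1]$, disjoint from the section (which passes through $[0:1:0]$).

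The crux is smoothness of the total space. Away from the singular points of the singular fibres, the projection $X_{(a_i),0}\to\proj^1$ is smooth onto a smooth base, so $X_{(a_i),0}$ is smooth there, and it remains to examine the $12$ cusps. In the affine chart $z_0=1$ over $U_0$, with coordinates $(u_0,x_0,y_0)$, the surface is $\{F=0\}$ with $F=y_0^2-x_0^3-B_0(u_0)$, $B_0(u_0)=\prod_{i=1}^{12}(u_0-a_i)$, and the cusp over $a_i$ is the point $(a_i,0,0)$. There $\nabla F=(-3x_0^2,\,2y_0,\,-B_0'(u_0))|_{(a_i,0,0)}=(0,0,-B_0'(a_i))$, and $B_0'(a_i)\neq 0$ because $B_0$ has a \emph{simple} zero at $a_i$; this is exactly where the distinctness hypothesis enters. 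Hence $\nabla F\neq 0$ and $X_{(a_i),0}$ is smooth at each cusp, so it is smooth everywhere, i.e.\ a smooth elliptic K3 surface with $12$ cuspidal singular fibres. The one point requiring care is this last step: the fibres over the $a_i$ are cuspidal rather than nodal, so it is not automatic that the total space stays smooth — a cuspidal (type $II$) fibre contributes a smooth surface point only when $B$ vanishes to order exactly one there, which fails (producing a rational double point) as soon as two of the $a_i$ collide.
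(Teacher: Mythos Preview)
Your argument is correct and follows the same underlying strategy as the paper --- verify the minimality condition, locate the singular fibres via the discriminant, and then check smoothness of the total space using the Jacobian criterion on the explicit Weierstrass equation.

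The only real difference is organizational. The paper simply writes down all four partial derivatives of the defining equations $f_0$ and $f_1$ on both charts $W_0$ and $W_1$ and observes (using distinctness of the $a_i$) that they have no common zero with $f_0$, resp.\ $f_1$. You instead first reduce the problem: since the fibration is flat with smooth base, the total space is automatically smooth away from the singular points of the singular fibres, and you have already checked there is no singular fibre over $u_1=0$; hence it suffices to examine the twelve cusps $(a_i,0,0)$ in the single affine chart $z_0=1$ over $U_0$. There the $u_0$-derivative $-B_0'(a_i)$ does the work. This buys you a cleaner computation (one chart, one point-type, one nonvanishing partial) and makes transparent exactly where the distinctness hypothesis enters, at the cost of invoking the general fact that smoothness of a flat morphism over a smooth base implies smoothness of the source. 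The paper's brute-force check avoids that appeal but is slightly redundant.
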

\begin{proof}
The discriminant in this case is $\Delta(A,B)=-432(B^2) \in H^0(\mathcal{O}_{\proj^1}(24))$ which vanishes
to order two at the $12$ distinct points $a_i \in U_0$, and hence $X_{(a_i),0}$ has $12$ singular fibres over the $a_i$.
The surface $X_{(a_i),0}$ is given by the equations
$$
f_0:=y_0^2 z_0 - x_0^3-\prod_{i=1}^{12}(u_0-a_i)z_0^3
$$
on $W_0$ and
$$
f_1:=y_1^2 z_1 - x_1^3- \prod_{i=1}^{12}(1-a_iu_1)z_1^3
$$
on $W_1$. 
We have
\begin{align*}
\frac{\partial f_0}{\partial x_0} &= -3x_0^2 \\
\frac{\partial f_0}{\partial y_0} &=2y_0 z_0 \\
\frac{\partial f_0}{\partial z_0} &= y_0^2-3 \prod_{i=1}^{12}(u_0-a_i)z_0^2 \\
\frac{\partial f_0}{\partial u_0} &= \sum_{l=1}^12 \prod_{i=1, i \neq l}^{12}(u_0-a_i) z_0^3
\end{align*}
and then the condition that the $a_i$ are pairwise distinct gives that $\frac{\partial f_0}{\partial x_0}$, 
$\frac{\partial f_0}{\partial y_0}$, $\frac{\partial f_0}{\partial z_0}$,
$\frac{\partial f_0}{\partial u_0}$ and $f_0$ have no common zeroes. Likewise, we have
\begin{align*}
\frac{\partial f_1}{\partial x_1} &= -3x_1^2 \\
\frac{\partial f_1}{\partial y_1} &=2y_1 z_1 \\
\frac{\partial f_1}{\partial z_1} &= y_1^2-3 \prod_{i=1}^{12}(1-a_iu_1)z_1^2 \\
\frac{\partial f_1}{\partial u_1} &= \sum_{l=1}^12 a_l\prod_{i=1, i \neq l}^{12}(1-a_i u_1) z_0^3
\end{align*}
and again the condition that the $a_i$ are pairwise distinct gives that $\frac{\partial f_1}{\partial x_1}$, 
$\frac{\partial f_1}{\partial y_1}$, $\frac{\partial f_1}{\partial z_1}$,
$\frac{\partial f_1}{\partial u_1}$ and $f_1$ have no common zeroes. Hence $X_{(a_i),0}$ is a smooth
K3 surface.
\end{proof}

Let $X \to \proj^1$ be a Weierstrass fibration defining a possibly singular K3 surface, and let $\tilde{X} \to \proj^1$ be a minimal desingularization of $X$. 
Then $\tilde{X}$ is a smooth, minimal elliptic surface
with section. 
The possible fibres $F$ of $\tilde{X}$ have been completely classified, see \cite{schuett}. The type of the fibre $F$ determines the singularities
of $X$. This information is tabulated below:

\begin{center}
\begin{tabular}{|l|p{5cm}|l|l|}
\hline
Kodaira type of $F$ & Description of $F$ & $\chi(F)$ & Singularity Type of $X$ \\ \hline
$I_0$ & smooth elliptic curve & $0$ & none \\ \hline
$I_1$ & irreducible, nodal rational curve & $1$ & none \\ \hline
$I_N$, $N \geq 2$ & cycle of $N$ smooth rational curves, meeting transversally & $N$ & $A_{N-1}$ \\ \hline
$I_N^*$, $N \geq 0$ & $N+5$ smooth rational curves meeting with dual graph $D_{N+4}$ & $N+6$ & $A_{N-1}$ \\ \hline
$II$ & a cuspidal rational curve & $2$ & none \\ \hline
$III$ & two smooth rational curves meeting at a point of order two & $3$ & $A_1$ \\ \hline
$IV$ & three smooth rational curves meeting at a point & $4$ & $A_2$ \\ \hline
$IV^*$ & seven smooth rational curves meeting with dual graph $\tilde{E}_6$ & $8$ & $E_6$ \\ \hline
$III^*$ & eight smooth rational curves meeting with dual graph $\tilde{E}_7$ & $9$ & $E_7$ \\ \hline
$II^*$ & nine smooth rational curves meeting with dual graph $\tilde{E}_8$ & $10$ & $E_8$ \\ \hline
 \end{tabular}
\end{center}
\clearpage

\begin{prop}
 Let $(A,B) \in H^0(\proj^1,\mathcal{O}(8)) \times H^0(\proj^1,\mathcal{O}(12))$ be data defining a Weierstrass fibration
in minimal form. Assume that the discriminant $\Delta=-16(4 A^3+27 B^2) \in H^0(\mathcal{O}_{\proj^1}(24))$ vanishes in $24$ distinct points
 of $\proj^1$. Then the Weierstrass fibration $X$ is a smooth K3 surface.  
\end{prop}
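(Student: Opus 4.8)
The plan is to reduce to the local structure of the Weierstrass model over each point of $\proj^1$ lying under a singular fibre, and to show that such a point contributes a rational double point to $X$ only when the discriminant vanishes there to order at least two. Since $\Delta$ is a section of $\mathcal{O}_{\proj^1}(24)$, having $24$ distinct zeros means every zero is simple; the assertion then follows once we know that a simple zero of $\Delta$ produces an irreducible nodal fibre (Kodaira type $I_1$), which by the table above contributes no singularity to $X$. As $X$ is already known to be a (possibly singular) K3 surface for $N=2$ (by \cite[II.5, III.3, III.4]{miranda}), this gives that $X$ is a smooth K3 surface.

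Concretely, I would run the Jacobian criterion exactly as in Proposition~\ref{cusp-eg}, working in the chart $W_0=U_0\times\proj^2$ with affine equation $f=y^2-x^3-A_0(u)x-B_0(u)$ on $\{z_0=1\}$, and symmetrically in $W_1$. First one checks that $X$ is smooth along the section $\{z_0=0\}$ (there the relevant partial is a unit) and at every point with $y\neq 0$ (where $f_y=2y\neq 0$), and that a point with $y=0$ on the fibre over $u$ has $f_x=-(3x^2+A_0(u))=0$ precisely when $x$ is a multiple root of $t^3+A_0(u)t+B_0(u)$; such points occur only over the zeros of $\Delta(u)=-16(4A_0^3+27B_0^2)$, and there is exactly one of them on each singular fibre (the node or cusp of the integral cubic). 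The remaining partial is $f_u=-(A_0'(u)x+B_0'(u))$, so $X$ is singular at that point iff additionally $A_0'(u)x+B_0'(u)=0$. Using the relations $3x^2+A_0=0$ and $2A_0x+3B_0=0$ valid there (the second coming from $f=0$ and the first), a short substitution shows that $A_0'(u)x+B_0'(u)=0$ forces $12A_0^2A_0'+54B_0B_0'=0$, i.e.\ $\Delta'(u)=0$. Hence $X$ is singular over $u$ only if $\Delta$ has a zero of order $\geq 2$ at $u$, and since by hypothesis all $24$ zeros of $\Delta$ are simple, $X$ has no singular points at all.

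A more conceptual alternative, which the placement of the table suggests, is this: pass to the minimal desingularization $\tilde X\to\proj^1$, a smooth elliptic surface, and use the standard fact (e.g.\ \cite{miranda}) that the order of vanishing of $\Delta$ at $p$ equals the topological Euler number $\chi$ of the fibre of $\tilde X$ over $p$; then $\sum_p\chi(\tilde X_p)=\chi(\tilde X)=24=\deg\Delta$, so a simple zero of $\Delta$ corresponds to a fibre with $\chi=1$, which by the table is forced to be of type $I_1$, whose ``Singularity Type of $X$'' entry is ``none''. Thus $\tilde X\to X$ is an isomorphism and $X$ is smooth.

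I expect the only genuine obstacle to be the local computation at the singular point of a singular fibre: pinning the candidate singular point down via $f_x=f_y=0$ to the node/cusp of the cubic, and then massaging the vanishing of $f_u$ together with $f=0$ into the clean equivalence with $\Delta'(u)=0$ — plus the minor bookkeeping of repeating this in the chart $W_1$ (to cover a possible zero of $\Delta$ at $\infty$) and verifying smoothness along the section. Everything else (that $X$ is a possibly singular K3, that its singularities are rational double points supported over $\{\Delta=0\}$, and that $24$ distinct zeros of a degree-$24$ form are automatically simple) is either already recorded in the excerpt or immediate.
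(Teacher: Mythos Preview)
Your proposal is correct, and in fact your ``more conceptual alternative'' is essentially the paper's own proof: the paper passes to the minimal desingularization $\tilde X$, notes that the $24$ distinct zeros of $\Delta$ yield $24$ singular fibres $F_1,\dots,F_{24}$, uses $24=\chi(\tilde X)=\sum_i\chi(F_i)$ to force each $\chi(F_i)=1$, and then reads off from the table that all fibres are of type $I_1$, so $X$ has no singularities. The only cosmetic difference is that you invoke the finer statement $\mathrm{ord}_p(\Delta)=\chi(\tilde X_p)$, whereas the paper just counts: $24$ distinct zeros give $24$ singular fibres whose Euler numbers must sum to $24$ and are each at least $1$.

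Your primary route via the Jacobian criterion is a genuinely different argument from the paper's. It is more hands-on but entirely self-contained: it avoids invoking the Kodaira classification and the relation between $\mathrm{ord}_p(\Delta)$ and the fibre type, trading this for an explicit computation. The key identity you isolate, that at the singular point $(u_0,x_0,0)$ of a fibre one has $\Delta'(u_0)=-1728\,x_0^{3}\bigl(A_0'(u_0)x_0+B_0'(u_0)\bigr)$ (which follows from $A_0=-3x_0^2$, $B_0=2x_0^3$), makes the implication ``$f_u=0\Rightarrow\Delta'(u_0)=0$'' transparent and is a nice sharpening of what you wrote. The paper's approach is shorter and more structural, while yours gives a bit more: it shows directly that the total space is smooth over every simple zero of $\Delta$ without ever leaving the Weierstrass model.
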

\begin{proof}
Let $\tilde{X}$ be the minimal desingularization of $X$, this is then a smooth K3 surface.
From the fact that the discriminant vanishes in $24$ points, $\tilde{X}$ must have $24$ singular fibres, $F_1, \ldots, F_{24}$. Further,
$24=\chi(\tilde{X})=\sum_{i=1}^{24} \chi(F_i)$, since $\tilde{X}$ is a smooth K3 surface. Hence each $F_i$ must have $\chi(F_i)=1$ and
by the above table must be an irreducible, nodal rational curve. Again from the above table, $X$ has no singularities, and thus is already smooth, so that
$\tilde{X} \simeq X$.
\end{proof}

\begin{eg} \label{nodal-eg}
 Let $(a_i)=(a_1, \ldots, a_{12}) \in \C^{12}$ be an ordered tuple of $12$ pairwise distinct points of $\C$, and let $K=r e^{i \theta}$ be a nonzero complex number,
with $r \in \R$, $0 \leq \theta < 2\pi$. Define $\delta:= -\sqrt[3]{\frac{27}{4}} \in \R$ and $\sqrt[3]{K^2}=\sqrt[3]{r} e^{\frac{2 \theta i}{3}}$ with
$\sqrt[3]{r} \in \R$. Let $x_0, x_1$ be homogeneous coordinates for $\proj^1$ and let 
\begin{align*}
\alpha &= \prod_{i=1}^{12}(x_0- a_i x_1) \in H^0(\mathcal{O}_{\proj^1}(12)) \\
A &= \sqrt[3]{K^2} \delta x_1^{8} \in H^0(\mathcal{O}_{\proj^1}(8)) \\
B &= \alpha + K x_1^{12} \in H^0(\mathcal{O}_{\proj^1}(12)).
\end{align*}
Then $A$ vanishes to order eight at $[1:0]$ whereas $B$ does not vanish at $[1:0]$, so the condition that either $\mu_p(A) \leq 3$ or $\mu_p(B) \leq 5$ is met.
Further $\Delta(A,B)=-432 (-K^2 x_1^{24} + (\alpha+K x_1^{12})^2) \in H^0(\mathcal{O}_{\proj^1}(24))$ which does not vanish identically. Thus the Weierstrass data $(A,B)$ defines
a minimal Weierstrass fibration $X_{(a_i), K} \to \proj^1$, such that $X_{(a_i), K}$ is a K3 surface with at most rational double point singularities. 
If we further assume that the form 
$\alpha + 2K x_1^{12}$
vanishes in $12$ distinct points of $p_1, \ldots , p_{12} \in \proj^1$, then $\Delta(A,B)$ vanishes at the $24$ distinct points 
$a_1, \ldots, a_{12}, p_1, \ldots, p_{12}$ (the $p_i$ are
distinct from $a_j$ by the assumption $K \neq 0$). Hence by the above proposition we then have that $X_{(a_i), K}$ is a \emph{smooth} K3 surface.
\end{eg}

\section{Connectedness of the space $\overline{M}^g_{0,1}$}
In this section we prove that the space $\overline{M}^g_{0,1}$ is connected.
We firstly simplify our notation by fixing $g$ and dropping the reference to the line bundle $T+gE$ on
an elliptic K3 surface.

\begin{mydef}
 Let $X \to \proj^1$ be a smooth elliptic K3 surface and $C$ a curve with rational components in $|S+gE|$,
 where $S$ denotes the section and $E$ the generic fibre. Then we define
$[C , X]:= (C, X, S+gE) \in |\mathcal{L}|$, where $|\mathcal{L}| \to W_g$ is defined in Section \ref{sect:const}.
\end{mydef}

Let $(a_i)=(a_1, \ldots, a_{12}) \in \C^{12}$ be an ordered tuple of $12$ pairwise distinct points of $\C$, 
define $p(t):= \prod_{i=1}^{12}(t-a_i)$ and assume $K$ is such that $p(t)=-2K$ has $12$ distinct solutions.
Consider the smooth, elliptic K3 surface $X_{(a_i),K}$ from Example \ref{nodal-eg}. We then have $[C, X_{(a_i),K}] \in \overline{M}^g_{0,1}$
by Corollary \ref{nodal-cor}. Let $\pi: X_{(a_i),K} \to \proj^1$ be the Weierstrass fibration and denote by $N_{a_i}$ the fibre of $\pi$
over $a_i$. Then $N_{a_i}$ is a nodal rational curve.

\begin{lem} \label{perm}
 Let $\sigma \in \mathfrak{S}_{12}$ be a permutation and let $(m_1, \ldots, m_{12})$ be any collection
of $12$ nonnegative integers such that $\sum {m_i} = g$. Then there is a path in $\overline{M}^g_{0,1}$ from 
$[S+\sum m_i N_{a_i}, X_{(a_i),K}]$ to $[S+ \sum m_i N_{\sigma(a_i)}, X_{(a_i),K}]$.
\end{lem}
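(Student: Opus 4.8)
The plan is to reduce to the case of a single transposition $\sigma = (j\;k)$, since $\mathfrak{S}_{12}$ is generated by transpositions and paths compose. Writing the curve as $S + m_j N_{a_j} + m_k N_{a_k} + (\text{fixed})$, I would then construct an explicit family of K3 surfaces over a base (say a small disc or $\mathbb{A}^1$ with finitely many points removed) in which the two singular fibres over $a_j$ and $a_k$ move: concretely, vary the tuple $(a_i)$ by letting $a_j(t)$ and $a_k(t)$ trace out loops in $\C$ that swap the two points while keeping all twelve points pairwise distinct, and correspondingly deform $K$ so that $p_t(s) = -2K(t)$ continues to have $12$ distinct roots. By Example \ref{nodal-eg} this gives a family of smooth elliptic K3 surfaces $X_{(a_i(t)),K(t)}$, hence a path in $W_g$, and over it the curve $S + \sum m_i N_{a_i(t)}$ is a relative rational curve in the relative linear system $|S+gE|$. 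By Corollary \ref{nodal-cor} each fibre $[S + \sum m_i N_{a_i(t)}, X_{(a_i(t)),K(t)}]$ lies in $\overline{M}^g_{0,1}$, so this family \emph{is} a path in $\overline{M}^g_{0,1}$.

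The key point to check is that the monodromy of the family of $12$-tuples can realize the transposition $\sigma$ \emph{and} return to the same base point $[S+\sum m_i N_{a_i}, X_{(a_i),K}]$, not merely to an isomorphic K3 surface with the roles of $N_{a_j}$ and $N_{a_k}$ interchanged. This is exactly what a loop in the configuration space $\mathrm{Conf}_{12}(\C)$ based at $(a_1,\dots,a_{12})$ whose underlying permutation of the points is $(j\;k)$ achieves: at the end of the loop the set $\{a_i\}$ is restored, but the fibre that started over $a_j$ now sits over $a_k$ and vice versa, so the divisor $S + \sum m_i N_{a_i}$ has been carried to $S + \sum m_i N_{\sigma(a_i)}$ on the \emph{same} surface $X_{(a_i),K}$. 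The auxiliary parameter $K$ can be kept fixed if the twelve roots of $p_t(s)+2K$ stay distinct throughout the loop; if not, one shrinks the loop or perturbs $K(t)$ along a loop as well and appeals again to Example \ref{nodal-eg} — either way the endpoints are unchanged because $K(0)=K(1)=K$.

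The main obstacle I anticipate is the bookkeeping needed to guarantee that \emph{both} discriminant conditions hold at every moment of the chosen loop: the twelve points $a_i(t)$ pairwise distinct (so that the fibration $X_{(a_i(t)),0}$-type data stays in minimal Weierstrass form with $12$ cuspidal-turned-nodal fibres of type $I_1$), and simultaneously the twelve roots of $\alpha_t + 2K x_1^{12}$ remaining distinct and disjoint from the $a_i(t)$. Both are open, codimension-one conditions, so a generic loop avoids the bad locus; making this rigorous amounts to choosing the loop inside the complement of a proper analytic subset of $\mathrm{Conf}_{12}(\C)$ (times the $K$-line), which is path-connected, and noting that $\pi_1$ of that complement still surjects onto $\mathfrak{S}_{12}$ since the braid group does and removing a further divisor does not kill the transpositions. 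Everything else — that the relative curve is rational on each fibre, that we stay in $\overline{M}^g_{0,1}$, that paths compose — follows from Corollary \ref{nodal-cor} and general nonsense.
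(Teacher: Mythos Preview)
Your approach is correct and is at heart the same as the paper's: move the tuple $(a_i)$ through the configuration space of $12$ distinct points to the permuted tuple $(\sigma(a_i))$, carrying along the marked rational curve. However, you are working harder than necessary. The paper observes directly that $X_{(a_i),K} = X_{(\sigma(a_i)),K}$ (the Weierstrass data depends only on $\prod(x_0 - a_i x_1)$, hence only on the underlying set), so the problem reduces to connecting the two \emph{ordered} tuples $(a_i)$ and $(\sigma(a_i))$ by a path in the ordered configuration space $Y \subset \C^{12}$. Since $Y$ is an irreducible variety it is connected, and composing such a path with the evident map $(b_i) \mapsto [S+\sum m_i N_{b_i}, X_{(b_i),K}]$ finishes the argument---no reduction to transpositions, no monodromy, no braid group. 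Your concern that $\pi_1$ of the complement still surjects onto $\mathfrak{S}_{12}$ after removing the bad locus is just a restatement of the connectedness of $Y$ (or of $Y$ minus a proper closed subset, still irreducible), so it is not wrong, merely circuitous. The care you take over the second discriminant condition (distinctness of the roots of $\alpha + 2K x_1^{12}$ along the path) is a point the paper's proof glosses over; your codimension argument handles it, and in the paper's framing it amounts to the same remark that removing a proper closed subset from $Y$ leaves it connected.
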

\begin{proof}
First note that from the construction of $X_{(a_i),K}$ we clearly have $X_{(a_i),K}=X_{(\sigma(a_i)),K}$.
Now let $Y \seq \C^{12}$ be the open subvariety consisting of pairwise distinct tuples $(b_i)$, this is
obtained by removing all diagonals. Since $Y$ is irreducible, hence connected, there is a path $\gamma (t) \in Y$
with $\gamma(0)=(a_i)$ and $\gamma(1)=(\sigma(a_i))$. For any nonzero $K \in \C$, and for fixed integers $(m_1, \ldots, m_{12})$
adding to $g$, we can define a morphism 
\begin{align*} 
H_{K, (m_i)} \; : \; Y &\to \overline{M}^g_{0,1} \\
        (b_i) &\mapsto [S+\sum m_i N_{b_i}, X_{(b_i),K}].
\end{align*}
The composition $H_{K, (m_i)} \circ \gamma (t)$ is then our required path.
\end{proof}

We set $\alpha_n:= e^{\frac{\pi in}{6}}$ for $n=1,2,\ldots,12$. These are the
$12$ distinct roots of $t^{12}-1$. Note that $t^{12}-1=-2K$ has $12$ distinct 
roots so long as $K \neq \frac{1}{2}$, and so $X_{(\alpha_i),K}$ is a smooth K3 surface
with $24$ nodal fibres so long as $K \neq 0, \frac{1}{2}$. We now investigate the 
smooth elliptic K3 surface $X_{(\alpha_i),0}$ from Proposition \ref{cusp-eg}. The associated Weierstrass fibration has $12$
irreducible singular fibres (which are all cuspidal, rational curves) over $\alpha_i$. In keeping with the
above notation, denote the singular fibre over $\alpha_i$ by $N_{\alpha_i}$. 
\begin{lem} \label{perm-cusp}
 Let $\sigma \in \mathfrak{S}_{12}$ be a permutation and let $(m_1, \ldots, m_{12})$ be any collection
of $12$ nonnegative integers such that $\sum {m_i} = g$. Then $[S+\sum m_i N_{\alpha_i}, X_{(\alpha_i),0}] \in \overline{M}^g_{0,1}$,
and further there is a path in $\overline{M}^g_{0,1}$ from 
$[S+\sum m_i N_{\alpha_i}, X_{(\alpha_i),0}]$ to $[S+ \sum m_i N_{\sigma(\alpha_i)}, X_{(\alpha_i),0}]$.
\end{lem}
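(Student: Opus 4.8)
The plan is to reduce this to the nodal case of Lemma~\ref{perm} by a degeneration argument. Applying Example~\ref{nodal-eg} with $(a_i)=(\alpha_i)$ the twelfth roots of unity produces for each $K$ the elliptic K3 surface $X_{(\alpha_i),K}$, and at $K=0$ this is precisely the cuspidal surface $X_{(\alpha_i),0}$ of Proposition~\ref{cusp-eg}. The point I would exploit is that for small $K\neq 0$ each type~$II$ fibre of $X_{(\alpha_i),0}$ over $\alpha_j$ deforms into an $I_1$ fibre over $\alpha_j$ (together with a new $I_1$ fibre nearby), so that $X_{(\alpha_i),K}$ becomes a \emph{smooth nodal} K3 surface, to which Corollary~\ref{nodal-cor} applies. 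Concretely, the discriminant of $X_{(\alpha_i),K}$ is $\Delta(A,B)=-432\,\alpha\,(\alpha+2Kx_1^{12})$ with $\alpha=\prod_i(x_0-\alpha_i x_1)$; since $\alpha+2Kx_1^{12}$ is $u_0^{12}-1+2K$ in the affine chart, for $0<|K|<\tfrac12$ the discriminant has $24$ distinct simple zeros and $X_{(\alpha_i),K}$ is a smooth elliptic K3 all of whose singular fibres are irreducible nodal rational curves, in particular the fibre $N_{\alpha_j}$ over each fixed $\alpha_j$; at $K=0$ this fibre is instead the cuspidal cubic $y^2z=x^3$. (Explicitly the fibre over $\alpha_j$ is $\{y^2z=x^3+\sqrt[3]{K^2}\,\delta\,xz^2+Kz^3\}$, whose own discriminant $-16K^2(4\delta^3+27)$ vanishes identically in $K$ because $\delta^3=-\tfrac{27}{4}$, so it is nodal for $K\neq0$ and cuspidal for $K=0$.)

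Next I would fix a disc $\Delta\ni 0$ in the $K$-plane with $\Delta\seq\{\,|K|<\tfrac12\,\}$ and work over it. The surfaces $X_{(\alpha_i),K}$, the section $S$, and the fibres over the fixed points $\alpha_1,\dots,\alpha_{12}$ form flat families over $\Delta$; after choosing a relative projective embedding (harmless, as $\Delta$ is contractible, exactly as in the proof of Lemma~\ref{perm}) I obtain, for any $\tau\in\mathfrak{S}_{12}$ and any nonnegative integers $(m_1,\dots,m_{12})$ with $\sum m_i=g$, a continuous map $\phi_\tau:\Delta\to|\mathcal{L}|$ sending $K$ to $[\,S+\sum_i m_i N_{\alpha_{\tau(i)}},X_{(\alpha_i),K}\,]$. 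For $K\in\Delta\setminus\{0\}$ the curve $S+\sum_i m_i N_{\alpha_{\tau(i)}}$ is a rational curve (all components rational) in $|S+gE|$ on the smooth nodal K3 surface $X_{(\alpha_i),K}$, hence $\phi_\tau(K)\in\overline{M}^g_{0,1}$ by Corollary~\ref{nodal-cor}; since $\overline{M}^g_{0,1}$ is closed in $|\mathcal{L}|$ and $\phi_\tau$ is continuous, also $\phi_\tau(0)=[\,S+\sum_i m_i N_{\alpha_{\tau(i)}},X_{(\alpha_i),0}\,]\in\overline{M}^g_{0,1}$. Taking $\tau=\mathrm{id}$ already gives the first assertion of the lemma.

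Finally, for the path I would pick any $K_0\in\Delta\setminus\{0\}$. Since $p(t)=\prod_i(t-\alpha_i)=t^{12}-1$ and $t^{12}=1-2K_0\neq0$ has $12$ distinct roots, Lemma~\ref{perm} applies to $X_{(\alpha_i),K_0}$ and yields a path in $\overline{M}^g_{0,1}$ from $[\,S+\sum m_i N_{\alpha_i},X_{(\alpha_i),K_0}\,]$ to $[\,S+\sum m_i N_{\sigma(\alpha_i)},X_{(\alpha_i),K_0}\,]$. Concatenating $\phi_{\mathrm{id}}$ restricted to the segment $[0,K_0]\seq\Delta$, then this path, then $\phi_\sigma$ restricted to $[0,K_0]$ traversed backwards, gives the required path in $\overline{M}^g_{0,1}$ from $[\,S+\sum m_i N_{\alpha_i},X_{(\alpha_i),0}\,]$ to $[\,S+\sum m_i N_{\sigma(\alpha_i)},X_{(\alpha_i),0}\,]$. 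The one genuinely substantive step is the first paragraph — that $X_{(\alpha_i),K}$ really is a smooth K3 with exactly the stated $I_1$ fibres for small $K\neq0$ while specialising at $K=0$ to the cuspidal surface of Proposition~\ref{cusp-eg} — and this is precisely what the normalisations $A=\sqrt[3]{K^2}\delta x_1^8$, $B=\alpha+Kx_1^{12}$ of Example~\ref{nodal-eg} were arranged to ensure; everything afterwards is just the closedness of $\overline{M}^g_{0,1}$ together with Lemma~\ref{perm}.
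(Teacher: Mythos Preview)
Your proof is correct and follows essentially the same approach as the paper: deform the cuspidal surface $X_{(\alpha_i),0}$ to the nodal surfaces $X_{(\alpha_i),K}$ for small $K\neq 0$, use closedness of $\overline{M}^g_{0,1}$ to get membership and the two radial paths, and invoke Lemma~\ref{perm} at a fixed nodal value of $K$ to connect the permuted configurations. The paper simply picks $K_0=\tfrac14$ and is terser about the verification that $X_{(\alpha_i),K}$ has $24$ nodal fibres, but the argument is the same.
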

\begin{proof}
We can write $[S+\sum m_i N_{\alpha_i}, X_{(\alpha_i),0}] \in |\mathcal{L}|$ as a limit $[S+\sum m_i N_{\alpha_i}, X_{(\alpha_i),t}] \in \overline{M}^g_{0,1}$
as $t \mapsto 0$ (for $t \leq \frac{1}{2}$). Since $\overline{M}^g_{0,1}$ is closed in $|\mathcal{L}|$ it follows that 
$[S+\sum m_i N_{\alpha_i}, X_{(\alpha_i),0}] \in \overline{M}^g_{0,1}$. Further, this shows that there is a path in $\overline{M}^g_{0,1}$ between
$[S+\sum m_i N_{\alpha_i}, X_{(\alpha_i),\frac{1}{4}}]$ and $[S+\sum m_i N_{\alpha_i}, X_{(\alpha_i),0}]$ and likewise a path between 
$[S+\sum m_i N_{\sigma(\alpha_i)}, X_{(\alpha_i),\frac{1}{4}}]$ and $[S+\sum m_i N_{\sigma(\alpha_i)}, X_{(\alpha_i),0}]$. But from Lemma \ref{perm}
there is a path from $[S+\sum m_i N_{\alpha_i}, X_{(\alpha_i),\frac{1}{4}}]$ to $[S+\sum m_i N_{\sigma(\alpha_i)}, X_{(\alpha_i),\frac{1}{4}}]$,
so putting these three paths together we get the required path from 
$[S+\sum m_i N_{\alpha_i}, X_{(\alpha_i),0}]$ to $[S+ \sum m_i N_{\sigma(\alpha_i)}, X_{(\alpha_i),0}]$.
\end{proof}

The next step is to connect any rational curve $C$ in $X_{(\alpha_i),0}$ to the non-reduced rational curve $S+g N_{\alpha_1}$. 
\begin{lem} \label{steptwo}
 Notation as above. Let $[C,X_{(\alpha_i),0}] \in \overline{M}^g_{0,1}$. There is a path in $\overline{M}^g_{0,1}$ from 
 $[C,X_{(\alpha_i),0}]$ to $[T+gN_{\alpha_1},X_{(\alpha_i),0}]$.
\end{lem}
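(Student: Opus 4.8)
The plan is to reduce the statement to a single ``mass-transfer'' move between two of the twelve cuspidal fibres, and to realise that move by composing the two elementary operations already developed in the paper: the degeneration of a nodal elliptic K3 surface to a cuspidal one (the limit $K\to 0$ of Example \ref{nodal-eg}, as in the proof of Lemma \ref{perm-cusp}), and the permutation of singular fibres effected by transporting a divisor around a loop in the parameter space (as in Lemma \ref{perm}).

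First I would record the shape of $C$: since $C\in|S+gE|$ is rational, the lemma describing $|S+gE|$ forces $C=S+\sum_{i=1}^{12}m_iN_{\alpha_i}$ with $m_i\geq 0$ and $\sum m_i=g$, the $N_{\alpha_i}$ being the cuspidal fibres of $X_{(\alpha_i),0}$. So it suffices to connect an arbitrary $[S+\sum m_iN_{\alpha_i},X_{(\alpha_i),0}]$ to $[S+gN_{\alpha_1},X_{(\alpha_i),0}]$, and by induction on $g-m_1$ it is enough, for each $j$ with $m_j\geq 1$, to produce a path to $[S+(m_1+1)N_{\alpha_1}+(m_j-1)N_{\alpha_j}+\sum_{i\neq 1,j}m_iN_{\alpha_i},X_{(\alpha_i),0}]$. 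Before doing this I would move to a \emph{generic} cusp configuration: taking a path in $Y$ (the space of $12$-tuples of distinct points, as in Lemma \ref{perm}) from $(\alpha_i)$ to a generic tuple $(a_i)$, running it through the smooth nodal surfaces $X_{(b_i),K}$ for a fixed small $K\neq 0,\tfrac12$ (after perturbing the path off the proper closed locus where $\prod_i(t-b_i)+2K$ acquires a multiple root), and then letting $K\to 0$ exactly as in Lemma \ref{perm-cusp}, gives a path in $\overline{M}^g_{0,1}$ from $[S+\sum m_iN_{\alpha_i},X_{(\alpha_i),0}]$ to $[S+\sum m_iN_{a_i},X_{(a_i),0}]$, and symmetrically a path from $[S+gN_{a_1},X_{(a_i),0}]$ back to $[S+gN_{\alpha_1},X_{(\alpha_i),0}]$; thus it remains to carry out the transfer on $X_{(a_i),0}$ for $(a_i)$ generic.

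For the transfer, recall that $X_{(a_i),K}$ (small $K\neq 0$) carries $24$ nodal fibres: those, $N_{a_i}$, over the $a_i$, and those, $N_{p_i}$, over the twelve roots $p_i(K)$ of $\prod_i(t-a_i)+2K$, with $p_i(K)\to a_i$ as $K\to 0$, so that both $N_{a_i}$ and $N_{p_i}$ specialise to the cuspidal fibre over $a_i$ on $X_{(a_i),0}$. By Corollary \ref{nodal-cor}, both of the divisors $D_0=S+\sum_{i\neq j}m_iN_{a_i}+(m_j-1)N_{a_j}+N_{p_j}$ and $D_0'=S+\sum_{i\neq j}m_iN_{a_i}+(m_j-1)N_{a_j}+N_{p_1}$ on $X_{(a_i),K_0}$ (fixed small $K_0$) represent points of $\overline{M}^g_{0,1}$; a short computation shows that the limit paths $K_0\to 0$ (whose endpoints lie in $\overline{M}^g_{0,1}$ since $\overline{M}^g_{0,1}$ is closed in $|\mathcal{L}|$) carry $[D_0,X_{(a_i),K_0}]$ to $[S+\sum m_iN_{a_i},X_{(a_i),0}]$ and $[D_0',X_{(a_i),K_0}]$ to $[S+(m_1+1)N_{a_1}+(m_j-1)N_{a_j}+\sum_{i\neq 1,j}m_iN_{a_i},X_{(a_i),0}]$ (here $m_1N_{a_1}+N_{p_1}$ specialises to $(m_1+1)$ times the cuspidal fibre over $a_1$). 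So the transfer reduces to joining $[D_0,X_{(a_i),K_0}]$ and $[D_0',X_{(a_i),K_0}]$, which differ only in replacing $N_{p_j}$ by $N_{p_1}$ — and this is supplied by monodromy: for generic $(a_i)$ the monodromy group of $\prod_i(t-a_i)+2K$ over the $K$-line with its finitely many critical values and the point $0$ removed is the full symmetric group $\mathfrak{S}_{12}$ (the cover is irreducible, hence the monodromy is transitive; the finite branch points are simple, hence the local monodromies are transpositions; and a transitive subgroup of $\mathfrak{S}_{12}$ generated by transpositions is everything). Transporting the divisor ``$S+\sum_{i\neq j}m_iN_{a_i}+(m_j-1)N_{a_j}+(\text{the nodal fibre over the moving root})$'' around a loop based at $K_0$, avoiding $0$ and the critical values, whose monodromy is the transposition $(p_1\ p_j)$, then gives — by Corollary \ref{nodal-cor}, since $X_{(a_i),K}$ stays a smooth K3 with $24$ distinct nodal fibres throughout — the required path from $[D_0,X_{(a_i),K_0}]$ to $[D_0',X_{(a_i),K_0}]$. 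Concatenating the three paths yields the transfer; the induction and the reinstatement of the cusp configuration then finish the proof.

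The step I expect to be the crux is this last one, and the reason is instructive. On a fixed elliptic K3 surface the rational curves in $|S+gE|$ are isolated, so a unit of multiplicity cannot be shifted ``in place''; and the naive remedy — colliding two nodal fibres of the Weierstrass family so that their multiplicities add — forces the merged fibre to become type $I_{2}$, i.e.\ introduces an $A_{1}$ singularity and takes us out of $W_g$. The device that works is to split each cuspidal fibre into a pair of nodal fibres (the degeneration $X_{(a_i),K}\rightsquigarrow X_{(a_i),0}$, already in hand) and then slide one member of such a pair past the others by looping \emph{around}, never \emph{through}, the forbidden value of $K$, so that the family never leaves the smooth locus; the only input from outside the paper is the classical fact that a general twelve-sheeted polynomial cover has symmetric monodromy.
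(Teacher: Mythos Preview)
Your argument is correct and shares the paper's overall architecture: write $C=S+\sum m_iN_{\alpha_i}$, lift to the nodal surface $X_{(\cdot),K}$ where each cuspidal fibre splits into a pair of nodal ones, transport one ``extra'' nodal fibre from position $j$ to position $1$ via a loop in the $K$-parameter, and let $K\to 0$ to merge back and effect the unit transfer of multiplicity.

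The one substantive difference is in how the loop is produced. You first pass to a generic tuple $(a_i)$ and then invoke the classical fact that a generic degree-$12$ polynomial cover of $\mathbb{A}^1$ has full symmetric monodromy. This is valid, but the detour is unnecessary: the paper stays at the roots-of-unity configuration and exploits its rotational symmetry directly. On $X_{(\alpha_i),K}$ the twelve extra nodal fibres lie over $\beta(K)\alpha_1,\ldots,\beta(K)\alpha_{12}$, which are related by multiplication by $e^{\pi i/6}$; the explicit path $s\mapsto\psi(s)\beta(K)\alpha_1$ with $\psi(s)=e^{\pi i s/6}$, together with the adjustment $K\mapsto\tfrac12\bigl(1-(\psi(s)\beta(K))^{12}\bigr)$, writes the transport down by hand. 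In monodromy language this is exactly the loop around the unique branch point $K=\tfrac12$ of the cover $t\mapsto-\tfrac12(t^{12}-1)$, whose monodromy is the $12$-cycle --- and transitivity is all that is needed once Lemma~\ref{perm-cusp} is available. So your abstract argument and the paper's explicit one are two views of the same phenomenon; the paper's is shorter and more self-contained, while yours makes the underlying mechanism (monodromy of the extra roots) transparent and would adapt to configurations without special symmetry.
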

\begin{proof}
Let $\alpha_n:=e^{\frac{\pi i n}{6}}$, $n=1, 2, \ldots, 12$ be the $12$-th roots of unity.
We have that $C=S +\sum_{i=1}^{12} m_i N_{\alpha_i}$ with $\sum m_i=g$. Due to Lemma \ref{perm-cusp}, we may assume 
$m_1 \geq m_2 \geq \ldots \geq m_{12}$. If $m_2=0$ then we are done, so assume this is not the case. For $K$ close to zero, let $\beta(K)$ be a continuous
choice of solutions to $t^{12}=1-{2K}$ such that $\beta(K) \to 1$ as $K \to 0$.
Let $C_{K}$ be the rational curve $$S+m_1N_{\alpha_1}+N_{\beta(K) \alpha_2}+(m_2-1)N_{\alpha_2}+\sum_{i=3}^{12} m_i N_{\alpha_i} \; \in X_{(\alpha_i),K}$$ 
where we have extended out notation so that $N_{\gamma}$ denotes the singular fibre over $\gamma$ for those $\gamma \in \C$ with 
either $\gamma^{12}=1$ or $\gamma^{12}=1-2K$. Then 
 $$[C_{K},X_{(\alpha_i),K}] \longrightarrow [C,X_{(\alpha_i),0}] \in \overline{M}^g_{0,1} \; \; \text{as $K \to 0$}.$$ 
Now let $D_{K}$ denote the rational curve 
$$S+m_1N_{\alpha_1}+N_{\beta(K) \alpha_1}+(m_2-1)N_{\alpha_2}+\sum_{i=3}^{12} m_i N_{\alpha_i} \; \in X_{(\alpha_i),K}$$ and let $C'$ denote the curve 
$C+N_{\alpha_1}-N_{\alpha_2}$ in $X_{(\alpha_i),0}$.
We have $$[D_{K}, X_{(\alpha_i),K}] \longrightarrow [C',X_{(\alpha_i),0}] \in D \; \; \text{as $K \to 0$}.$$
It is enough to show that there is a path
from $[C_{K}, X_{(\alpha_i),K}]$ to $[D_{K}, X_{(\alpha_i),K}]$, since we would then have a path from $[C,X_{(\alpha_i),0}]$ to $[C',X_{(\alpha_i),0}]$, 
and repeating the process leads to the
desired path.

\begin{figure}[h]
 \centering
 \includegraphics{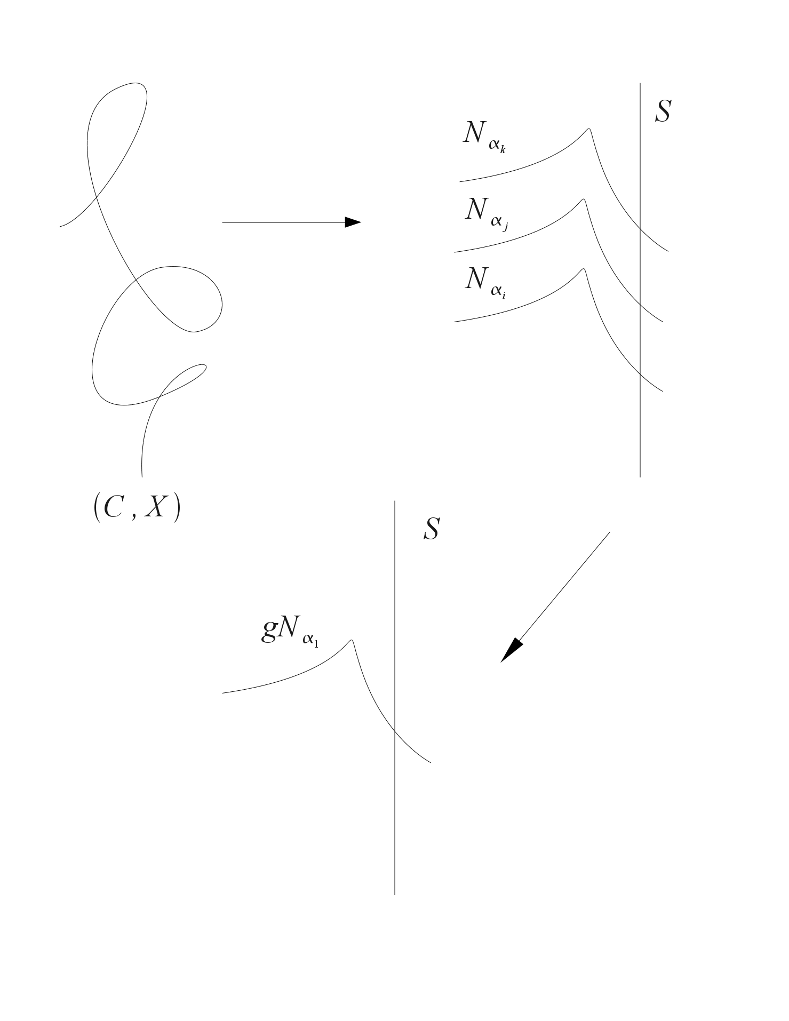}
\caption{Deforming a curve $C$ on a elliptic K3 surface $X$ of genus $g=3$ to $[S+gN_{\alpha_1},X_{(\alpha_i),0}]$}
\label{defo}
\end{figure}

We now finish the proof by constructing a path from $[C_{K}, X_{(\alpha_i),K}]$ to $[D_{K}, X_{(\alpha_i),K}]$ for small but fixed $K$.  
Set $\psi(s)=e^{\frac{\pi i s}{6}}$
and consider $X_{(\alpha_i),\frac{1}{2}(1-(\psi(s) \beta(K))^{12})}$, which for $s=0$ and $s=1$ is just $X_{(\alpha_i),K}$. Setting
$$F(s)=S+m_1N_{\alpha_1}+N_{\psi(s) \beta(K) \alpha_1}+(m_2-1)N_{\alpha_2}+\sum_{i=3}^{12} m_i N_{\alpha_i} \subseteq 
X_{(\alpha_i),\frac{1}{2}(1-(\psi(t) \beta(K))^{12})}$$ we have
that $\phi(s):=[F(s), X_{(\alpha_i),\frac{1}{2}(1-(\psi(s) \beta(K))^{12})}] \in \overline{M}^g_{0,1}$, and $F(0)=D_K$, $F(1)=C_K$. Hence
$[0,1] \to \overline{M}^g_{0,1}$, $s \mapsto \phi(s)$ gives the required path from $[C_{K}, X_{(\alpha_i),K}]$ to $[D_{K}, X_{(\alpha_i),K}]$.
\end{proof}

We now come to our main result.
\begin{thm}
The space $\overline{M}^g_{0,1}$ is connected.
\end{thm}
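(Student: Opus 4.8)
The plan is to reduce the connectedness of $\overline{M}^g_{0,1}$ to a statement about a single, explicit basepoint, namely $[S+gN_{\alpha_1}, X_{(\alpha_i),0}]$ on the elliptic K3 surface with $12$ cuspidal fibres constructed in Proposition~\ref{cusp-eg}. Since $\overline{M}^g_{0,1}$ is a scheme of finite type over $\C$, it has finitely many connected components, and it suffices to show that every point lies in the same component as this basepoint. I would argue in two stages, exactly as advertised in the introduction: (1) deform an arbitrary point of $\overline{M}^g_{0,1}$ to a rational curve on some elliptic K3 surface with $12$ cuspidal fibres; (2) connect any such curve to the fixed basepoint.

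For stage (1), let $(C,X,L) \in \overline{M}^g_{0,1}$ be arbitrary. By the first lemma of Section~\ref{sect:main}, $C$ has rational components. The key input is Corollary~\ref{unram-surj}: if $f\colon D \to X$ is an unramified stable map representing $C$, then every component of $T(\mathcal{X}/Def(X),B)$ through $f$ surjects onto $Def(X,L)$, so $f$ deforms to a stable map on a \emph{generic} polarized K3 surface of genus $g$, hence (via the morphism from the seminormalization of $T(\mathcal{X}/W_g,\mathcal{L})$ to $\overline{M}^g_{0,1}$ used in the proof of the unramified-representative proposition) the point $(C,X,L)$ lies in the same connected component of $\overline{M}^g_{0,1}$ as some rational curve on an elliptic K3 surface with $12$ cuspidal fibres; here one travels within $W_g$ from a generic point to a point lying over such an elliptic surface, using irreducibility of $W_g$, and then specializes the curve. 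The one genuine subtlety is that an \emph{arbitrary} $(C,X,L)$ in the closure need not itself admit an unramified representative; so instead I would first move $(C,X,L)$ slightly so that it lies in the image of $M^g_{0,1}$ or at least admits such a representative — more precisely, work with a point of $T(\mathcal{X}/W_g,\mathcal{L})$ mapping to it, deform that stable map so it becomes unramified and/or lies over a generic surface, and invoke properness of $T(\mathcal{X}/W_g,\mathcal{L}) \to W_g$ together with connectedness of fibres travelled through. This is the step I expect to be the main obstacle: carefully justifying that every point of $\overline{M}^g_{0,1}$, not merely the ones with unramified representatives, can be connected by a path (or chain of specializations within connected components) to a cuspidal-fibre elliptic example.

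For stage (2), all the work is already done in Lemmas~\ref{perm-cusp} and~\ref{steptwo}. Given a rational curve $C$ on an elliptic K3 surface $Y$ with $12$ cuspidal singular fibres, I would first move $Y$ within its family of such surfaces to the model $X_{(\alpha_i),0}$ of Proposition~\ref{cusp-eg} (the $12$ cusp points can be slid to the $12$-th roots of unity along a path in the configuration space $Y \subseteq \C^{12}$ of distinct tuples, which is irreducible, carrying the curve along as in the proof of Lemma~\ref{perm}); this places us at a point $[C, X_{(\alpha_i),0}] \in \overline{M}^g_{0,1}$ with $C = S + \sum m_i N_{\alpha_i}$, $\sum m_i = g$. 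Lemma~\ref{steptwo} then produces a path in $\overline{M}^g_{0,1}$ from $[C,X_{(\alpha_i),0}]$ to the basepoint $[S+gN_{\alpha_1},X_{(\alpha_i),0}]$. Concatenating the path from stage (1), the sliding path, and the path from Lemma~\ref{steptwo} shows that the component of $(C,X,L)$ contains the basepoint. Since $(C,X,L)$ was arbitrary, $\overline{M}^g_{0,1}$ is connected, which also gives connectedness of $M^g_{0,1}$ and hence of $\mathcal{V}^g_{0,1}$ and $\overline{\mathcal{V}}^g_{0,1}$, proving Theorem~\ref{mainthm}.
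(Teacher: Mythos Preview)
Your two-stage outline is exactly the strategy of the paper, and stage~(2) is essentially correct (in fact slightly more elaborate than needed: the paper lands directly on $X_{(\alpha_i),0}$, so no separate ``sliding'' of the twelve cusp points is required). The problem is stage~(1), where you yourself flag the obstacle and then do not actually resolve it. Your proposed fix --- pass to a stable map, deform it until it becomes unramified, and use Corollary~\ref{unram-surj} --- is circular in spirit: to know you can move the stable map within the right component you would already need some control on the component structure, and there is no result in the paper guaranteeing that an arbitrary stable map deforms to an unramified one.

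The paper sidesteps this entirely, and the argument is much simpler than anything involving unramified representatives. The key is that $p|_{M^g_{0,1}}\colon M^g_{0,1}\to W_g$ is \emph{smooth} (hence flat and open), and $W_g$ is irreducible. Therefore every irreducible component $I_0$ of $M^g_{0,1}$ dominates $W_g$; taking closures and using that $p\colon\overline{M}^g_{0,1}\to W_g$ is proper, every irreducible component $I=\overline{I_0}$ of $\overline{M}^g_{0,1}$ \emph{surjects} onto $W_g$. In particular every connected component of $\overline{M}^g_{0,1}$ meets the fibre over $(X_{(\alpha_i),0},S+gE)$, i.e.\ contains a point of the form $[C,X_{(\alpha_i),0}]$. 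Lemma~\ref{steptwo} then connects all such points, so distinct connected components would share a point, contradiction. No pointwise deformation of an arbitrary $(C,X,L)$ is needed; the role of Corollary~\ref{unram-surj} in the paper was only to show that the specific elliptic curves with nodal fibres lie in $\overline{M}^g_{0,1}$ (Corollary~\ref{nodal-cor}), not to run stage~(1).
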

\begin{proof}
Let $W_g$ be the open subscheme of the Hilbert scheme parametrising genus $g$ polarized
K3 surfaces $(X,L)$. Then we have a proper projection map $p: \overline{M}^g_{0,1} \to W_g$, which is
smooth when restricted to $M^g_{0,1}$. Now let $I$ 
be an irreducible component of $\overline{M}^g_{0,1}$. Then $I=\overline{I_0}$ is the closure of
an irreducible component $I_0$ of $M^g_{0,1}$ in $|\mathcal{L}|$, and since $p|_{M^g_{0,1}}$
is flat and $W_g$ is irreducible, $\overline{p(I_0)}=W_g$. Thus $p(I)=W_g$, since $p$ is proper.

In particular, each connected component $B$ of $\overline{M}^g_{0,1}$ must contain at least one point
of the form $[C,X_{(\alpha_i),0}]$ for $C$ a curve with rational components in $|S+gE|$. But then by the above lemma 
$B$ contains \emph{all} such points. Since this applies to each connected component, and the connected components
are pairwise disjoint, there can only be one connected component.  
\end{proof}

\appendix
\chapter{The non-rational case} \label{sect:hilb}
In this section we use the Hilbert scheme of points to give a proof of the irreducibility of $V_{h}(X,kL)$ for curves with high geometric genus $h$. Let $S$ be a smooth
projective surface. We denote by $S^{[n]}$ the Hilbert scheme of zero dimensional closed subschemes of $S$ of length $n$. This 
is a compactification of the space of all unordered $n$-tuples of distinct points in $S$. Then $S^{[n]}$ is a smooth, irreducible, projective 
variety of dimension $2n$. For an introduction to the Hilbert scheme of points, see \cite[Ch.\ 1.1]{lehn}.

Let $S^{[3m]}$ denote the Hilbert scheme of points of length $3m$. Let $I_m \seq S^{[3m]}$ denote the closure
of the set with elements of the form $\coprod_{i=1}^m Spec (\mathcal{O}_{x_i}/m^2_{S,x_i})$ for $x_1, \ldots, x_m$ distinct points in $S$. 
Then $I_m$ is birational to the space $S^{[m]}$, by mapping a subscheme $Z$ to its reduction. Hence $I_m$ is irreducible. This space is important
to us for the following reason. Let $L \in Pic(S)$ be a line bundle, and $s \in H^0(S,L)$ a global section with $D=Z(s)$ the divisor of zeroes. 
Then $D$ has at least $m$ distinct singularities exactly when there exists some
 $[Z] \in I_m$ of the form $\coprod_{i=1}^m Spec (\mathcal{O}_{x_i}/m^2_{S,x_i})$ with $x_1, \ldots, x_m$ distinct and such that
 $s|_{Z}=0$. These singularities are generically nodes, but they
 may be worse.
     
We now recall the notion of a $k$ very ample vector bundle. A vector bundle $L \in Pic(S)$ is
said to be $k$ very ample if for each zero-dimensional subscheme $Z \seq S$ of length $k+1$, the restriction morphism
\[
 H^0(S,L) \to H^0(S, L \otimes \mathcal{O}_Z)
\]
is surjective. One can show that if $L$ is $k$ very ample for $k \geq 1$, then $L^{\otimes n}$ is $kn$ very ample. For more
on this topic, see \cite[Ch.\ 5]{goettsche}.

Let $L \in Pic(S)$ be a $3k-1$ very ample line bundle. Consider the closed subvariety $\mathcal{N}_m \seq I_m \times \proj(H^0(L))$ which parametrises pairs $(Z,s)$
with $s|_{Z}=0$. This can be constructed as a projective bundle over $I_m$ with fibre over $[Z]$ isomorphic to $\proj(Ker(H^0(S,L) \to H^0(S,L \otimes \mathcal{O}_Z)))$.
The latter is a constant dimensional projective space from the assumption that $L$ is $3k-1$ very ample. Let $\mathcal{Z} \seq I_m \times S$ be the universal
closed subscheme associated to $I_m$ and consider the diagram
$$\begin{CD}
I_m \times S @>p>> S\\
@VqVV \\
I_m
\end{CD}$$ where $p$ and $q$ are the projections. Define $\mathcal{E}:= q_*(Ker(p^*L \to p^*L \otimes \mathcal{O}_{\mathcal{Z}}))$. Then
$\mathcal{E}$ is a vector bundle on $I_m$ and $\mathcal{N}_m$ is defined at $\textbf{Proj}(\text{Sym}(\mathcal{E}^*))$, where $\mathcal{E}^*$
is the dual bundle to $\mathcal{E}$. 
Since $I_m$ is irreducible, we see that $\mathcal{N}_m$ is also irreducible. Indeed, $\mathcal{N}_m \to I_m$ is a flat, surjective
morphism with irreducible fibres.

To simplify the notation, we define $V^{m}(S,kL)$ as the Severi variety of irreducible curves $C \seq S$ in the linear
system $|kL|$ with exactly $m$ nodes, so that $V_{1+k^2(g-1)-m}(S,kL)=V^{m}(S,kL)$. 
\begin{thm}
 Suppose $S$ is a projective surface and $L \in Pic(S)$ is $3k-1$ very ample. Then the Severi variety $V^{m}(S,L)$ of irreducible curves $C \seq S$ in the linear
system $|L|$ with exactly $m \leq k$ nodes and no other singularities is irreducible.
\end{thm}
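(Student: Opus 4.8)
The plan is to realise $V^{m}(S,L)$ as a nonempty open subvariety of the irreducible variety $\mathcal{N}_m$ constructed above, so that irreducibility is immediate. Since $\mathcal{N}_m \to I_m$ is a flat surjection with irreducible fibres over the irreducible base $I_m$, the variety $\mathcal{N}_m$ is irreducible; write $q \colon \mathcal{N}_m \to I_m$ and $\pi \colon \mathcal{N}_m \to \proj(H^0(S,L))$ for the two projections, so that $\pi(Z,s) = s$. If $V^m(S,L)$ is empty there is nothing to prove, so assume it is not.

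Let $I_m^{\circ} \seq I_m$ be the dense locus of subschemes of the form $\coprod_{i=1}^{m}\mathrm{Spec}(\mathcal{O}_{S,x_i}/\mathfrak{m}^2_{S,x_i})$ with $x_1,\dots,x_m$ distinct, and let $U \seq \mathcal{N}_m$ be the locally closed subset of pairs $(Z,s)$ with $[Z] \in I_m^{\circ}$, with $D_s := Z(s) \seq S$ irreducible, and with $D_s$ having exactly $m$ singular points, each an ordinary node. For $(Z,s) \in U$ the condition $s|_Z = 0$ forces $s$ to vanish to order $\geq 2$ at each point of $\mathrm{supp}(Z)$, so $D_s$ is singular there; since $D_s$ has exactly $m$ singular points, these are \emph{precisely} $\mathrm{supp}(Z)$, and $Z$ is the scheme of first-order neighbourhoods of the nodes of $D_s$. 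Hence $\pi|_U$ is injective, and $\pi(U) \seq V^m(S,L)$ by construction. Conversely every $C \in V^m(S,L)$ lies in $\pi(U)$: putting $Z_C := \coprod \mathrm{Spec}(\mathcal{O}_{S,x_i}/\mathfrak{m}^2_{S,x_i})$ over the $m$ nodes $x_i$ of $C$, one has $[Z_C] \in I_m^{\circ}$ and $s_C|_{Z_C} = 0$, hence $(Z_C,s_C) \in U$ and $\pi(Z_C,s_C) = C$. Thus $\pi|_U \colon U \to V^m(S,L)$ is a bijective morphism — in fact an isomorphism, with inverse $C \mapsto (Z_C,s_C)$, using that $C \mapsto Z_C$ is a morphism $V^m(S,L) \to I_m$.

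To finish it suffices to prove that $U$ is open in $\mathcal{N}_m$: then $U$ is a nonempty open subset of the irreducible space $\mathcal{N}_m$, hence irreducible, and therefore $V^m(S,L) = \pi(U)$ is irreducible. Since $U$ is locally closed, it is enough to show $U$ is \emph{dense}, i.e. that a general pair $(Z,s) \in \mathcal{N}_m$ lies in $U$. For $Z = \coprod\mathrm{Spec}(\mathcal{O}_{S,x_i}/\mathfrak{m}^2_{S,x_i})$ with $x_1,\dots,x_m$ general and a general $s \in \proj\big(\ker(H^0(S,L) \to H^0(S,L\otimes\mathcal{O}_Z))\big)$, the second-order jet of $s$ at each $x_i$ is a general, hence nondegenerate, quadratic form, so $D_s$ has an ordinary node at $x_i$; and a Bertini-type argument shows $D_s$ is smooth away from $\mathrm{supp}(Z)$ and irreducible. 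Here the hypothesis that $L$ is $3k-1$ very ample, together with $m \leq k$, is exactly what is needed: since $Z$ has length $3m \leq 3k$, it controls the dimension of $H^0$ of $L$ twisted by the relevant ideal sheaves, guaranteeing that the system of sections vanishing on $Z$ still separates points away from $\mathrm{supp}(Z)$ and has an irreducible general member.

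I expect this last paragraph to be the only real difficulty. The formal part — irreducibility of $\mathcal{N}_m$, the bijection $U \cong V^m(S,L)$, and the passage from "$\mathcal{N}_m$ irreducible" to "$V^m(S,L)$ irreducible" — is routine; the genuine work, and the only place where $3k-1$ very ampleness and the bound $m \leq k$ enter, is showing that a general member of $\mathcal{N}_m$ acquires no unexpected singularities and remains irreducible.
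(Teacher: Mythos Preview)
Your overall strategy is exactly the paper's: identify $V^{m}(S,L)$ with a subset $U\subseteq\mathcal{N}_m$ via $C\mapsto(Z_C,s_C)$ and use the irreducibility of $\mathcal{N}_m$. The paper's proof is the single sentence ``$V^{m}(S,L)$ is the open set of $\mathcal{N}_m$ parametrising irreducible curves with exactly $m$ nodes.''

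Where you diverge is in the proof that $U$ is open. You argue: $U$ is locally closed, so it suffices to show $U$ is dense, and for this you invoke a Bertini-type statement about the general member of the linear system $|L\otimes\mathcal{I}_Z|$. This detour is both unnecessary and not justified by the hypothesis. For the Bertini step you would need, at minimum, that sections vanishing on $Z$ still separate a point $p\notin\mathrm{supp}(Z)$; that asks for surjectivity of $H^0(L)\to H^0(L\otimes\mathcal{O}_{Z\cup\{p\}})$ with $\mathrm{length}(Z\cup\{p\})=3m+1$, and for $m=k$ this exceeds the $3k$ guaranteed by $(3k-1)$-very ampleness. So the ``only real difficulty'' you flag is a self-imposed one, and in the boundary case $m=k$ your sketch does not close.

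Openness of $U$ is immediate without density. Set
\[
U_1=q^{-1}(I_m^{\circ}),\qquad U_2=\pi^{-1}\bigl(\{\,D\in|L|:\ D\ \text{irreducible, nodal, with at most $m$ nodes}\,\}\bigr).
\]
Both are open: $I_m^{\circ}$ is open in $I_m$ since the number of support points is lower semicontinuous and is $\le m$ on all of $I_m$; the second set is open in $|L|$ by upper semicontinuity of singularities. For $(Z,s)\in U_1\cap U_2$ the condition $s|_Z=0$ forces $D_s$ to be singular at each of the $m$ distinct points of $\mathrm{supp}(Z)$, so ``at most $m$ nodes'' becomes ``exactly $m$ nodes, located at $\mathrm{supp}(Z)$''. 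Hence $U=U_1\cap U_2$ is open, and your bijection $U\to V^m(S,L)$ finishes the proof. The hypotheses $m\le k$ and $(3k-1)$-very ampleness are used only where the paper uses them: to make $\mathcal{N}_m\to I_m$ a genuine projective bundle, hence $\mathcal{N}_m$ irreducible.
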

\begin{proof}
In fact $V^{m}(S,L)$ is the open set of $\mathcal{N}_m$ parametrising irreducible curves with exactly $m$ nodes.
\end{proof}

The next proposition studies the $k$ amplitude of ample vector bundles on generic K3 surfaces of genus $g$. 
Note that the generic $K3$ surface $S$ has $Pic(S) \simeq \mathbb{Z}$. Let $L$ 
be an ample, primitive line bundle on $S$, then $(L \cdot L)=2g-2$. Any curve in the linear system $|kL|$ has arithmetic genus $p_a=1+k^2 (g-1)$.
\begin{prop}
 Let $S$ be a K3 surface with $Pic(S) \simeq \mathbb{Z}$, and let $L \in Pic(S)$ be an effective generator of $Pic(S)$. Suppose $(L \cdot L) \geq 4k$. Then $L$ is 
$k$ very ample.
\end{prop}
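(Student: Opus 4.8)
The plan is to argue by contradiction, using the Serre construction together with the Bogomolov-type bound $\langle v(\mathcal{E}),v(\mathcal{E})\rangle\geq -2$ for simple sheaves on a K3 surface, and exploiting that $\mathrm{Pic}(S)=\mathbb{Z}L$ leaves no room for a destabilizing sub-line-bundle.

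First I would reformulate $k$ very ampleness cohomologically. Since $L$ is ample on a K3 surface, $H^1(S,L)=0$, so for a zero-dimensional $Z\seq S$ the restriction $H^0(S,L)\to H^0(Z,L|_Z)$ is surjective if and only if $H^1(S,\mathcal{I}_Z\otimes L)=0$. If $L$ were \emph{not} $k$ very ample there would be a zero-dimensional $Z$ with $h^1(\mathcal{I}_Z\otimes L)>0$, and I would take one of minimal length $\ell$; then automatically $\ell\leq k+1$ and $h^1(\mathcal{I}_{Z'}\otimes L)=0$ for every proper subscheme $Z'\subsetneq Z$. For each colength-one $Z'\subset Z$ the sequence $0\to\mathcal{I}_Z\otimes L\to\mathcal{I}_{Z'}\otimes L\to\mathcal{F}\to 0$, with $\mathcal{F}$ a length-one skyscraper, shows, since $h^1(\mathcal{I}_{Z'}\otimes L)=0<h^1(\mathcal{I}_Z\otimes L)$, that every section of $L$ vanishing on $Z'$ vanishes on $Z$; that is, $Z$ satisfies the Cayley--Bacharach condition with respect to $|K_S+L|=|L|$.

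Next I would apply the Serre construction: this Cayley--Bacharach property produces a locally free rank-two sheaf $\mathcal{E}$ fitting into $0\to\mathcal{O}_S\to\mathcal{E}\to\mathcal{I}_Z\otimes L\to 0$, so $c_1(\mathcal{E})=L$ and $c_2(\mathcal{E})=\ell$. The key point is that $\mathcal{E}$ is $\mu_L$-stable: a rank-one subsheaf, after saturation, is a line bundle $\mathcal{O}_S(dL)$ (here $\mathrm{Pic}(S)=\mathbb{Z}L$ is used), which can destabilize $\mathcal{E}$ only if $dL^2\geq \tfrac12 L^2$, i.e. $d\geq 1$; but twisting the defining sequence by $\mathcal{O}_S(-dL)$ and taking cohomology gives $H^0(\mathcal{E}(-dL))=0$ for all $d\geq 1$, since both $\mathcal{O}_S((1-d)L)$ and $\mathcal{I}_Z$ have no global sections. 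Hence no such subsheaf exists, so $\mathcal{E}$ is $\mu_L$-stable and in particular simple. Then I would compute the Mukai vector $v(\mathcal{E})=\mathrm{ch}(\mathcal{E})\sqrt{\mathrm{td}_S}=(2,\,L,\,2+\tfrac12 L^2-\ell)$, so that $\langle v(\mathcal{E}),v(\mathcal{E})\rangle=L^2-4\bigl(2+\tfrac12 L^2-\ell\bigr)=4\ell-L^2-8$. With $\ell\leq k+1$ and $L^2\geq 4k$ this is at most $4(k+1)-4k-8=-4$, contradicting the inequality $\langle v(\mathcal{E}),v(\mathcal{E})\rangle=\dim\mathrm{Ext}^1(\mathcal{E},\mathcal{E})-2\geq -2$ (Serre duality gives $\mathrm{Ext}^2(\mathcal{E},\mathcal{E})\cong\mathrm{Hom}(\mathcal{E},\mathcal{E})^\vee$, and Mukai's formula identifies $-\langle v,v\rangle$ with $\chi(\mathcal{E},\mathcal{E})$). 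This contradiction forces $L$ to be $k$ very ample.

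The step I expect to require the most care is extracting the precise Cayley--Bacharach statement from the minimality of $Z$ and checking that the Serre construction genuinely yields a \emph{locally free} $\mathcal{E}$; by contrast the stability of $\mathcal{E}$, though essential, reduces to a one-line computation precisely because $\mathrm{Pic}(S)=\mathbb{Z}L$ is so restrictive. As a fallback, the same conclusion can be deduced by citing the classification of $k$-very ample line bundles on K3 surfaces (Knutsen, or Reider-type arguments of Beltrametti--Sommese), since the exceptional effective divisors that appear there cannot exist when the Picard number is one.
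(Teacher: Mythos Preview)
Your argument is correct and essentially reproduces, in the special case $\mathrm{Pic}(S)=\mathbb{Z}L$, the Reider-type proof underlying the criteria you cite at the end. The paper, by contrast, does not argue directly: it simply invokes Knutsen's numerical criterion for $k$-very ampleness on K3 surfaces, and observes that the only effective divisors available are positive multiples $D\sim mL$, for which the inequality $2(D\cdot D)>(D\cdot L)$ is automatic. So your fallback is exactly the paper's proof, while your main argument unpacks (a version of) what that citation hides: the Serre construction, stability of the resulting rank-two bundle forced by Picard number one, and Mukai's bound $\langle v,v\rangle\geq -2$ for simple sheaves. The trade-off is transparency versus brevity; your route is self-contained and makes clear why Picard rank one is the decisive hypothesis, whereas the paper's one-line reduction outsources all of this.

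One minor expository point: in your stability check you write that ``both $\mathcal{O}_S((1-d)L)$ and $\mathcal{I}_Z$ have no global sections'', but for $d=1$ the first factor is $\mathcal{O}_S$; what you actually need is that $H^0(\mathcal{I}_Z\otimes\mathcal{O}_S((1-d)L))=0$, which follows because $\mathcal{I}_Z$ has no sections when $d=1$ and $(1-d)L$ is anti-effective when $d\geq 2$. This does not affect the validity of the argument.
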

\begin{proof}
 This follows from the criterion of \cite[Thm 1.1]{knut}, since for any effective line bundle $D \in Pic(S)$ we have $D \sim m L$ for some $m \geq 1$ and so
$2(D \cdot D) > (D \cdot L)$.
\end{proof}
\begin{cor} \label{nonrat-cor}
  Let $S$ be a K3 surface of genus $g$ with $Pic(S) \simeq \mathbb{Z}$, and let $L \in Pic(S)$ be an effective and primitive line bundle. 
  Then $V_{h}(S,kL)$ is irreducible or empty
for $h \geq \frac{k(6k-1)(g-1)+4}{6}$. 
\end{cor}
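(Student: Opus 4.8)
The plan is to obtain Corollary~\ref{nonrat-cor} as a direct consequence of the irreducibility theorem above, the Proposition above on $k$-very ampleness of the ample generator of $\mathrm{Pic}(S)$, and the fact recalled in the text that the $n$-th power of a $k$-very ample line bundle with $k \geq 1$ is $kn$-very ample. The only real content beyond citing these is an elementary translation of the bound on the geometric genus $h$ into a bound on the number of nodes, together with the numerical check that this bound is exactly strong enough.

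First I would record the dictionary between genus and nodes. A curve in $|kL|$ has arithmetic genus $p_a = 1 + k^2(g-1)$, so an irreducible nodal curve in $|kL|$ of geometric genus $h$ has exactly $m := 1 + k^2(g-1) - h$ nodes, and $V_h(S,kL) = V^m(S,kL)$ in the notation of this appendix. A one-line computation shows that the hypothesis $h \geq \frac{k(6k-1)(g-1)+4}{6}$ is equivalent to $m \leq \frac{k(g-1)+2}{6}$, and hence to $3m - 1 \leq \frac{k(g-1)}{2}$.

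Next I would produce the very ampleness needed to invoke the theorem. Since $L$ is effective and primitive on a K3 surface with $\mathrm{Pic}(S) \simeq \mathbb{Z}$, it is the ample generator and $(L \cdot L) = 2g-2$; by the Proposition, $L$ is $k'$-very ample for every integer $k'$ with $4k' \leq 2g-2$, so, taking $k'$ maximal, $L$ is $\lfloor (g-1)/2 \rfloor$-very ample, a value which is at least $1$ because $g>2$. By the power fact, $kL = L^{\otimes k}$ is then $k\lfloor (g-1)/2\rfloor$-very ample, hence $(3m-1)$-very ample as soon as $3m - 1 \leq k\lfloor (g-1)/2\rfloor$; when $g$ is odd this is exactly the inequality $3m - 1 \leq \frac{k(g-1)}{2}$ produced in the previous step. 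Applying the irreducibility theorem with the line bundle $kL$ and node number $m$, which is legitimate since $kL$ is $(3m-1)$-very ample, we conclude that $V^m(S,kL)$, the space of irreducible curves in $|kL|$ with exactly $m$ nodes and no worse singularities, is irreducible or empty, and this space is precisely $V_h(S,kL)$.

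I expect the one genuinely delicate point to be the very ampleness of $kL$ when $g$ is even and $k$ is large, where $\lfloor (g-1)/2\rfloor$ is strictly less than $(g-1)/2$ and the power fact falls just short; there one should instead appeal directly to Knutsen's $k$-very ampleness criterion \cite{knut} for $kL$, whose numerical hypothesis $(kL)^2 = k^2(2g-2) \geq 4(3m-1)$ is comfortably implied by $3m - 1 \leq \frac{k(g-1)}{2}$ and whose remaining conditions are immediate because every effective divisor on $S$ is a positive multiple of $L$. All the other steps use only results already established.
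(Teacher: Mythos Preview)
Your proof is correct and follows essentially the same route as the paper: convert the bound on $h$ into the node bound $m \leq \frac{k(g-1)+2}{6}$, i.e.\ $3m-1 \leq \frac{k(g-1)}{2}$, establish that $kL$ is $(3m-1)$-very ample via the Proposition and the multiplicativity of $k$-very ampleness under tensor powers, and then invoke the irreducibility theorem. You are in fact more careful than the paper about the integrality of $(g-1)/2$; the paper simply writes that $L$ is $\frac{(L\cdot L)}{4}$-very ample and hence $kL$ is $\frac{k(g-1)}{2}$-very ample without taking floors, whereas you correctly flag the even-$g$ case and propose the natural fix of applying Knutsen's criterion directly to $kL$, which works since $(kL)^2 \geq 4(3m-1)$ and the remaining hypotheses are vacuous when $\mathrm{Pic}(S)=\mathbb{Z}L$.
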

\begin{proof}
Suppose $h \geq \frac{k(6k-1)(g-1)+4}{6}$. Let $n$ be the number of 
nodes of irreducible, nodal curves in the linear system $|kL|$ and let $p_a$ be the arithmetic
genus of the curves in the system. Then
\begin{align*}
n &\leq p_a-\frac{k(6k-1)(g-1)+4}{6} \\
&=\frac{6+6k^2(g-1)-k(6k-1)(g-1)-4}{6} \\
&=\frac{2+k(g-1)}{6}.
\end{align*}
On the other hand $L$ is $\frac{(L \cdot L)}{4}$ very ample so $kL$ is $\frac{k(L \cdot L)}{4}=\frac{k(g-1)}{2}$ very
ample. Hence $L$ is at least $3n-1$ very ample. 
\end{proof}
\begin{rem}
A stronger inequality is proven in \cite[Cor.\ 2.6]{keilin} in which irreducibility holds when the number of nodes is of the order $k^2(L \cdot L)$,
rather than $k(L \cdot L)$. The proof uses more sophisticated methods.
\end{rem}
\begin{cor}
 The moduli space $M_{h,k}^g$ of tuples $(C,S,L)$ with $(S,L)$ a primitively polarised K3 surface of genus $g$ and $C \in |kL|$ an
irreducible, nodal curve of genus $h$ is irreducible for $h \geq \frac{k(6k-1)(g-1)+4}{6}$.
\end{cor}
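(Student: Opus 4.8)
The plan is to deduce the statement from the irreducibility of the generic fibre supplied by Corollary \ref{nonrat-cor}, via the structure morphism to the parameter space of polarised K3 surfaces. Let $\pi\colon M_{h,k}^g\to W_g$ be the projection, where $W_g$ is the smooth irreducible parameter variety constructed in Section \ref{sect:const}; by construction its fibre over $(S,L)$ is the Severi variety $V_h(S,kL)$. Exactly as in the genus-zero case recalled at the start of this chapter, the relative deformation theory underlying \cite{flam} shows that $\pi$ is smooth; in particular $M_{h,k}^g$ is a smooth variety, so its irreducible components coincide with its connected components, and are therefore pairwise disjoint and open in $M_{h,k}^g$.

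Next I would assemble the two facts needed about a \emph{very general} polarised K3 surface, i.e.\ a point $(S,L)\in W_g$ lying outside the countable union of Noether--Lefschetz loci; this complement is still dense and meets every nonempty open subset of $W_g$. For such $(S,L)$ one has $\mathrm{Pic}(S)\simeq\mathbb{Z}$, so Corollary \ref{nonrat-cor} applies and $V_h(S,kL)$ is irreducible or empty. Moreover, for every $h$ in the stated range -- indeed for all $0\le h\le 1+k^2(g-1)$ -- the Severi variety $V_h(S,kL)$ is in fact \emph{nonempty}, since a general primitively polarised K3 surface carries irreducible nodal curves of every geometric genus in $|kL|$ (cf.\ \cite{chen}, \cite{tannen}). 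In particular $M_{h,k}^g\neq\emptyset$ and $\pi$ is dominant.

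Now I would run the standard component argument. Let $I_1,\dots,I_r$ be the irreducible components of $M_{h,k}^g$. Each $I_j$ is open, and $\pi$, being smooth, is flat and hence open, so each $\pi(I_j)$ is a nonempty -- hence dense -- open subset of the irreducible variety $W_g$. Thus $\bigcap_j\pi(I_j)$ is a dense open subset of $W_g$, and since the very general locus is dense we may choose $(S,L)\in\bigcap_j\pi(I_j)$ with $\mathrm{Pic}(S)\simeq\mathbb{Z}$. For this $(S,L)$ the fibre $\pi^{-1}(S,L)=V_h(S,kL)$ is irreducible and nonempty, and for each $j$ the set $I_j\cap\pi^{-1}(S,L)$ is a nonempty open subset of it. Since two nonempty open subsets of an irreducible space always meet, $I_j\cap I_{j'}\neq\emptyset$ for all $j,j'$; as distinct components of a smooth scheme are disjoint, this forces $r=1$, so $M_{h,k}^g$ is irreducible.

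There is no genuine obstacle internal to this argument: it is a formal consequence of Corollary \ref{nonrat-cor}. The only steps carrying content are the two inputs cited in the second paragraph -- the smoothness of the relative Severi variety from \cite{flam}, and, more essential for the statement not being vacuous, the nonemptiness of $V_h(S,kL)$ on a very general $(S,L)$, which is what guarantees that $M_{h,k}^g$ is nonempty and that $\pi$ is dominant. If one were content with the weaker ``irreducible or empty'' conclusion matching Corollary \ref{nonrat-cor}, this last input could be omitted.
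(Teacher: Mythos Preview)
Your argument is correct and follows the same route as the paper: the projection $M_{h,k}^g\to W_g$ is smooth by \cite{flam}, and the generic fibre is irreducible by Corollary~\ref{nonrat-cor}, which forces the total space to be irreducible. You spell out the component argument and the nonemptiness issue in more detail than the paper does, but the substance is identical.
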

\begin{proof}
 We have a flat map $M_{h,k}^g \to W_g$ where $W_g$ is the moduli space of embedded, primitively polarised K3 surfaces. In fact, this map is even smooth, by the argument
 of \cite[Prop. 4.8]{flam}. By Corollary \ref{nonrat-cor} the generic 
fibre is irreducible so the result follows.
\end{proof}

This result is particularly interesting when $k=1$, i.e.\ we are looking at curves in a primitive class. 
In this case we have established irreducibility of $M_{h,1}^g$ in the range $\frac{5g-1}{6} \leq h \leq g$. 
So we have proven the result for top $\frac{1}{6}$ of the allowed values
for the geometric genus of the curves.

\bibliography{biblio}

\providecommand{\bysame}{\leavevmode\hbox to3em{\hrulefill}\thinspace}
\providecommand{\MR}{\relax\ifhmode\unskip\space\fi MR }
% \MRhref is called by the amsart/book/proc definition of \MR.
\providecommand{\MRhref}[2]{%
  \href{http://www.ams.org/mathscinet-getitem?mr=#1}{#2}
}
\providecommand{\href}[2]{#2}
\begin{thebibliography}{FKPS08}

\bibitem[AK03]{arakol}
C.~Araujo and J.~Koll\'{a}r, \emph{Rational curves on varieties}, Higher
  Dimensional Varieties and Rational Points (Budapest 2001), vol.~12,
  Springer-Verlag, Berlin, 2003.

\bibitem[And96]{andre}
Y.~Andr\'{e}, \emph{{On the Shafarevich and Tate conjectures for
  hyperk\"{a}hler varieties}}, Math. Ann \textbf{305} (1996), 205--248.

\bibitem[AV02]{vist-ab}
D.~Abramovich and A.~Vistoli, \emph{Compactifying the space of stable maps}, J.
  Amer. Math. Soc \textbf{15} (2002), 27--75.

\bibitem[Bea97]{beau-97}
Arnaud Beauville, \emph{Counting rational curves on k3 surfaces}, 1997.

\bibitem[BHT]{bogomolov}
F.~Bogomolov, B.~Hassett, and Y.~Tschinkel, \emph{{Constructing rational curves
  on K3 surfaces}}, arXiv:0907.3527v1.

\bibitem[CC99]{chian}
L.~Chiantini and C.~Ciliberto, \emph{{On the Severi varieties of surfaces in
  $\mathbb{P}^3$}}, J. Algebraic Geom \textbf{8} (1999), 67--83.

\bibitem[CD]{cilided}
C.~Ciliberto and T.~Dedieu, \emph{{On universal Severi varieties of low genus
  K3 surfaces}}, arXiv:1009.3360v1.

\bibitem[Che]{chenrat}
X.~Chen, \emph{{Self-rational maps of K3 surfaces}}, arXiv:1008.1619v1.

\bibitem[Che02]{chen}
\bysame, \emph{{A simple proof that rational curves on K3 are nodal}}, Math.
  Ann. \textbf{324} (2002), 71--104.

\bibitem[Ded09]{dedieu}
T.~Dedieu, \emph{{Severi varieties and self-rational maps of K3 surfaces}},
  Internat. J. Math. \textbf{20(12)} (2009), 1455--1477.

\bibitem[FKPS08]{flam}
F.~Flamini, A.~L. Knutsen, G.~Pacienza, and E.~Sernesi, \emph{{Nodal curves
  with general moduli on K3 surfaces}}, Comm. Algebra \textbf{36(11)} (2008),
  3955--3971.

\bibitem[Goe97]{goettsche}
L.~Goettsche, \emph{A conjectural generating function for numbers of curves on
  surfaces}, 1997, arXiv:alg-geom/9711012v1.

\bibitem[Gro65]{egaiv}
A.~Grothendieck, \emph{{\'{E}l\'{e}ments de g\'{e}om\'{e}trie alg\'{e}brique.
  IV. \'{E}tude locale des sch\'{e}mas et des morphismes de sch\'{e}mas. II.}},
  Inst. Hautes \'{E}tudes Sci. Publ. Math. \textbf{24} (1965), 231pp.

\bibitem[Har77]{har}
R.~Hartshorne, \emph{Algebraic geometry}, Springer, New York, 1977.

\bibitem[Har86]{harrissev}
J.~Harris, \emph{{On the Severi problem}}, Invent. Math. \textbf{84(3)} (1986),
  445--461.

\bibitem[Har10]{har-defo}
R.~Hartshorne, \emph{{Deformation Theory (Graduate Texts in Mathematics,
  257)}}, Springer, New York, 2010.

\bibitem[Kei03]{keilin}
T.~Keilen, \emph{{Irreducibility of equisingular families of curves}},
  Transactions of the American Mathematical Society \textbf{355(9)} (2003),
  3485--3512.

\bibitem[Knu01]{knut}
A.~L. Knutsen, \emph{{On kth-order embeddings of K3 surfaces and Enriques
  surfaces}}, Manuscripta Mathematica \textbf{104} (2001), 211--237.

\bibitem[Kol96]{kollar}
J.~Koll\'{a}r, \emph{{Rational Curves on Algebraic Varieties}},
  Springer-Verlag, Germany, 1996.

\bibitem[Leh99]{lehn}
M.~Lehn, \emph{{Chern classes of tautological sheaves on Hilbert schemes of
  points on surfaces}}, Invent. Math. \textbf{136} (1999), 157--207.

\bibitem[Liu02]{liu}
Q.~Liu, \emph{{Algebraic Geometry and Arithmetic Curves}}, Oxford University
  Press, United Kingdom, 2002.

\bibitem[LL]{liedtke}
J.~Li and C.~Liedtke, \emph{{Rational curves on K3 surface}},
  arXiv:1012.3777v1.

\bibitem[Mir89]{miranda}
R.~Miranda, \emph{{The basic theory of elliptic surfaces}}, ETS Editrice, Pisa,
  1989.

\bibitem[Ols04]{olsson}
M.~C. Olsson, \emph{{Semi-stable degenerations and period spaces for polarized
  K3 surfaces}}, Duke Math. J \textbf{125(1)} (2004), 121--203.

\bibitem[Ran89]{ran-maps}
Ziv Ran, \emph{Deformations of maps}, Algebraic Curves and Projective Geometry
  (Edoardo Ballico and Ciro Ciliberto, eds.), Lecture Notes in Mathematics,
  vol. 1389, Springer Berlin / Heidelberg, 1989, 10.1007/BFb0085936,
  pp.~246--253.

\bibitem[SS10]{schuett}
M.~Sch\"{u}tt and T.~Shioda, \emph{{Elliptic surfaces}}, Algebraic geometry in
  East Asia - Seoul 2008, Advanced Studies in Pure Mathematics \textbf{60}
  (2010), 51--160.

\bibitem[Tan80]{tannen2}
A.~Tannenbaum, \emph{{Families of curves with nodes}}, Compositio Mathematica
  \textbf{41(1)} (1980), 107--126.

\bibitem[Tan82]{tannen}
\bysame, \emph{{Families of curves with nodes on K3 surfaces}}, Math. Ann
  \textbf{260} (1982), 239--253.

\bibitem[Tes05]{testa}
D.~Testa, \emph{{The Severi problem for Rational Curves on del Pezzo
  surfaces}}, Ph.D. thesis, Massachusetts Institute of Technology, 2005.

\end{thebibliography}
\end{document}